\documentclass[12pt]{article}
 \usepackage{graphicx,amsmath,amsfonts,amssymb}
 \pagestyle{plain}
 \setlength{\topmargin}{-0.5in}
 \setlength{\oddsidemargin}{0.0in}
 \setlength{\evensidemargin}{0.0in}
 \setlength{\textwidth}{160mm}
 \setlength{\textheight}{240mm}

 \setlength{\parskip}{1.2ex}
\usepackage[dvips]{color}
\usepackage{ulem}
\usepackage{cases}

\definecolor{brown}{cmyk}{0,0.83,1,0.6}


 \newcommand{\vc}[1]{{\boldsymbol #1}}
 \newcommand{\vcn}[1]{{\bf #1}}
 
 \newcommand{\sr}[1]{{\mathcal #1}}
 \newcommand{\dd}[1]{\mathbb{#1}}

 \newcommand{\eqn}[1]{(\ref{eqn:#1})}
 \newcommand{\lem}[1]{Lemma~\ref{lem:#1}}
 \newcommand{\cor}[1]{Corollary~\ref{cor:#1}}
 \newcommand{\thr}[1]{Theorem~\ref{thr:#1}}
 
 \newcommand{\pro}[1]{Proposition~\ref{pro:#1}}

 \newcommand{\rem}[1]{Remark~\ref{rem:#1}}
 \newcommand{\fig}[1]{Figure~\ref{fig:#1}}
\newcommand{\app}[1]{Appendix~\ref{app:#1}}
\newcommand{\sectn}[1]{Section~\ref{sec:#1}}

\newcommand{\lemt}[1]{\ref{lem:#1}}

\newcommand{\appt}[1]{\ref{app:#1}}

 \newcommand{\ol}{\overline}
 
 \newcommand{\pend}{\hfill \thicklines \framebox(6.6,6.6)[l]{}}
 \newenvironment{proof}{\noindent {\sc  Proof.} \rm}{\pend}
 \newenvironment{proof*}[1]{\noindent {\sc  #1} \rm}{\pend}

 \newtheorem{theorem}{Theorem}[section]
 \newtheorem{lemma}{Lemma}[section]
 
 \newtheorem{proposition}{Proposition}[section]
 
 \newtheorem{remark}{Remark}[section]

 \newtheorem{corollary}{Corollary}[section]

 \newcommand{\setnewcounter} {
 \setcounter{subsection}{0}
 \setcounter{equation}{0}
 \setcounter{conjecture}{0}
 \setcounter{assumption}{0}
 \setcounter{question}{0}
 \setcounter{definition}{0}
 \setcounter{theorem}{0}
 \setcounter{corollary}{0}
 \setcounter{lemma}{0}
 \setcounter{proposition}{0}
 \setcounter{remark}{0}
}

\newenvironment{mylist}[1]{\begin{list}{}
{\setlength{\itemindent}{#1mm}}
{\setlength{\itemsep}{0ex plus 0.2ex}}
{\setlength{\parsep}{0.5ex plus 0.2ex}}
{\setlength{\labelwidth}{10mm}}
}{\end{list}}

\newcommand{\dequal}{\stackrel {\rm d}{=}}

 \title{\bf Join the shortest queue among $k$ parallel queues: tail asymptotics of its stationary distribution}
\author{Masahiro Kobayashi$^{a}$, Yutaka Sakuma$^{b}$ and Masakiyo Miyazawa$^{a}$
\\ {\small $^{a}$Department of Information Sciences, Tokyo University of Science}
\\  {\small $^{b}$Hiroshima National College of Maritime Technology}
}
\date{Revised, February 7, 2013}

\begin{document}
\maketitle

\begin{abstract}
We are concerned with an $M/M$-type join the shortest queue ($M/M$-JSQ for short) with $k$ parallel queues for an arbitrary positive integer $k$,  where the servers may be heterogeneous. We are interested in the tail asymptotic of the stationary distribution of this queueing model, provided the system is stable. We prove that this asymptotic for the minimum queue length is exactly geometric, and its decay rate is the $k$-th power of the traffic intensity of the corresponding $k$ server queues with a single waiting line. For this, we use two formulations, a quasi-birth-and-death (QBD for short) process and a reflecting random walk on the boundary of the $k+1$-dimensional orthant. The QBD process is typically used in the literature for studying the JSQ with $2$ parallel queues, but the random walk also plays a key roll in our arguments, which enables us to use the existing results on tail asymptotics for the QBD process.
\end{abstract}

{\bf Keywords: }
Join the shortest queue,  heterogeneous servers, stationary distribution, \\
\indent exactly geometric asymptotics, quasi-birth-and-death process, reflecting random walk.

\section{Introduction}
\label{sec:introduction}
We consider a parallel queueing model in which customers join the shortest queue.
If there are more than one queues whose lengths are shortest, then we assume tie break with equal probabilities.
We denote the number of queues by $k$ $(\ge2)$.
It is assumed that customers arrive according to a Poisson process,  and  each queue has a single server, and its service times are {\it i.i.d} with an exponential distribution.
Here, those servers may have different mean service times, that is, they may be heterogeneous.
We refer to this queueing model as an $M/M$-type join the shortest queue  ($M/M$-JSQ for short).

We are interested in the stationary distribution of the queue length for the $M/M$-JSQ.
However, its analytic derivation is known to be hard, and theoretical interests have been directed to the tail asymptotic of the stationary distribution.
For $k=2$,
this problem has been well studied.
Kingman studied the $M/M$-JSQ with $2$ parallel queues having homogeneous servers.
He proved that  the stationary distribution of the minimum queue length  has an exactly geometric asymptotics and its decay rate is equal to the square of traffic intensity of the corresponding $M/M/2$ queue with a single waiting line, where exactly geometric asymptotics means that the tail probability is asymptotically proportional to a geometric function.
Many researchers obtained similar geometric asymptotics for more general models with two parallel queues. The case of heterogeneous servers is studied in \cite{TFM 2001}.
Foley and McDonald \cite{FM 2001} considered the generalized shortest queue which has a Poisson stream dedicated to each queue in addition to a Poisson stream which chooses the shortest queue. They obtained the stability condition and exactly geometric asymptotic under some extra conditions.
This model was further studied in \cite{HMZ 2007,Miyazawa two side}.
In particular, Miyazawa \cite{Miyazawa two side} described the generalized shortest queue by a two sided double quasi-birth-and-death (QBD) process, and derived the tail decay rates without any extra condition.
 Sakuma \cite{Sakuma 2010b} considered two parallel queues with a common phase type service time distribution and a Markov modulated arrivals, and derived exactly geometric asymptotic under a certain condition. 

All those studies for JSQ assume two parallel queues. We consider the case where there are more than two parallel queues, that is, $k$ queues with $k \ge 3$. 
Puhalskii and Vladimirov \cite{Puhalskii 2007} studied the tail asymptotics for much more general $k$ parallel queues in which there are multiple classes of customers who can only choose the shortest queue among queues assigned to them.  They derived the large deviations principle for this generalized model. However,  they do not provide any explicit asymptotics even for $k=2$. Sakuma \cite{Sakuma 2010a} also studied $k$ parallel queues with JSQ discipline, but this model is allowed to have jockeying when the maximum difference among queues is greater than a given threshold level. Due to this jockeying assumption, the problem is reduced to one dimensional queue, and the standard technique can be applied to get tail asymptotics.

For the $M/M$-JSQ with $k$ parallel queues, it is easy to guess that the tail decay rate of the stationary distribution for the minimum queue length is the $k$-th power of the traffic intensity of the corresponding $k$ server queue with a single waiting line because all the queues should be balanced. However, there has been no proof 
for $k \ge 3$ as far as we know. This may be because there is no satisfactory method for tail asymptotics on the stationary distribution for more than two correlated queues.

The aim of this paper is to prove this conjecture. This will be done by obtaining exactly geometric asymptotic. For this, we employ two formulations. We first consider the exact geometric asymptotics using a discrete time QBD process, which has two components, level and background. The level is one dimensional, and represents a characteristics of interest for tail asymptotics. The background state has all the information for the process to be Markov. We define the level by the minimum queue length for the $M/M$-JSQ with $k$ parallel queues while background state is a set of differences between the queues lengths and their minimum. Then, we have the QBD process, and the stationary distribution is represented by the so called matrix geometric solution. 
This solution can be used to derive the exactly geometric asymptotic. 
However, we need to verify certain conditions for this derivation which are not always easy to verify.
In particular, one of them is involved with the tail asymptotic of the marginal distribution at level zero, which is generally unknown.

To overcome this difficulty, we use another formulation. We take the same state space as that of the QBD process. Thus, each state is a $k+1$ dimensional vector at least one of whose entries vanish. 
We consider this process as a reflecting random walk on the boundary of the $k+1$ dimensional nonnegative integer orthant. This boundary is composed of $2(2^{k} - 1)$ faces depending on which entries vanish.  
We refer to this process as a reflecting random walk for the shortest queue. This random walk provides us a different tool for solving the tail asymptotic problem. For this, we use moment generating functions for describing the stationary equation instead of the matrix geometric solution. Of course, it is very hard to analytically derive the moment generating function of the multidimensional stationary distribution. Instead of doing so, we only consider its convergence domain, similarly to our recent papers \cite{KM 2010, Miyazawa 2010}.  We can not find the whole domain, but can get sufficient information to apply the tail asymptotic result of the QBD process.

The rest of this paper is organized as follows. In \sectn{JSQ}, we formally introduce a Markov chain for the shortest queue and formulate it in two ways, the QBD process and reflecting random walk. We then present a main result, the exact geometric asymptotic for the $M/M$-JSQ with $k$ parallel queues (\thr{JSQ main result}). As a corollary of this result, we also derive a rough asymptotic for the marginal distribution of the minimum queue length (\cor{JSQ rough marginal}). In \sectn{Geometric}, we prove the main result using one proposition and five lemmas. The first two lemmas are on the QBD process, and proved in the appendix. The last lemma plays a key roll for our proof. It  is proved in \sectn{Proof of Domain}, using further lemmas.
We give some concluding remarks in  \sectn{concluding remark}.

\section{Modeling and exactly geometric asymptotics}
\setnewcounter
\label{sec:JSQ}

	We consider a queueing model with $k$ parallel single server queues, where each waiting line has infinite capacity, and we index those $k$ queues as $1,2, \ldots, k$. Customers arrive according to a Poisson process with rate $\lambda$ and join the shortest queue, where ties are broken with equal probabilities when there are more than one shortest queues. At the $i$-th queue,  the customers are served according to first-come first-served discipline, and their service times  are independent and exponentially distributed with mean  $\mu_{i}^{-1}$. Thus, the severs may not be homogeneous.
	This queueing model is referred to as an $M/M$-type join the shortest queue ($M/M$-JSQ) with $k$ parallel queues (see also \fig{M/M/c}).
\begin{figure}[htbp]
 	\centering
	\includegraphics[height=0.17\textheight]{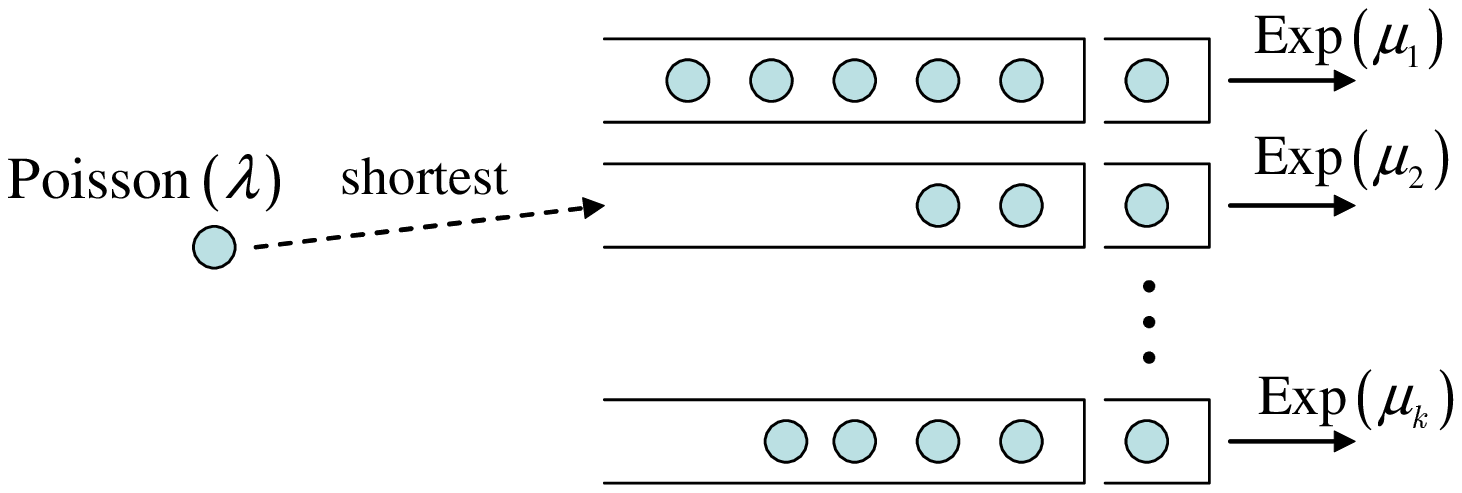}
	\caption{$M/M$-JSQ with $k$ parallel queues}
	\label{fig:M/M/c}
\end{figure}

 We denote the index set of queues by $J$, that is,
\begin{eqnarray*}
  J = \{ 1,2, \ldots, k \}.
\end{eqnarray*}
	For each $t$ and $i \in J$, let $L_{i}(t)$ be the number of customers in queue $i$ including a customer being served, and let
\begin{eqnarray*}
  \vc{L}(t) = (L_{1}(t), L_{2}(t), \ldots, L_{k}(t)).
\end{eqnarray*}
  It is easy to see that $\{\vc{L}(t) ; t \in \dd{R}_{+}\}$ is a continuous time Markov chain with state space $\dd{Z}_{+}^{k}$, where $\dd{R}_{+}$ and $\dd{Z}_{+}$ are the sets of all nonnegative real numbers and integers, respectively. We denote the traffic intensity of this queueing model by 
\begin{eqnarray*}
  \rho = \frac {\lambda}{\sum_{i=1}^{k} \mu_{i}},
\end{eqnarray*}
 and assume that
\begin{eqnarray}
\label{eqn:Stability condition}
	\rho < 1,
\end{eqnarray}
 which is known to be the stability condition (see, e.g., \cite{FM 2001}).
	Since the total transition rate from each state is bounded  by  $\lambda + \sum_{i=1}^{k} \mu_{i}$,
 we can construct a discrete time Markov chain which has the same stationary distribution as that of $\{\vc{L}(t)\}$ by uniformization.
 We normalize $\lambda + \sum_{i=1}^{k} \mu_{i}$ without loss of generality as
\begin{eqnarray} 
\label{eqn:Uniform}
	 \lambda + \sum_{i=1}^{k} \mu_{i} = 1.
\end{eqnarray}
	We denote this discrete time Markov chain by $\{\vc{L}_{\ell} ; \ell = 0,1,\ldots\}$, where
\begin{eqnarray}
\label{eqn:Discrete time process}
	\vc{L}_{\ell} = (L_{\ell 1}, L_{\ell 2}, \ldots, L_{\ell k}).
\end{eqnarray}
In this paper, we refer to this process as an original queue length process.

	In the rest of this paper, we consider this discrete time process.  The state transitions of $\vc{L}_{\ell}$ are a bit complicated because they depend on how $L_{\ell i}$'s are ordered. Thus, we describe it in a slightly different way. Let
\begin{eqnarray*}
	M_{\ell} = \min_{i \in J} L_{\ell i}, \quad
	\vc{Y}_{\ell} = (Y_{\ell 1}, Y_{\ell 2}, \ldots, Y_{\ell k}), \qquad \ell=0, 1, \ldots,
\end{eqnarray*}
  where $Y_{\ell i} = L_{\ell i} - M_{\ell}$ for $i \in J$. Let $\{\vc{Z}_{\ell}; \ell=0, 1, \ldots\}$ be a pair of $M_{\ell}$ and $\vc{Y}_{\ell}$, that is,
\begin{eqnarray*}
  \vc{Z}_{\ell} = (M_{\ell}, \vc{Y}_{\ell}),
\end{eqnarray*}
  which is just another expression for $\{\vc{L}_{\ell}\}$. Obviously,  $\{\vc{Z}_{\ell};\ell=0,1,2,\ldots\}$ is a discrete time Markov chain which takes values in $\mathbb{Z}_{+}^{k+1}$.
  
  The process $\{\vc{Z}_{\ell}\}$ is convenient because of two reasons. First, it can be considered as a quasi-birth-and-death process, QBD for short, if we view $M_{\ell}$ as level and $\vc{Y}_{\ell}$ as background state. Secondly, it can be considered as a reflecting random walk. This simplifies our arguments while keeping accurate mathematical expressions in the boundary faces of $k+1$-dimensional nonnegative integer orthant. 
  
  Let us describe the process $\{\vc{Z}_{\ell}\}$ as a $k+1$-dimensional reflecting random walk. We first partition its state space. 
Let ${\sr K} \equiv \{U \subset J; U \neq \emptyset\}$. For $U \in {\sr K}$, we define the following boundary faces.
\begin{eqnarray*}
{\sr S}_{+U} &=& \{\vc{u} = (u_{0},u_{1},u_{2},\ldots u_{k}) \in \mathbb{Z}_{+}^{k+1}; u_{0} \ge 1, u_{i} \ge 1, i \in J \setminus U, u_{j} = 0, j \in U \},\\
{\sr S}_{0U} &=& \{\vc{u} = (u_{0},u_{1},u_{2},\ldots u_{k}) \in \mathbb{Z}_{+}^{k+1}; u_{0} = 0, u_{i} \ge 1, i \in J \setminus U, u_{j} = 0, j \in U\}.
\end{eqnarray*}
Denote the boundary of $\mathbb{Z}_{+}^{k+1}$ by $\partial \mathbb{Z}_{+}^{k+1}$, that is, 
\begin{eqnarray*}
&&\partial \mathbb{Z}_{+}^{k+1} = \{\vc{u} = (u_{0},u_{1},\ldots,u_{k}) \in \mathbb{Z}_{+}^{k+1}; \exists i \in \{0\} \cup J, u_{i} = 0\}.
\end{eqnarray*}
Then, $\partial \mathbb{Z}^{k+1} \supseteq \cup_{U \in {\sr K}} (S_{+U} \cup S_{0U})$, on which $\{\vc{Z}_{\ell}\}$ stays.
Thus, the state space of $\vc{Z}_{\ell}$ is given by 
\begin{eqnarray*}
\displaystyle{ {\sr S} = \cup_{U \in {\sr K}} ({\sr S}_{+U} \cup {\sr S}_{0U})}.
\end{eqnarray*}
Note that $\vc{Z}_{\ell} \in {\sr S}_{+U}$ if $U$ is the set of indices of the shortest  queues and $M_{\ell} \ge 1$. Similarly, $\vc{Z}_{\ell} \in {\sr S}_{0U}$ implies that $U$ is the set of indices of the shortest  queues and $M_{\ell} =0$.

  Let us consider the distribution of increment $\vc{Z}_{\ell+1} - \vc{Z}_{\ell}$. It only depends on the boundary face to which $\vc{Z}_{\ell}$ belongs.   
To give its distribution, we use the following notations. For $i \in J$, let $\vcn{e}_{i}$ be the $k$-dimensional row vector whose $i$-th entry is unit and the other entries vanish (e.g., $\vcn{e}_{1} = (1,0,\ldots,0)$), and let $\vc{1}$ be the $k$-dimensional row vector whose all entries are units, i.e., $\vc{1} = (1,1, \ldots, 1)$. We denote the number of elements of set $U$ by $|U|$.
For each $U \in {\sr K}$, we define the random vector $\vc{X}^{(+U)}$ taking value in $(j, \vc{v}) \in \{0,-1,1\}^{k+1}$ as follows, for $|U| = 1$,
\begin{eqnarray}
\label{eqn:Dist of vc{X}(U)1}
\dd{P}(\vc{X}^{(+U)} = (j,\vc{v}) ) = \left \{
\begin{array}{ll}
\lambda,                        & (j, \vc{v}) = (1, - \vc{1} + \vcn{e}_{i}) , i \in U, \\ 
\mu_{i},                            & (j, \vc{v}) = (0, - \vcn{e}_{i}), i \in J \setminus U\\
                            & \mbox{or } (j, \vc{v}) = (-1, \vc{1} - \vcn{e}_{i}), i \in U,\\
0,                              & \mbox{otherwise},
\end{array}
\right.
\end{eqnarray}
and for $|U| \ge 2$,
\begin{eqnarray}
\label{eqn:Dist of vc{X}(U)2}
\dd{P}(\vc{X}^{(+U)} = (j,\vc{v}) ) = \left \{
\begin{array}{ll}
\frac{1}{|U|} \lambda, & (j,\vc{v}) = (0, \vcn{e}_{i}), i \in U,\\
\mu_{i},               & (j, \vc{v}) = (0, - \vcn{e}_{i}), i \in J \setminus U\\
		       & \mbox{or }(j, \vc{v}) = (-1, \vc{1} - \vcn{e}_{i}), i \in U,\\
0,                     \quad & \mbox{otherwise}.
\end{array}
\right.
\end{eqnarray}
Similarly, let $\vc{X}^{(0U)}$ be the random vector such that,  for $|U| = 1$ 
\begin{eqnarray}
\label{eqn:Dist of vc{X}(0,U)1}
\dd{P}(\vc{X}^{(0U)} = (j,\vc{v}) ) = \left \{
\begin{array}{ll}
\lambda,  & (j, \vc{v}) = (1, - \vc{1} + \vcn{e}_{i}), i \in U,\\ 
\mu_{i},  & (j, \vc{v}) = (0, - \vcn{e}_{i}), i \in J \setminus U\\
          & \mbox{or }(j, \vc{v}) = (0, \vc{0}),i \in U,\\
0,        \quad & \mbox{otherwise},
\end{array}
\right.
\end{eqnarray}
and for $|U| \ge 2$,
\begin{eqnarray}
\label{eqn:Dist of vc{X}(0,U)2}
\dd{P}(\vc{X}^{(0U)} = (j,\vc{v}) ) = \left \{
\begin{array}{ll}
\frac{1}{|U|} \lambda, & (j,\vc{v}) = (0, \vcn{e}_{i}), i \in U,\\
\mu_{i},               & (j, \vc{v}) = (0, - \vcn{e}_{i}), i \in J \setminus U\\
                       & \mbox{or }(j, \vc{v}) = (0, \vc{0}),i \in U,\\
0,                     & \mbox{otherwise},
\end{array}
\right.
\end{eqnarray}
where $\vc{0}$ is a null vector.
  Note that $\vc{X}^{(0U)}$ represents the increment when the queues with indices in $U$ are empty.
  Then, $\{\vc{Z}_{\ell}\}$ can be obtained as
\begin{eqnarray}
\label{eqn:random walk 1}
 \vc{Z}_{\ell+1} = \vc{Z}_{\ell} + \!\! \sum_{U \in {\sr K}} \!\! \left( \vc{X}^{(+U)}_{\ell} 1(\vc{Z}_{\ell} \in {\sr S}_{+U})
                                                   + \vc{X}^{(0U)}_{\ell} 1(\vc{Z}_{\ell} \in {\sr S}_{0U}) \right), 
\end{eqnarray}
  where $\vc{X}^{(+U)}_{\ell}$ and $\vc{X}^{(0U)}_{\ell}$ are independent copies of $\vc{X}^{(+U)}$ and $\vc{X}^{(0U)}$, respectively, and $1(\cdot)$ is the indicator function. This $\{\vc{Z}_{\ell}\}$ is referred to as a reflecting random walk for the JSQ.
  
  By the stability condition \eqn{Stability condition}, $\{\vc{Z}_{\ell}\}$ has a stationary distribution. Denote a random vector subject to this distribution by $\vc{Z} \equiv (M,\vc{Y})$. Then, \eqn{random walk 1} yields
\begin{eqnarray}
\label{eqn:stationary 1}
 \vc{Z} \dequal \vc{Z} + \sum_{U \in {\sr K}} \left( \vc{X}^{(+U)} 1(\vc{Z} \in {\sr S}_{+U})
                                                   + \vc{X}^{(0U)} 1(\vc{Z} \in {\sr S}_{0U}) \right),
\end{eqnarray}
 where $\dequal$ stands for equality in distribution, and $\vc{X}^{(+U)}$ and $\vc{X}^{(0U)}$ are assumed to be independent of $\vc{Z}$.
The stationary equation \eqn{stationary 1} plays a key roll in our arguments. 
\if0
 It is notable that, for any $U \in {\sr K}$, the marginal distributions of $\vc{X}^{(+U)}$ and $\vc{X}^{(0U)}$ with respect to non-zero entries are unchanged as long as $|U|$ is fixed. Thus, we have the following fact.

\begin{lemma} {\rm
\label{lem:symmetric 1}
Let $\vc{L} \equiv (L_{1},L_{2},\ldots,L_{k})$ be a random vector subject to the stationary distribution of $\vc{L}_{\ell}$.
  For a permutation $\sigma$ for the entries of a $k$ dimensional vector, we have
\begin{eqnarray}
\label{eqn:symmetric 3}
 \sigma(\vc{L}) &\dequal& \vc{L}, \\
\label{eqn:symmetric 2}
  (M, \sigma(\vc{Y})) &\dequal& (M, \vc{Y}).
\end{eqnarray}
}\end{lemma}
\fi

  Let
\begin{eqnarray*}
   \sr{H} = \{ \vc{h} = (h_{1},h_{2},\ldots, h_{k}) \in \mathbb{Z}_{+}^{k} ;  \exists i \in J, h_{i} = 0\}.
\end{eqnarray*}
Obviously, ${\sr H}$ is a state space for $\vc{Y}$ and ${\sr S} = \mathbb{Z}_{+} \times {\sr H}$.
We are ready to present our main result of this paper which will be proved in \sectn{Geometric} using results in \sectn{Proof of Domain} and Appendices.
 
\begin{theorem} {\rm
\label{thr:JSQ main result}
	For the $M/M$-JSQ with $k$ parallel queues satisfying the stability condition \eqn{Stability condition}, we have for each $\vc{h} \in {\sr H}$,
\begin{eqnarray}
\label{eqn:exact asymptotic JSQ}
  \lim_{n \to \infty} \rho^{-kn} \dd{P}(M = n, \vc{Y} = \vc{h}) = c_{\vc{h}},
\end{eqnarray}
 where $c_{\vc{h}}$ is a positive constant.
}
\end{theorem}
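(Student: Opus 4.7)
The plan is to treat $\{\vc{Z}_{\ell}\}$ as a QBD with level $M_{\ell}$ and infinite background $\vc{Y}_{\ell} \in \sr{H}$, apply a general tail-asymptotic theorem for such QBDs, and verify its hypotheses by means of the reflecting random walk formulation and moment generating functions. The QBD theorem would deliver an exactly geometric limit
\[
 \lim_{n\to\infty} \alpha^{-n} \dd{P}(M = n, \vc{Y} = \vc{h}) = c_{\vc{h}} > 0
\]
once (a) a decay rate $\alpha$ and a positive $\alpha$-invariant right function $\vc{g}$ for the interior transition operator are identified, and (b) the level-$0$ stationary mass has finite $\vc{g}$-moment, $\sum_{\vc{h} \in \sr{H}} \dd{P}(M = 0, \vc{Y} = \vc{h})\, \vc{g}(\vc{h}) < \infty$. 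The target is $\alpha = \rho^{k}$. The first two lemmas, of pure QBD character and relegated to the appendix, would codify this theorem for the infinite-background setting together with the link between $\alpha$ and the spectral radius of the rate matrix $R$. Reading off the block operators $A_{-1}, A_{0}, A_{1}$ on $\sr{H}$ directly from \eqn{Dist of vc{X}(U)1}--\eqn{Dist of vc{X}(U)2} reduces the problem to conditions on the moment generating function.

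Identification of $\alpha = \rho^{k}$ would come from a spectral analysis of the interior operator. Intuitively, deep in the state space the $k$ queues pool their service capacity and behave like an $M/M/k$-type system of total rate $\sum_{i} \mu_{i}$, so that lifting the minimum by one requires all $k$ queues to rise by one, producing the factor $\rho^{k}$ per unit of $M$. The corresponding positive $\vc{g}$ is built from the relative-position structure implicit in the QBD operator and is expected to exhibit approximately geometric decay in each $h_{i}$.

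The crux is condition (b), which I would establish through the random walk viewpoint. Introduce the joint moment generating function
\[
 \Phi(z, \vc{\theta}) = \dd{E}\bigl[z^{M} e^{\vc{\theta}\cdot \vc{Y}}\bigr]
\]
together with partial versions indexed by the boundary faces $\sr{S}_{+U}, \sr{S}_{0U}$. Taking expectations in the stationary equation \eqn{stationary 1} yields a single linear functional identity relating $\Phi$ to the $2(2^{k}-1)$ boundary MGFs. The identity is underdetermined, so an explicit solution is out of reach; however only the \emph{convergence domain} of $\Phi$ is needed. The key lemma of the paper asserts precisely that $\Phi(\rho^{-k}, \vc{\theta}) < \infty$ in a neighborhood of $\vc{\theta} = \vc{0}$, which is equivalent to the $\vc{g}$-moment condition. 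I would prove this by bootstrapping: start from the stability region $z < 1$ guaranteed by $\rho < 1$ and use the functional identity together with the symmetry of the JSQ routing to propagate finiteness outward in $z$ up to the natural boundary $\rho^{-k}$.

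The main obstacle is this bootstrap. Because the $2(2^{k}-1)$ boundary MGFs are unknown, each inductive step risks collapsing into an uninformative inequality of the form ``finite $\geq$ finite $-$ unknown''; exploiting the combinatorial structure of \eqn{Dist of vc{X}(U)1}--\eqn{Dist of vc{X}(0,U)2} and the symmetry introduced by the uniform tie-breaking in \eqn{Dist of vc{X}(U)2} is essential to keep the induction alive and to cancel or majorize the boundary terms along each face. Once the MGF lemma is in hand, the QBD asymptotic theorem yields \eqn{exact asymptotic JSQ}, with positivity of $c_{\vc{h}}$ following from the positivity of $\vc{g}$ and irreducibility of the background chain.
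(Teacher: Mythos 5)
Your overall architecture matches the paper's exactly: represent $\{\vc{Z}_\ell\}$ as a QBD with level $M$ and countable background $\vc{Y}$, invoke a known geometric-decay theorem for such QBD processes, exhibit the decay rate $\rho^{-k}$ together with an explicit right-invariant vector $y_{\vc{h}}=\rho^{-\vc{h}\vc{1}}$ and a left eigenvector obtained from positive recurrence of the $\Delta_{\vc{y}}$-conjugated kernel, and then verify the delicate level-zero moment condition $\vc{\pi}_0\vc{y}<\infty$ by analyzing the convergence domain of the reflecting random walk's moment generating function through a bootstrapped stationary inequality. That is precisely the paper's chain of lemmas.

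However, your description of the crucial MGF condition contains an error that would derail the bootstrap if carried out as stated. You assert that the key lemma is ``$\Phi(\rho^{-k},\vc{\theta})<\infty$ in a neighborhood of $\vc{\theta}=\vc{0}$,'' but this is false and has to be: $\Phi(\rho^{-k},\vc{0})=\dd{E}[\rho^{-kM}]$ diverges precisely because $\dd{P}(M=n)$ decays at rate $\rho^{k}$. What is actually needed is finiteness with the level variable held at $z=1$ while $\vc{\theta}$ is pushed away from the origin: since $y_{\vc{h}}=e^{(\log\rho^{-1})\,\vc{h}\vc{1}}$, the target is $\varphi(0,\,\log\rho^{-1}\vc{1})<\infty$. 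Consequently a one-dimensional propagation ``outward in $z$ starting from the stability region $z<1$'' is aimless -- for $z<1$, $\dd{E}[z^{M}]\le 1$ is trivially finite and says nothing about the $\vc{Y}$-tail. The paper's argument is an alternating two-dimensional iteration between the $\eta_0=\log z$ and $\eta_1$ directions, launched from a Lyapunov-type exponential-tightness estimate that supplies a strictly positive initial $\eta_1$, and driven on one leg by the structural inequality $\eta_1<\tfrac{1}{k-1}\eta_0$ and on the other by the stationary inequality with a carefully chosen family of ``known'' boundary faces. Finally, the tie-breaking symmetry you propose to exploit plays no real role: the servers are heterogeneous and the paper explicitly notes that the tie-break rule may be arbitrary, so leaning on symmetry to close the induction would be a false lead.
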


 For two parallel queues with homogeneous severs, this theorem was firstly obtained by Kingman \cite{Kingman 1961} by using analytic functions. It is also known for two parallel queues with heterogeneous servers (e.g, see \cite{TFM 2001}). Similar results were obtained for two parallel queues under more general setting (e.g., see \cite{FM 2001,HMZ 2007,Miyazawa two side,Sakuma 2010a,TFM 2001} and references in those papers). Many of them use the QBD processes and their limiting behaviors (e.g., see \cite{MZ 2004}).
\begin{corollary} {\rm
\label{cor:JSQ rough marginal}
Under the same assumptions of \thr{JSQ main result},
\begin{eqnarray}
\label{eqn:rough asymptotic JSQ}
  \lim_{n \to \infty} \frac 1n \log \dd{P}(M = n) = \log \rho^{k}.
\end{eqnarray}
}\end{corollary}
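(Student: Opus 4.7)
I would sandwich $\frac{1}{n}\log \dd{P}(M = n)$ between two quantities both tending to $\log \rho^{k}$. The $\liminf$ side is immediate from \thr{JSQ main result}: since $\vc{0} \in \sr{H}$ and $c_{\vc{0}} > 0$, the trivial bound $\dd{P}(M = n) \ge \dd{P}(M = n, \vc{Y} = \vc{0}) = c_{\vc{0}} \rho^{kn}(1 + o(1))$ yields
\begin{eqnarray*}
\liminf_{n \to \infty} \frac{1}{n} \log \dd{P}(M = n) \;\ge\; k \log \rho.
\end{eqnarray*}

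For the matching $\limsup$ bound, I would use the trivial inequality $k M \le \sum_{i=1}^{k} L_{i}$, which gives $\{M \ge n\} \subseteq \{\sum_{i} L_{i} \ge kn\}$ and hence
\begin{eqnarray*}
\dd{P}(M = n) \;\le\; \dd{P}\!\Bigl(\textstyle\sum_{i=1}^{k} L_{i} \ge kn\Bigr),
\end{eqnarray*}
so it suffices to prove $\limsup_{m \to \infty} \frac{1}{m} \log \dd{P}(\sum_{i} L_{i} \ge m) \le \log \rho$. The natural approach is a Foster--Lyapunov argument with test function $V(\vc{l}) = \alpha^{\sum_{i} l_{i}}$ for $\alpha \in (1, 1/\rho)$: on interior states where all $l_{i} \ge 1$, one computes using \eqn{Uniform} that the one-step drift of $V$ equals $V(\vc{l})\bigl(\lambda \alpha + (1-\lambda)/\alpha - 1\bigr)$, which is strictly negative exactly on $\alpha \in (1, 1/\rho)$. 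Combined with Markov's inequality this would yield $\dd{P}(\sum_{i} L_{i} \ge m) \le C_{\alpha} \alpha^{-m}$, and letting $\alpha \uparrow 1/\rho$ produces $\limsup_{n} \frac{1}{n} \log \dd{P}(M = n) \le k \log \rho$, closing the sandwich.

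The main obstacle is the Foster--Lyapunov step, because $V$ need not have negative drift on the boundary faces where some queues are empty: there the outflow rate drops from $1 - \lambda$ to $\mu_{+}(\vc{l}) = \sum_{i: l_{i} \ge 1} \mu_{i}$, and the drift condition degenerates to $\alpha \lambda < \mu_{+}(\vc{l})$, which can fail at unbalanced states such as $(m, 0, \ldots, 0)$. The fix is to exploit JSQ's balancing effect, either by modifying $V$ to penalize imbalance (e.g.\ $W(\vc{l}) = \alpha^{\sum_{i} l_{i}} \beta^{\max_{i} l_{i} - \min_{i} l_{i}}$ with $\beta > 1$ chosen so that $W$ has uniformly negative drift outside a compact set), or by bypassing Lyapunov altogether and reading the bound off the moment generating function and QBD analysis underlying the proof of \thr{JSQ main result}, which already controls the exponential rate $\rho^{k}$ of the QBD rate matrix and therefore of $\sum_{\vc{h}} \pi_{n}(\vc{h}) = \dd{P}(M = n)$.
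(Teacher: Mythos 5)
Your $\liminf$ direction is correct and matches what the paper intends, and the reduction $\dd{P}(M=n)\le\dd{P}\bigl(\sum_{i=1}^k L_i\ge kn\bigr)$ is valid; the gap is that neither of your two proposed fixes for the Lyapunov step holds up. For $W(\vc{l})=\alpha^{\sum_i l_i}\beta^{\max_i l_i-\min_i l_i}$ with $\beta>1$: at a state $\vc{l}=(m,0,\ldots,0)$ with $k\ge 3$, an arrival joins an empty queue and leaves $\max-\min$ equal to $m$ (ratio $\alpha$), a departure from queue $1$ gives ratio $(\alpha\beta)^{-1}$, so the relative drift is $\lambda(\alpha-1)+\mu_1\bigl((\alpha\beta)^{-1}-1\bigr)\to\lambda(\alpha-1)-\mu_1$ as $\beta\to\infty$; with $\alpha$ near $1/\rho$ (the regime you need) this equals $\sum_{j\ne 1}\mu_j-\lambda$, which is nonnegative whenever $\lambda\le\sum_{j\ne 1}\mu_j$ (e.g.\ homogeneous servers with $\lambda<(k-1)\mu$). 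So for such parameters no $\beta$ gives negative drift, and the penalty $\beta^{\max-\min}$ does not rescue the boundary states you flagged. The ``read it off the QBD'' alternative also has a real gap: Proposition~\ref{pro:sufficient conditions} gives only the componentwise limit $\alpha^n[\vc{\pi}_n]_{\vc{h}}\to c\,x_{\vc{h}}$ for each fixed $\vc{h}\in\sr{H}$, and since $\sr{H}$ is infinite this does not control $\dd{P}(M=n)=\sum_{\vc{h}}[\vc{\pi}_n]_{\vc{h}}$ without a uniform domination over $\vc{h}$ --- this is precisely why the paper states the corollary is not immediate from Theorem~\ref{thr:JSQ main result}.

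The paper's own proof avoids $\sum_i L_i$ entirely: it invokes Lemma~\ref{lem:domain D} with $\eta_1=0$ to get $\dd{E}(e^{\eta_0 M})\le\varphi(\eta_0,\eta_1\vc{1})<\infty$ for every $\eta_0<\log\rho^{-k}$, applies Markov's inequality directly to $M$ to obtain $\limsup_n\frac{1}{n}\log\dd{P}(M=n)\le-\eta_0$, and lets $\eta_0\uparrow\log\rho^{-k}$. If you want to keep your route through $\sum_i L_i$, the working Lyapunov is the one from the paper's proof of Lemma~\ref{lem:exponential tightness}, namely $f(\vc{u})=e^{\alpha\sqrt{\sum_i u_i^2}}$: the geometry of $\sqrt{\sum_i u_i^2}$ handles the unbalanced boundary states automatically, the drift is shown there to be negative for every $\alpha<\sqrt{k}\log\rho^{-1}$ (the lemma's statement records only ``some $\epsilon>0$'', but its proof carries this larger range), and this yields $\dd{E}(e^{\theta\sum_i L_i})<\infty$ for all $\theta<\log\rho^{-1}$, which is exactly the estimate your sandwich requires.
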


This corollary is not immediate from \thr{JSQ main result} because \eqn{exact asymptotic JSQ} only implies that the limit infimum of $\frac 1n \log \dd{P}(M = n)$ is lower bounded by $\log \rho^{k}$. We will prove it in \app{JSQ rough marginal}.

\section{Proof of \thr{JSQ main result}}
\setnewcounter
\label{sec:Geometric}
For the proof of \thr{JSQ main result}, we use two formulations, the QBD process and the reflecting random walk for the JSQ. 
We first discuss basic results on the exact asymptotic for the QBD process (see \pro{sufficient conditions}).
We next consider the convergence domain for the moment generating function of stationary distribution. For this, we prepare some lemmas. 
The last lemma among them has a key roll in our arguments, which will be proved in \sectn{Proof of Domain}.
Finally, we prove \thr{JSQ main result} in \sectn{Proof of Theorem}.

\subsection{QBD process and sufficient conditions for geometric tail decay}
\label{sec:QBD}

We first present the tail asymptotic result for the QBD process known in the literature \cite{HMZ 2007, MZ 2004}. 
For this, we use some matrices.
For $i=0,\pm 1$ and $\ell = 0,1,2, \ldots$, define infinite dimensional matrices $B_{0}$, $A_{i}$ as
\begin{eqnarray*}
\begin{array}{llll}
&&[B_{0}]_{\vc{h},\vc{h}'} = \dd{P}(\vc{Z}_{\ell+1} = (0, \vc{h}') | \vc{Z}_{\ell} = (0,\vc{h})), &\vc{h},\vc{h}' \in {\sr H},\\
&&[A_{i}]_{\vc{h},\vc{h}'} = \dd{P}(\vc{Z}_{\ell+1} = (n + i, \vc{h}') | \vc{Z}_{\ell} = (n,\vc{h})),  &\vc{h},\vc{h}' \in {\sr H}, n \ge 1. 
\end{array}
\end{eqnarray*}
 Then, the QBD process $\{\vc{Z}_{\ell}\}$ has the following transition probability matrix. 
\begin{eqnarray*}
P  =  \left(
\begin{array}{ccccc}
B_{0} & A_{+1}  &            &            &       \\
A_{-1}& A_{0}  & A_{+1} &            &       \\
           & A_{-1} & A_{0} & A_{+1} &       \\
           &             & \ddots     & \ddots     & \ddots\\
\end{array}
\right).
\label{eq:Q}
\end{eqnarray*}
  We assume the stability condition \eqn{Stability condition}, and therefore the stationary distribution exists. We denote it by row vector $\vc{\pi}$.

  For the QBD process, it is important to distinguish level from background state. For this, we partition the stationary vector $\vc{\pi}$ as $(\vc{\pi}_{0},\vc{\pi}_{1},\ldots)$ according to level. That is,
\begin{eqnarray*}
	[\vc{\pi}_{n}]_{\vc{h}} = \dd{P}(M = n, \vc{Y} = \vc{h}),
\end{eqnarray*}
 for $n \in \dd{Z}_{+}$ and $\vc{h} \in \sr{H}$, where $[\vc{\pi}_{n}]_{\vc{h}}$ denotes the $\vc{h}$-th entry
 of $\vc{\pi}_{n}$.
	As is well known (e.g., \cite{LR 1999} and \cite{NEUTS}), the stationary distribution is known to have the following matrix geometric form:
\begin{eqnarray}
\label{eqn:geometric form}
	\vc{\pi}_{n} = \vc{\pi}_{0} R^{n}, \qquad n \ge 1,
\end{eqnarray}
 where $R$ is the minimal nonnegative solution of the following equation:
\begin{eqnarray}
\label{eqn:R-M}
	R = A_{+1} + RA_{0} + R^{2} A_{-1}.
\end{eqnarray}

  When the size of $R$ is finite, we can see that the tail decay rate of \eqn{geometric form} is obtained as the maximal eigenvalue of $R$. Otherwise, this is not always true. Thus, we need certain extra conditions here. Such conditions were firstly obtained in \cite{TFM 2001}, and refined and generalized in \cite{MZ 2004}. The following result for the geometric tail decay are the specialization of the results in \cite{MZ 2004} to the QBD process (see Theorem $2.1$ of \cite{HMZ 2007}).
\begin{proposition}
\label{pro:sufficient conditions}
{\rm 
	Assume that $A \equiv A_{-1} + A_{0} + A_{+1}$ is irreducible and aperiodic, and that the Markov additive process 
 generated by $\{ A_{\ell}; \ell = 0, \pm 1 \}$ is 1-arithmetic.
	If there exist $\alpha > 1$ and positive vectors $\vc{x}$ and $\vc{y}$ such that
\begin{eqnarray}
\label{eqn:left A-vector}
&\vc{x} A_{*}(\alpha) = \vc{x}, \quad A_{*}(\alpha) \vc{y} = \vc{y},& \\
\label{eqn:positive recurrent}
&\vc{x} \vc{y} < \infty, &
\end{eqnarray}
 where $A_{*}(z) = z^{-1} A_{-1} + A_{0} + z A_{+1}$ for $z \neq 0$,
 then $R$ has left and right eigenvectors $\vc{x}$ and $\vc{r} \equiv (I - A_{0} - R A_{-1} - \alpha^{-1} A_{-1}) \vc{y}$,
 respectively, with eigenvalue $\alpha^{-1}$.
	Furthermore, if
\begin{eqnarray}
\label{eqn:pi0 y}
	\vc{\pi}_{0} \vc{y} < \infty,
\end{eqnarray}
 then we have the following geometric tail asymptotics for the stationary distribution:
\begin{eqnarray}
\label{eqn:Geometric tail asymptotics}
\lim_{n \to \infty} \alpha^{n} \vc{\pi}_{n} = \frac{\vc{\pi}_{0} \vc{r}}{\vc{x}\vc{r}} \vc{x}.
\end{eqnarray}
}
\end{proposition}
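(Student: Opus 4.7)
My plan is to establish the three assertions of the proposition in sequence: (i) $\vc{x}$ is a left $\alpha^{-1}$-eigenvector of $R$, (ii) the vector $\vc{r}$ is the corresponding right eigenvector, and (iii) the exact limit \eqn{Geometric tail asymptotics} under the added integrability \eqn{pi0 y}. The key ingredients are the matrix-quadratic equation \eqn{R-M} characterising $R$ as its minimal nonnegative solution, the matrix-geometric form \eqn{geometric form}, and a Perron--Frobenius/Vere-Jones analysis for infinite irreducible nonnegative matrices.

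\textbf{Left eigenvector.} A purely formal substitution of the Ansatz $\vc{x} R = \alpha^{-1} \vc{x}$ into \eqn{R-M} reproduces exactly the hypothesis $\vc{x} A_{*}(\alpha) = \vc{x}$, so the claim is algebraically consistent. To upgrade consistency to an actual eigenrelation I would appeal to uniqueness (up to scaling) of the positive vector $\vc{x}$ with $\vc{x} A_{*}(\alpha) = \vc{x}$, which follows from the irreducibility of $A$, combined with the minimality of $R$, to identify $\alpha^{-1}$ with the reciprocal of the convergence parameter of $R$ and $\vc{x}$ with its associated left Perron-type vector.

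\textbf{Right eigenvector.} Using $A_{*}(\alpha) \vc{y} = \vc{y}$ I first rewrite $\vc{r}$ in a more convenient form:
\begin{eqnarray*}
\vc{r} = (I - A_{0} - \alpha^{-1} A_{-1}) \vc{y} - R A_{-1} \vc{y} = \alpha A_{+1} \vc{y} - R A_{-1} \vc{y}.
\end{eqnarray*}
Then, substituting $R^{2} A_{-1} = R - A_{+1} - R A_{0}$ obtained from \eqn{R-M}, a direct manipulation gives
\begin{eqnarray*}
R \vc{r} = \alpha R A_{+1} \vc{y} - R^{2} A_{-1} \vc{y} = A_{+1} \vc{y} + R\bigl[ A_{*}(\alpha) \vc{y} - \vc{y} \bigr] - \alpha^{-1} R A_{-1} \vc{y} = \alpha^{-1} \vc{r},
\end{eqnarray*}
since the bracket vanishes; positivity of $\vc{r}$ can be read off from the same identity $\vc{r} = \alpha A_{+1} \vc{y} - R A_{-1} \vc{y}$ together with irreducibility of $A$.

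\textbf{Exact asymptotic.} The matrix-geometric form $\vc{\pi}_{n} = \vc{\pi}_{0} R^{n}$ reduces the question to the large-$n$ behaviour of $\alpha^{n} R^{n}$. Irreducibility of $A$, the 1-arithmetic hypothesis, and the finiteness $\vc{x} \vc{y} < \infty$ (which implies $\vc{x} \vc{r} < \infty$ because $\vc{r}$ is a finite linear combination of $A_{\pm 1} \vc{y}$ and $R A_{-1} \vc{y}$, bounded componentwise by $\vc{y}$) together put $\alpha R$ in the class of $R$-positive recurrent matrices in the sense of Vere-Jones. The corresponding Abelian limit theorem then yields $\alpha^{n} R^{n} \to \vc{r} \vc{x} / (\vc{x} \vc{r})$ componentwise. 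Left-multiplying by $\vc{\pi}_{0}$ and using $\vc{\pi}_{0} \vc{y} < \infty$ to dominate $\vc{\pi}_{0} \vc{r} < \infty$ delivers \eqn{Geometric tail asymptotics}. The main obstacle is exactly this final Abelian step: in the finite-state case the Jordan form of $R$ supplies the limit trivially, but here $R$ is an infinite matrix and one must extract the convergence of $\alpha^{n} R^{n}$ to the rank-one operator $\vc{r} \vc{x} / (\vc{x} \vc{r})$ from $R$-positivity of $\alpha R$; this is where the 1-arithmeticity and the two finiteness conditions \eqn{positive recurrent} and \eqn{pi0 y} are indispensable, and it is this technical piece that \cite{MZ 2004} and \cite{HMZ 2007} carry out in detail.
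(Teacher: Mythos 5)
The paper does not prove this proposition: it is stated as a known result, a specialization of \cite{MZ 2004} to the QBD setting, with the exact form cited from Theorem 2.1 of \cite{HMZ 2007}. So there is no in-paper argument for your sketch to match, and the appropriate question is whether your blind attempt is a self-contained proof. It is not, although parts of it are on target. Your right-eigenvector computation is correct and complete: rewriting $\vc{r} = \alpha A_{+1}\vc{y} - R A_{-1}\vc{y}$ using $A_{*}(\alpha)\vc{y} = \vc{y}$ and then invoking $R^{2}A_{-1} = R - A_{+1} - R A_{0}$ from \eqn{R-M} genuinely gives $R\vc{r} = \alpha^{-1}\vc{r}$ with no hidden assumptions. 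You also correctly locate the roles of the two integrability hypotheses \eqn{positive recurrent} and \eqn{pi0 y}.

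There are, however, three real gaps. First, the left-eigenvector step is only a consistency check: substituting the Ansatz $\vc{x}R = \alpha^{-1}\vc{x}$ into \eqn{R-M} shows that the conclusion would imply the hypothesis $\vc{x}A_{*}(\alpha)=\vc{x}$, not conversely; the appeal to "uniqueness plus minimality" is exactly the nontrivial content that \cite{MZ 2004} establish via the censoring and fundamental-matrix identities tying $R$ to $A_{*}(\alpha)$, and in the countably infinite setting the convergence parameter of $R$ need not even be attained a priori, so Perron uniqueness cannot be invoked off the shelf. Second, positivity of $\vc{r}$ does not "read off" from $\vc{r} = \alpha A_{+1}\vc{y} - R A_{-1}\vc{y}$: that is a difference of two nonnegative vectors, and showing the difference is positive (and, relatedly, that $\vc{x}\vc{r} < \infty$) is itself one of the outputs of the cited analysis rather than an input. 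Third, the final step — upgrading $R$-positive recurrence of $\alpha R$ to the componentwise limit $\alpha^{n}R^{n} \to \vc{r}\vc{x}/(\vc{x}\vc{r})$ and then interchanging this limit with the infinite sum $\vc{\pi}_{0}R^{n}$ — is, as you acknowledge, precisely the technical heart of \cite{MZ 2004}; deferring it to the references means your argument reproduces the paper's citation rather than supplying a proof.
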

\begin{remark}
{\rm
	Both irreducibility and aperiodicity of $A$ are easy to verify for our queueing model.
	Furthermore, the 1-arithmetic property is directly verified, that is, for each $\vc{h} \in \sr{H}$,
 the greatest common divisor of
\begin{eqnarray*}
	&& \{ \ell_{1} + \ell_{2} + \cdots + \ell_{i} ; [A_{\ell_{1}}]_{\vc{h}, \vc{h}_{1}} [A_{\ell_{2}}]_{\vc{h}_{1}, \vc{h}_{2}}
 \times \cdots \times [A_{\ell_{i}}]_{\vc{h}_{i-1}, \vc{h}},\\
  &&  \quad \quad \mbox{where } i \ge 1, \ell_{m} = 0, \pm 1, m = 1,2,\ldots, i, \vc{h}_{n} \in \sr{H}, n = 1,2,\ldots, i-1 \}
\end{eqnarray*}
 is shown to be one.
}
\end{remark}

  It is not very hard to find positive vectors $\vc{x}$ and $\vc{y}$ satisfying conditions \eqn{left A-vector} and \eqn{positive recurrent}. 
We first derive the $\alpha$ and the right invariant vector $\vc{y}$ in \eqn{left A-vector}
\begin{lemma}
\label{lem:right invariant y}
{\rm
 Let ${y}_{\vc{h}} =
\rho^{-\vc{h}\vc{1}}$ for $\vc{h} \in {\sr H}$.
Then,  $\alpha = \rho^{-k}$ and $\vc{y} = ({y}_{\vc{h}};\vc{h} \in {\sr H})$ satisfy $A_{*}(\alpha) \vc{y} = \vc{y}$.  
}
\end{lemma}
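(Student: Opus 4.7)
The plan is to verify the identity $A_*(\rho^{-k})\vc{y}=\vc{y}$ coordinatewise by direct computation against the given increment distributions. For any $n\ge 1$ the one-step transition from $(n,\vc{h})$ is generated by $\vc{X}^{(+U)}$ with $U=U(\vc{h})\equiv\{i\in J:h_i=0\}$, which is an element of $\sr{K}$. Hence the $\vc{h}$-th row of each of $A_{-1},A_0,A_{+1}$ is read off directly from \eqn{Dist of vc{X}(U)1} or \eqn{Dist of vc{X}(U)2}, according as $|U|=1$ or $|U|\ge 2$. Writing $\vc{X}^{(+U)}=(J^{(+U)},\vc{V}^{(+U)})$, the product form $y_{\vc{h}+\vc{v}}=y_{\vc{h}}\,\rho^{-\vc{v}\vc{1}}$ factors $y_{\vc{h}}$ out of the $\vc{h}$-th coordinate of $A_*(\alpha)\vc{y}$, and the eigenvector identity reduces to the scalar statement
\begin{eqnarray*}
\dd{E}\!\left[\rho^{-k\,J^{(+U)}}\,\rho^{-\vc{V}^{(+U)}\vc{1}}\right]=1,\qquad U\in\sr{K},
\end{eqnarray*}
evaluated at $\alpha=\rho^{-k}$.

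The next step is to tabulate each admissible contribution $\rho^{-kj}\rho^{-\vc{v}\vc{1}}$ from the three types of transitions. An arrival to a shortest queue gives $(j,\vc{v})=(+1,-\vc{1}+\vcn{e}_i)$ with probability $\lambda$ when $|U|=1$, and $(0,\vcn{e}_i)$ with probability $\lambda/|U|$ when $|U|\ge 2$; both yield the factor $\rho^{-1}$, so after summing over $i\in U$ (which absorbs the $1/|U|$ weight in the second case) the arrival contribution is $\lambda\rho^{-1}$ in either case. A service at a longer queue $j\in J\setminus U$ gives $(0,-\vcn{e}_j)$ with probability $\mu_j$ and contributes $\mu_j\rho$. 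A service at a shortest queue $i\in U$ gives $(-1,\vc{1}-\vcn{e}_i)$ with probability $\mu_i$, and the factor $\rho^k\cdot\rho^{-(k-1)}=\rho$ collapses to the weight $\mu_i\rho$. Summing the three classes yields $\lambda\rho^{-1}+\rho\sum_{j\in J}\mu_j$, uniformly in $U$.

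To close the verification, I would substitute the uniformization normalization \eqn{Uniform}, $\sum_{j\in J}\mu_j=1-\lambda$, together with the definition $\rho=\lambda/(1-\lambda)$ of the traffic intensity. These give $\lambda\rho^{-1}=1-\lambda$ and $\rho\sum_{j\in J}\mu_j=\lambda$, so the total is $(1-\lambda)+\lambda=1$, as required. I do not anticipate any real obstacle: the argument is essentially case-by-case bookkeeping that uses only the level change $j$ and the scalar $\vc{v}\vc{1}$ of each increment. The single conceptual point worth flagging is that the level-decreasing service contribution $\rho^k\cdot\rho^{-(k-1)}$ collapses to $\rho$ precisely because $\alpha=\rho^{-k}$, so its weight matches those of the background-only services; this cancellation is exactly what singles out $\rho^{-k}$ as the correct eigenvalue and anticipates the $\rho^k$ decay rate of the minimum queue length asserted in \thr{JSQ main result}.
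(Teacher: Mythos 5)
Your proposal is correct and follows essentially the same route as the paper's appendix proof: both perform a coordinatewise verification by tabulating the contributions of the three transition types (arrival, service at a longer queue, service at a shortest queue), factor out $y_{\vc{h}}$ via $y_{\vc{h}+\vc{v}} = y_{\vc{h}}\rho^{-\vc{v}\vc{1}}$, and close the identity with the normalization \eqn{Uniform}. The only cosmetic difference is that you phrase the reduction as a scalar expectation identity $\dd{E}[\rho^{-kJ^{(+U)}}\rho^{-\vc{V}^{(+U)}\vc{1}}]=1$ while the paper works directly with the block entries of $A_{-1},A_0,A_{+1}$; also note $\rho=\lambda/(1-\lambda)$ is not the definition of $\rho$ but a consequence of $\rho=\lambda/\sum_i\mu_i$ together with \eqn{Uniform}, a harmless slip since you invoke \eqn{Uniform} anyway.
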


This lemma is proved in \app{Right}. We next consider left invariant vector $\vc{x}$ and \eqn{positive recurrent}.  Let $\Delta_{\vc{y}}$ be the diagonal matrix whose $\vc{h}$-th diagonal element is ${y}_{\vc{h}}$ for $\vc{h} \in {\sr H}$ and the other entires are $0$. 
Since $\vc{y}$ is the right invariant vector of $A_{*}(\rho^{-k})$,
$\Delta_{\vc{y}}^{-1} A_{*}(\rho^{-k}) \Delta_{\vc{y}}$ is a stochastic matrix.
We have the following lemma, which is proved in \app{proof of positive recurrent}.

\begin{lemma}
\label{lem:positivity}
{\rm
The stochastic matrix $\Delta_{\vc{y}}^{-1} A_{*}(\rho^{-k}) \Delta_{\vc{y}}$ is positive recurrent.
}
\end{lemma}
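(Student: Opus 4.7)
\begin{proof*}{Proof proposal.}
The plan is to prove positive recurrence of the stochastic matrix
$\tilde{P}:=\Delta_{\vc{y}}^{-1}A_{*}(\rho^{-k})\Delta_{\vc{y}}$ on $\sr{H}$
via a Foster--Lyapunov drift argument. First I would unpack $\tilde{P}$
concretely: for every one-step increment $(j,\vc{v})$ appearing in
\eqn{Dist of vc{X}(U)1} or \eqn{Dist of vc{X}(U)2}, the quantity
$kj+\vc{v}\cdot\vc{1}$ equals the net change in $\sum_{i\in J}L_{i}$, which
is $+1$ for every arrival-type transition and $-1$ for every service-type
transition. Hence the twist weight $\rho^{-kj-\vc{v}\cdot\vc{1}}$ is
$\rho^{-1}$ on arrivals and $\rho$ on services, and \eqn{Uniform} together
with $\lambda=\rho\sum_{i}\mu_{i}$ yields
$\lambda\rho^{-1}+\rho\sum_{i}\mu_{i}=\sum_{i}\mu_{i}+\lambda=1$,
reconfirming the stochasticity noted in \lem{right invariant y}. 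Irreducibility
of $\tilde{P}$ is inherited from that of $A$.

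Next I would take the Lyapunov function $V(\vc{h}):=\sum_{i\in J}h_{i}$ and,
writing $U(\vc{h})=\{i\in J:h_{i}=0\}$, $S:=\sum_{i\in J}\mu_{i}$,
$S_{U}:=\sum_{i\in U}\mu_{i}$, compute the one-step drift of $V$ under
$\tilde{P}$ by cases in $|U(\vc{h})|$. When $|U(\vc{h})|=1$ with
$U=\{i_{0}\}$, the drift equals $-(k-1)S-\rho(S-k\mu_{i_{0}})$; since
$\mu_{i_{0}}\le S$ and $\rho<1$, this is uniformly bounded above by
$-(k-1)S(1-\rho)=:-\delta<0$. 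When $|U(\vc{h})|\ge 2$, the drift equals
$S(1-\rho)+k\rho S_{U}$, which is positive but uniformly bounded above by
$c:=S(1+(k-1)\rho)$.

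The principal obstacle is that for $k\ge 3$ the positive-drift region
$\{\vc{h}:|U(\vc{h})|\ge 2\}$ is infinite, so a one-step Foster inequality
is inadequate. I plan to close this gap by passing to an $n$-step drift.
From any state with $|U|\ge 2$, the probability of returning immediately to
$\{|U|=1\}$ via a service at some $i\in U$ is
$\rho S_{U}\ge 2\rho\min_{i}\mu_{i}>0$, so the sojourn time $T$ in
$\{|U|\ge 2\}$ is stochastically dominated by a geometric random variable
with uniformly bounded mean, and the cumulative expected change in $V$ during
any such excursion is bounded by $c\,\mathbb{E}[T]$, a state-independent
constant. Combining this with the uniform drift $-\delta$ on $\{|U|=1\}$
and choosing $n$ large, one should obtain
$\mathbb{E}[V(X_{n})-V(X_{0})\mid X_{0}=\vc{h}]\le -\eta$ for some $\eta>0$
and all $\vc{h}$ outside a finite subset of $\sr{H}$, and thereby conclude
positive recurrence via the multi-step version of Foster's criterion. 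The
most delicate step is controlling the proportion of time spent on
$\{|U|\ge 2\}$ uniformly in the starting state; I expect this to be handled
either by passing to the censored chain on $\{|U|=1\}$, which inherits the
uniform negative drift, or by replacing $V$ with
$V+\kappa\,\phi(|U(\vc{h})|)$ for a suitable nonnegative penalty $\phi$ and
sufficiently large $\kappa$, which shifts the per-step drift to be uniformly
negative outside a finite exceptional set.
\end{proof*}
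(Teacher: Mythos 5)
Your drift computations for the linear Lyapunov function $V(\vc{h})=\vc{h}\vc{1}$ are correct, but the resulting one-step drift is positive on the set $\{|U(\vc{h})|\ge 2\}$, which is \emph{infinite} for $k\ge 3$, and neither of the proposed fixes closes this gap; in fact the penalty route cannot work for $\rho$ near $1$. The penalty $\kappa\,\phi(|U(\vc{h})|)$ is bounded (it takes at most $k$ distinct values), and from states with $|U(\vc{h})|=1$ in which some non-minimal coordinate equals $1$ — e.g.\ $\vc{h}=(0,1,N,\ldots,N)$ for arbitrary $N$, an infinite family — the arrival transition (probability $\lambda\rho^{-1}$) sends $|U|$ to $2$ (or more), so the drift of $\phi(|U(\cdot)|)$ from such a state is at least $\lambda\rho^{-1}\bigl(\phi(2)-\phi(1)\bigr)>0$, a quantity bounded away from zero uniformly in $N$. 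Keeping the drift of $V+\kappa\phi$ negative on these states forces $\kappa\lesssim (1-\rho)$, whereas cancelling the positive drift $c=S(1+(k-1)\rho)$ on $\{|U|\ge 2\}$ forces $\kappa\gtrsim (1-\rho)^{-1}$ (here $S=\sum_i\mu_i$); these constraints are incompatible as $\rho\uparrow 1$. The censoring/multi-step route faces the same obstacle: from the infinite family of states $(0,1,N,\ldots,N)$ the one-step probability of leaving $\{|U|=1\}$ is bounded \emph{below} by a constant independent of $N$, because whether a transition changes $|U|$ depends on whether some coordinate equals $1$, information not controlled by $|U|$ alone or by $\vc{h}\vc{1}$; so the time fraction spent in $\{|U|=1\}$ cannot be made uniformly close to $1$, which the multi-step balance would require for $\rho$ close to $1$.

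The paper avoids all of this by taking the \emph{quadratic} Lyapunov function $f(\vc{h})=\frac{1}{2}\sum_{j<m}(h_j-h_m)^2$ measuring the spread of the queue lengths. Its one-step drift is bounded above by $(\lambda-\sum_i\mu_i)\vc{h}\vc{1}+\frac{k-1}{2}$, a \emph{linear} function of $\vc{h}\vc{1}$ with a negative leading coefficient by the stability condition $\lambda<\sum_i\mu_i$. This drift tends to $-\infty$ and is uniformly negative outside a finite sub-level set $\{\vc{h}:\vc{h}\vc{1}\le\text{const}\}$, so the ordinary one-step Foster criterion applies immediately. The essential point your proposal misses is that the drift must grow (here, linearly) so as to swamp the bounded corrections coming from the different boundary faces — a linear $V$ yields only bounded drift and therefore cannot overcome the unboundedly located bad set $\{|U|\ge 2\}$, no matter how it is combined with a bounded penalty.
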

Using these lemmas, we will verify the conditions \eqn{left A-vector} and \eqn{positive recurrent} in \sectn{Proof of Theorem}. 

We finally consider the condition \eqn{pi0 y}. However, we need much effort to check condition \eqn{pi0 y} since it includes the unknown vector $\vc{\pi}_{0}$. For this, we will consider a convergence domain of the moment generating function for the JSQ in \sectn{Stationary}. In the next subsection, we only present results on the domain as lemmas, and prove them in Appendices \appt{exponential tightness} and \appt{stationary inequality} and in \sectn{Proof of Domain}.

\subsection{Stationary inequality for moment generating functions}
\label{sec:Stationary}

	For $\vc{\theta} \equiv (\theta_{0},\theta_{1},\theta_{2},\ldots,\theta_{k} ) \in \dd{R}^{k+1}$, where $\dd{R}$ is the set of all
 real numbers, let $\varphi(\vc{\theta})$ denote the moment generating function of the random vector $\vc{Z} = (M, \vc{Y})$
 in \eqn{stationary 1}, that is,
\begin{eqnarray}
\label{eqn:moment function}
	\varphi(\vc{\theta}) = \dd{E}(e^{\vc{\theta} \vc{Z}}),
\end{eqnarray}
 where $\vc{a} \vc{b}$ denotes the inner product of vectors $\vc{a}$ and $\vc{b}$. 
	We are interested in the convergence domain of $\varphi$ which is denoted by ${\sr D}$, i.e.,
\begin{eqnarray}
\label{eqn:domain D}
{\sr D} = \{ \vc{\theta} \in \dd{R}^{k+1}; \varphi(\vc{\theta}) < \infty \}.
\end{eqnarray}

	In what follows, we will study the domain ${\sr D}$ by using the stationary equation for the moment generating function.
	To this end, we introduce some notations.
	For $U \in {\sr K}$, let $\gamma_{+U}$ and $\gamma_{0U}$ be the moment generating functions of the random vectors $\vc{X}^{(+U)}$ and $\vc{X}^{(0U)}$, respectively, that is,
\begin{eqnarray*}
	\gamma_{+U}(\vc{\theta}) = \dd{E}(e^{\vc{\theta} \vc{X}^{(+U)}}), \qquad
	\gamma_{0U}(\vc{\theta}) = \dd{E}(e^{\vc{\theta} \vc{X}^{(0U)}}), \qquad \vc{\theta} \in \dd{R}^{k+1}.
\end{eqnarray*}
	From equations \eqn{Dist of vc{X}(U)1}--\eqn{Dist of vc{X}(0,U)2}, we have
\begin{eqnarray}
\label{eqn:moment X}
&&\gamma_{+U}(\vc{\theta}) = \left \{
\begin{array}{ll}
\displaystyle{
 \lambda e^{\theta_{0} - \sum_{j \in J \setminus U} \theta_{j}}}
+ \sum_{j \in J \setminus U} \mu_{j} e^{-\theta_{j}} 
+ \mu_{i} e^{-\theta_{0} + \sum_{j \in J \setminus U} \theta_{j}},
& \mbox{$|U| = 1$, $i \in U$},\\
\displaystyle{
\sum_{i \in U} \frac{1}{|U|} \lambda e^{\theta_{i}}}
+ \sum_{i \in J \setminus U} \mu_{i} e^{-\theta_{i}} 
+ \sum_{i \in U} \mu_{i} e^{-\theta_{0} + \sum_{j \in J \setminus \{i\}} \theta_{j}}, 
& \mbox{$|U| \ge 2$},
\end{array}
\right.\\
\label{eqn:moment X0}
&&\gamma_{0U}(\vc{\theta}) = \left \{
\begin{array}{ll}
\displaystyle{
\lambda e^{\theta_{0} - \sum_{j \in J \setminus U} \theta_{j}}} 
+ \sum_{j \in J \setminus U} \mu_{j} e^{-\theta_{j}} 
+ \mu_{i},
& \mbox{$|U| = 1$, $i \in U$},\\
\displaystyle{
\sum_{i \in U} \frac{1}{|U|} \lambda e^{\theta_{i}}} 
+ \sum_{i \in J \setminus U} \mu_{i} e^{-\theta_{i}} 
+ \sum_{i \in U} \mu_{i},
& \mbox{$|U| \ge 2$}.
\end{array}
\right.
\end{eqnarray}
	We further define two moment generating functions $\varphi_{+U}$ and $\varphi_{0U}$ as follows:
\begin{eqnarray}
\label{eqn:MGF varphiU}
	\varphi_{+U}(\vc{\theta}) &=& \dd{E}(e^{\vc{\theta} \vc{Z}}1(\vc{Z} \in {\sr S}_{+U})),\\
\label{eqn:MGF varphiU0}
  \varphi_{0U}(\vc{\theta}) &=& \dd{E}(e^{\vc{\theta} \vc{Z}}1(\vc{Z} \in {\sr S}_{0U})),
\end{eqnarray}
 for $\vc{\theta} \in \dd{R}^{k+1}$.
\begin{remark}
{\rm 
\label{rem:Remark for varphiU and varphiU0}
	For each $U \in {\sr K}$ and $\vc{\theta} \in \mathbb{R}^{k+1}$, $\varphi_{+U}(\vc{\theta})$ does not depend on the parameter $\theta_{i}$ for $i \in U$ since the expectation in \eqn{MGF varphiU} is taken over the event $\{\vc{Z} \in {\sr S}_{+U}\}$, i.e., 
 $Y_{i} = 0$ for all $i \in U$.
	Similarly, $\varphi_{0U}(\vc{\theta})$ does not depend on the parameters $\theta_{0}$ and $\theta_{i}$ for $i \in U$.
}
\end{remark}

From \eqn{moment X} and \eqn{moment X0}, for all $U \in {\sr K}$, it is easy to see that $\gamma_{+U}(\vc{\theta})$ and $\gamma_{0U}(\vc{\theta})$ are finite for all $\vc{\theta} \in \mathbb{R}^{k+1}$.
	Thus, from \eqn{stationary 1}, we have a stationary equation of moment generating function as long as $\varphi(\vc{\theta})$ is finite.
\begin{eqnarray}
\label{eqn:stationary 2}
\varphi(\vc{\theta}) = \sum_{U \in {\sr K}} \left( 
\gamma_{+U}(\vc{\theta}) \varphi_{+U}(\vc{\theta}) + 
\gamma_{0U}(\vc{\theta}) \varphi_{0U}(\vc{\theta})
\right ) .
\end{eqnarray}
Furthermore, partitioning $\varphi(\vc{\theta})$ concerning each boundary, we have the following decomposition:
\begin{eqnarray}
\label{eqn:moment decomposition}
\varphi(\vc{\theta}) = \sum_{U \in {\sr K}} (\varphi_{+U} (\vc{\theta}) + \varphi_{0U} (\vc{\theta})).
\end{eqnarray}
From \eqn{stationary 2} and \eqn{moment decomposition}, we get
\begin{eqnarray}
\label{eqn:stationary 3}
\sum_{U \in {\sr K}} \left( (1-\gamma_{+U}(\vc{\theta})) \varphi_{+U}(\vc{\theta}) + (1-\gamma_{0U}(\vc{\theta})) \varphi_{0U}(\vc{\theta}) \right) = 0.
\end{eqnarray}
The stationary equation \eqn{stationary 3} holds at least $\vc{\theta} \le \vc{0}$.

We consider the convergence domain of the moment generating function $\varphi(\vc{\theta})$ for $\vc{\theta} > \vc{0}$.
We first prove that the distribution of $\vc{Z}$ has a light tail.
For this, we will use the idea of Foley and Mcdonald \cite{FM 2001}, in which they obtain a similar result for $k=2$. 
\begin{lemma}
\label{lem:exponential tightness}
{\rm
Under the stability condition \eqn{Stability condition}, there exists an $\epsilon > 0$ such that $\mathbb{E}(e^{\epsilon \sum_{i=1}^{k} L_{i}}) < \infty$, where $L_{i} = M + Y_{i}$ for $i \in J$, and therefore
$\varphi(0,\epsilon \vc{1}) < \infty$.
}
\end{lemma}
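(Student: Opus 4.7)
The plan is to establish a geometric Foster--Lyapunov drift inequality for the uniformized discrete-time chain $\{\vc{L}_{\ell}\}$ with the exponential test function $V(\vc{l}) = e^{\epsilon \sum_{i=1}^{k} l_{i}}$ for a small enough $\epsilon > 0$; this is the $k$-dimensional analogue of the Foley--McDonald argument used in \cite{FM 2001} for $k=2$. A direct calculation from the one-step distributions appearing in \eqn{stationary 1}, under the uniformization \eqn{Uniform}, shows that at a state $\vc{l}$ with $S(\vc{l}) \equiv \{i \in J : l_{i} \ge 1\}$,
\begin{eqnarray*}
 \frac{\dd{E}[V(\vc{L}_{\ell+1}) \mid \vc{L}_{\ell} = \vc{l}]}{V(\vc{l})} = \lambda e^{\epsilon} + \sum_{i \in S(\vc{l})} \mu_{i} e^{-\epsilon} + \sum_{i \in J \setminus S(\vc{l})} \mu_{i},
\end{eqnarray*}
because an arrival always increments exactly one coordinate (regardless of which shortest queue it joins), each busy server $i$ decrements its own queue at rate $\mu_{i}$, and each empty queue contributes a self-loop at rate $\mu_{i}$. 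When all servers are busy this ratio equals $\lambda e^{\epsilon} + (1-\lambda) e^{-\epsilon}$, which is strictly less than $1$ for all sufficiently small $\epsilon > 0$ since the stability condition \eqn{Stability condition} combined with the normalization \eqn{Uniform} is simply $\lambda < 1 - \lambda$.

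The main obstacle is the boundary, where some queues are empty and the corresponding service capacity is idle. Relative to the interior formula the drift ratio picks up the extra term $(1 - e^{-\epsilon}) \sum_{i \notin S(\vc{l})} \mu_{i}$, which may push the ratio above $1$. To absorb it I would modify the test function by a boundary penalty of the form
\begin{eqnarray*}
 \tilde{V}(\vc{l}) = e^{\epsilon \sum_{i=1}^{k} l_{i}} \Bigl( 1 + \delta \sum_{i=1}^{k} 1(l_{i} = 0) \Bigr),
\end{eqnarray*}
with $\delta > 0$ chosen so that the JSQ routing of an arrival into an empty queue, which decreases the boundary indicator and hence the penalty factor, offsets the idle-capacity contribution. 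An equivalent alternative is a multi-step drift: from any boundary state the chain fills every queue within a bounded number of steps with probability bounded below (since arrivals are routed to empty queues and all $\mu_{i} > 0$), so an $n$-step drift becomes strictly contractive outside a finite set.

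Once a geometric drift inequality of the form $\dd{E}[\tilde{V}(\vc{L}_{\ell+1}) \mid \vc{L}_{\ell} = \vc{l}] \le \kappa \tilde{V}(\vc{l}) + b\, 1(\vc{l} \in C)$ with $\kappa < 1$ and a finite set $C$ is in hand, standard geometric ergodicity results yield $\dd{E}[\tilde{V}(\vc{L})] < \infty$ in stationarity, and since $V \le \tilde{V} \le (1+k\delta) V$ this is exactly the first assertion of the lemma. The second assertion $\varphi(0,\epsilon \vc{1}) < \infty$ then follows at once from $Y_{i} \le L_{i}$ for every $i \in J$. The delicate point throughout is the boundary handling: the intuition that JSQ equalizes queue lengths so quickly that boundary excursions are short must be converted into a quantitative drift statement, and the boundary penalty (or the multi-step version) is exactly what carries this out.
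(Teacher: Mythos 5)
Your overall strategy (Foster--Lyapunov drift with an exponential test function, invoking geometric ergodicity to get a finite stationary moment generating function) is the same as the paper's, and you correctly identify the real obstacle: with the naive exponent $\epsilon\sum_i l_i$, the one-step drift ratio becomes $\lambda e^{\epsilon}+\sum_{i\in S(\vc{l})}\mu_i e^{-\epsilon}+\sum_{i\notin S(\vc{l})}\mu_i$, which exceeds $1$ for small $\epsilon$ at imbalanced states where idle capacity $\sum_{i\notin S(\vc{l})}\mu_i$ outweighs the overall drift. However, your proposed fix --- the multiplicative penalty $\tilde V(\vc{l})=e^{\epsilon\sum l_i}\bigl(1+\delta\sum_i 1(l_i=0)\bigr)$ --- has a concrete gap that you did not account for: \emph{departures} can also change $\sum_i 1(l_i=0)$. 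Consider a state with $l_1=1$ and all other queues large. A departure at queue $1$ empties it, so that transition now carries an extra factor $(1+\delta)$ and the drift ratio at this state is approximately
\[
\lambda e^{\epsilon}+(1-\lambda)e^{-\epsilon}+\mu_1\delta e^{-\epsilon}\approx 1-(1-2\lambda)\epsilon+\mu_1\delta ,
\]
forcing $\delta\lesssim(1-2\lambda)\epsilon/\mu_1$. At the extreme boundary state $(N,0,\dots,0)$ with $N$ large, the same linearization forces $\delta\gtrsim(\lambda-\mu_1)\epsilon/\lambda$, so you need $\mu_1(\lambda-\mu_1)<\lambda(1-2\lambda)$ for the two constraints to be compatible. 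This fails for quite ordinary parameter values: for instance, homogeneous servers with $k=3$ and $\rho=0.9$ give $\lambda\approx0.474$, $\mu\approx0.175$, and $\mu(\lambda-\mu)\approx0.052>\lambda(1-2\lambda)\approx0.025$. So the $\delta$-window is empty and the penalized Lyapunov function as written does not produce a uniform negative drift.

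The paper avoids the boundary problem entirely by a different choice of exponent: it takes $f(\vc{u})=e^{\alpha\sqrt{\sum_i u_i^2}}$ (the Euclidean norm, not $\ell_1$). Since $\partial r/\partial u_i=u_i/r$, incrementing a short queue changes $r$ only by $O(1/r)$, so at an imbalanced state the JSQ arrival (which always goes to a minimum-length queue) contributes almost nothing to the Lyapunov value, while departures from the long queues still pull $r$ down by roughly $u_{\ell_k}/r\ge 1/\sqrt{k}$. This is exactly the ``JSQ equalizes queue lengths'' intuition you articulate, but encoded directly in the geometry of the test function rather than bolted on as a boundary penalty; no auxiliary $\delta$-parameter or empty/nonempty bookkeeping is required. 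Your alternative suggestion of a multi-step drift is in principle sound (the chain does refill empty queues in a bounded number of steps with probability bounded below), but as written it is only an outline and would still need a careful uniformity argument over the countable state space; the paper's single-step Euclidean-norm drift is both cleaner and already complete. I would recommend either reproducing the paper's Euclidean-norm calculation, or, if you want to keep the $\ell_1$ structure, replacing the indicator-based penalty by something smooth like $1+\delta\sum_i e^{-c l_i}$ and redoing the drift bound from scratch with the departure-creates-empty-queue case handled explicitly.
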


The proof of this lemma is deferred to \app{exponential tightness}.
From this lemma, we find a confirmed region for the convergence domain of the moment generating function $\varphi$ in the positive orthant. 
Starting  with this region, we will expand the confirmed region of the convergence domain ${\sr D}$.
To this end, we consider the stationary equation \eqn{stationary 3}.
However, we only know that the stationary equation \eqn{stationary 3} holds with $\vc{\theta} \le \vc{0}$.
Thus, we can not use the stationary equation \eqn{stationary 3} directly.
Instead of doing so, we derive inequalities on the stationary distribution.
For this, we recall that ${\sr K}$ is the set of all subsets of $J$ except for empty set.

\begin{lemma}
\label{lem:stationary inequality}
{\rm
For each $\vc{\theta} \in \mathbb{R}^{k+1}$, we have the following results.\\
(a) If $\varphi_{0U}(\vc{\theta}) < \infty$ and $1-\gamma_{+U}(\vc{\theta}) > 0$ for all $U \in {\sr K}$, then we have 
\begin{eqnarray}
\label{eqn:stationary inequality}
\sum_{U \in {\sr K}}(1 - \gamma_{+U}(\vc{\theta}))\varphi_{+U}(\vc{\theta}) 
 \leq \sum_{U \in {\sr K}} (\gamma_{0U}(\vc{\theta}) - 1)\varphi_{0U}(\vc{\theta}) <\infty. 
\end{eqnarray}
(b) Let ${\sr A}$ be a subset ${\sr K}$.
If
\begin{eqnarray}
\label{eqn:condition a1}
&&\gamma_{+U}(\vc{\theta}) < 1,  \gamma_{0U}(\vc{\theta}) <1, \,\,\, \qquad U \in {\sr K} \setminus {\sr A}, \\
\label{eqn:condition a2}
&&\varphi_{+U'}(\vc{\theta}) < \infty, \varphi_{0U'}(\vc{\theta})<\infty, \quad  U' \in {\sr A},
\end{eqnarray}
then we have 
\begin{eqnarray}
\label{eqn:stationary inequality2}
&&\sum_{U \in {\sr K} \setminus {\sr A}} \left( (1 - \gamma_{+U}(\vc{\theta}))\varphi_{+U}(\vc{\theta}) + (1 - \gamma_{0U}(\vc{\theta}))\varphi_{0U}(\vc{\theta}) \right) \nonumber \\
&& \qquad \leq \sum_{U' \in {\sr A}} \left( (\gamma_{+U'}(\vc{\theta}) - 1)\varphi_{+U'}(\vc{\theta}) + (\gamma_{0U'}(\vc{\theta}) - 1)\varphi_{0U'}(\vc{\theta}) \right) < \infty.
\end{eqnarray}
(c) If all the conditions in either (a) or (b) hold, then $\varphi(\vc{\theta}) < \infty$.
}
\end{lemma}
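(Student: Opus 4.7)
The plan is to derive the inequalities by a truncated version of the stationary equation \eqn{stationary 3}, so as to sidestep the fact that \eqn{stationary 3} is itself obtained from \eqn{stationary 2} and \eqn{moment decomposition} by subtraction and would require $\varphi(\vc{\theta})<\infty$ as a prerequisite. The device that makes this truncation effective is that the random-walk increments $\vc{X}^{(+U)}$ and $\vc{X}^{(0U)}$ are bounded by $1$ in supremum norm, so the restriction of the state space to a bounded set preserves the essential structure of the one-step transitions.

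\textbf{Part (a).} Fix $\vc{\theta}$ satisfying the hypothesis, and for $N\ge 1$ define the truncated moment generating functions
\begin{eqnarray*}
\varphi_{+U}^{(N)}(\vc{\theta})=\dd{E}\!\left[e^{\vc{\theta}\vc{Z}}\,1\bigl(\vc{Z}\in\sr{S}_{+U},\,|\vc{Z}|_\infty\le N\bigr)\right],\quad \varphi_{0U}^{(N)}(\vc{\theta})=\dd{E}\!\left[e^{\vc{\theta}\vc{Z}}\,1\bigl(\vc{Z}\in\sr{S}_{0U},\,|\vc{Z}|_\infty\le N\bigr)\right].
\end{eqnarray*}
Each is a finite sum of nonnegative terms, hence finite, and both increase monotonically to $\varphi_{+U}(\vc{\theta})$ and $\varphi_{0U}(\vc{\theta})$ respectively as $N\to\infty$. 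Applying the one-step stationarity $\vc{Z}\dequal\vc{Z}+\vc{X}$ against the bounded test function $\vc{z}\mapsto e^{\vc{\theta}\vc{z}}\,1(|\vc{z}|_\infty\le N)$, decomposing by the occupied boundary face, and using that $|\vc{X}|_\infty\le 1$ forces every state in $\{|\vc{z}|_\infty\le N-1\}$ to transition into $\{|\vc{z}|_\infty\le N\}$ surely, I obtain after rearrangement the finite-sum identity
\begin{eqnarray*}
\sum_{U\in\sr{K}}(1-\gamma_{+U}(\vc{\theta}))\varphi_{+U}^{(N-1)}(\vc{\theta})+\sum_{U\in\sr{K}}(1-\gamma_{0U}(\vc{\theta}))\varphi_{0U}^{(N-1)}(\vc{\theta})=\sr{R}_N(\vc{\theta}),
\end{eqnarray*}
where the remainder $\sr{R}_N(\vc{\theta})$ is determined by the stationary mass in the shell $\{N\le|\vc{z}|_\infty\le N+1\}$. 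Under the hypothesis $1-\gamma_{+U}(\vc{\theta})>0$, the first sum on the left is monotone nondecreasing and nonnegative; isolating it and using monotone convergence on the (finite) $0U$-contribution yields \eqn{stationary inequality} after passage to $\liminf_N$.

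\textbf{Parts (b) and (c).} The same truncation identity works for (b), but now one splits the left sum by the partition $\sr{K}=\sr{A}\sqcup(\sr{K}\setminus\sr{A})$: on $\sr{K}\setminus\sr{A}$ the sign hypotheses make the summands nonnegative and monotone in $N$; on $\sr{A}$ the finiteness hypotheses provide uniform bounds that can be absorbed into a modified finite right-hand side. The same limiting argument yields \eqn{stationary inequality2}. For (c), the inequality in (a) or (b) combined with $1-\gamma_{+U}(\vc{\theta})>0$ forces each $\varphi_{+U}(\vc{\theta})<\infty$ (and $\varphi_{0U}(\vc{\theta})<\infty$ for $U\in\sr{K}\setminus\sr{A}$ in case (b)), so the decomposition \eqn{moment decomposition} immediately yields $\varphi(\vc{\theta})<\infty$.

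The principal obstacle is control of the remainder $\sr{R}_N(\vc{\theta})$ in the limit. Because $\varphi(\vc{\theta})$ may a priori be infinite, the shell masses that dominate $\sr{R}_N$ need not decay, and the naive argument $\sr{R}_N\to 0$ is unavailable. The resolution is that the monotone nonnegative left-hand sum together with the convergent $0U$-sum forces $\sr{R}_N$ to converge in $[-\infty,+\infty]$; a pigeonhole over consecutive shells combined with the boundedness in magnitude of $\sr{R}_N$ by a fixed multiple of the neighboring shell masses extracts a subsequence along which $\sr{R}_N$ remains bounded above, which pins down the limit of the left-hand side at the finite value $\sum_U(\gamma_{0U}(\vc{\theta})-1)\varphi_{0U}(\vc{\theta})$ via Fatou's lemma.
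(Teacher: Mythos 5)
Your general strategy --- apply the stationary equation to a bounded test function, truncate, then pass to the limit --- is the same as the paper's, but the specific truncation you chose creates an obstacle that you identify correctly and then do not resolve. The paper truncates the \emph{exponent}: it sets $g_n(x)=\min(x,n)$ and works with $e^{g_n(\vc{\theta}\vc{Z})}$. Because $g_n(x+y)\le g_n(x)+y$ when $x\le n$ and $g_n(x+y)\le g_n(x)$ when $x>n$, the contribution of states with $\vc{\theta}\vc{Z}>n$ is \emph{exactly} $e^n\,\dd{P}(\vc{Z}\in\sr{S}_{+U},\vc{\theta}\vc{Z}>n)$ on both sides of the stationary identity, and these terms cancel. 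What remains is a clean inequality between quantities restricted to $\{\vc{\theta}\vc{Z}\le n\}$, and a single application of monotone convergence finishes the proof. No remainder to estimate.

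Your truncation $1(|\vc{Z}|_\infty\le N)$ does not enjoy this cancellation: the indicator of $\{|\vc{z}+\vc{x}|_\infty\le N\}$ is not $1(|\vc{z}|_\infty\le N)$ on the shell $|\vc{z}|_\infty\in\{N,N+1\}$, so a genuine remainder $\sr{R}_N$ survives, and it is a \emph{signed} difference of shell masses weighted by $e^{\vc{\theta}\vc{z}}$. Your proposed fix --- pigeonhole across consecutive shells to find a subsequence on which $\sr{R}_N$ is bounded above --- does not close the gap. As you observe, $\sr{R}_N$ equals a monotone nondecreasing sequence minus a convergent one, so $\sr{R}_N$ converges in $[-\infty,+\infty]$ with no subsequential ambiguity: if the limit is $+\infty$, every subsequence diverges. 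The bound $|\sr{R}_N|\le C(\Delta^{(N)}+\Delta^{(N+1)})$ (with $\Delta^{(N)}$ the weighted shell mass) would only help if some subsequence of $\Delta^{(N)}$ were bounded, but precisely in the failure scenario you must rule out --- namely $\varphi_{+U}(\vc{\theta})=\infty$ for some $U$ --- the weighted shell masses may all diverge (for instance, geometrically), and then no bounded subsequence exists. In short, the pigeonhole presupposes the conclusion. To make the truncation device work you really want the paper's $\min(\cdot,n)$ trick, which turns the problematic boundary contribution into identical quantities on both sides rather than into a remainder to be estimated.

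Your parts (b) and (c) inherit the same issue, since they rely on the same truncation identity and limit passage; once part (a) is repaired in the paper's way, (b) is a routine modification (keep the $\sr{A}$-terms, for which all moment generating functions are finite, on the right-hand side) and (c) follows from the decomposition \eqn{moment decomposition} exactly as you say.
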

\begin{remark}
{\rm
If $\varphi_{+U}(\vc{\theta})$ and $\varphi_{0U}(\vc{\theta})$ are finite for all $U \in {\sr K}$, 
then the stationary equation \eqn{stationary 2} is satisfied, 
and we have \eqn{stationary inequality} and \eqn{stationary inequality2} with equalities.
The important claim of this lemma is that \eqn{stationary inequality} and \eqn{stationary inequality2} hold even if the finiteness of the left hand sides are unknown.
}
\end{remark}

This lemma is an adaptation of Lemma 6.4 in \cite{Miyazawa 2010}, but we prove it in \app{stationary inequality} for this paper to be selfcontained.
Using these  inequalities, we obtain a part of domain ${\sr D}$ to be sufficient for our purpose. We prove the following lemma in \sectn{Proof of Domain}.

\begin{lemma}
\label{lem:domain D}
{\rm
 For $(\eta_{0},\eta_{1}) \in \mathbb{R}^{2}$, if
\begin{eqnarray}
\label{eqn:condition the domain D}
\eta_{0} < \log \rho^{-k}, \qquad \eta_{1} < \frac{1}{k-1}\log \rho^{-k} = \log \rho^{-1} + \frac{1}{k-1}\log \rho^{-1},
\end{eqnarray}
 then we have $ \varphi(\eta_{0},\eta_{1}\vc{1}) < \infty$.
}
\end{lemma}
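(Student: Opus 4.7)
The plan is to prove \lem{domain D} by a bootstrapping argument: starting from the confirmed region supplied by \lem{exponential tightness} (which yields $\varphi(0,\epsilon\vc{1})<\infty$ for some $\epsilon>0$), iteratively enlarge the convergence region of $\varphi(\eta_0,\eta_1\vc{1})$ using \lem{stationary inequality}.

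My first concrete task is to compute $\gamma_{+U}(\eta_0,\eta_1\vc{1})$ and $\gamma_{0U}(\eta_0,\eta_1\vc{1})$ explicitly from \eqn{moment X} and \eqn{moment X0}. Writing $S=\sum_{i\in J}\mu_i$ and $\mu_U=\sum_{i\in U}\mu_i$, for $|U|\ge 2$ we have
\[
\gamma_{+U}(\eta_0,\eta_1\vc{1})=\lambda e^{\eta_1}+(S-\mu_U)e^{-\eta_1}+\mu_U e^{(k-1)\eta_1-\eta_0},
\]
with $\gamma_{0U}$ differing only in the last term (replaced by $\mu_U$); the case $|U|=1$ uses $\lambda e^{\eta_0-(k-1)\eta_1}$ as the first summand. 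The principal constraint turns out to be $\gamma_{+J}$ (with $U=J$ and $\mu_U=S$): a direct calculation shows the curve $\gamma_{+J}=1$ passes through the conjectured decay point $(\log\rho^{-k},\log\rho^{-1})$, and $\gamma_{+J}>1$ near the corner of the target region. These explicit formulas let me classify, for each candidate target $\vc{\theta}=(\eta_0,\eta_1\vc{1})$, which rates exceed $1$ and which do not.

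The core iterative step applies \lem{stationary inequality}(b) together with (c): I would identify a subset ${\sr A}\subset{\sr K}$ consisting of those $U$ for which $\gamma_{+U}(\vc{\theta})\ge 1$ or $\gamma_{0U}(\vc{\theta})\ge 1$ at the current target, and reduce the problem to the finiteness of the boundary generating functions $\varphi_{+U'}(\vc{\theta})$, $\varphi_{0U'}(\vc{\theta})$ for $U'\in{\sr A}$. By \rem{Remark for varphiU and varphiU0}, these boundary functions are independent of the coordinates that vanish on their corresponding face, so they reduce to lower-dimensional generating functions that can be controlled from earlier stages of the bootstrap. I would thread a path in $(\eta_0,\eta_1)$-space from the base point to the target, and at each stage process members of ${\sr A}$ in order of decreasing $|U|$, so that the highest-dimensional marginals (which involve fewest parameters) are resolved first; the scheme terminates because ${\sr K}$ is finite.

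The main obstacle will be the terminal case $U=J$: since $\gamma_{+J}\ge 1$ near the target, $J$ must belong to ${\sr A}$, and we need $\varphi_{+J}(\vc{\theta})=\dd{E}[e^{\eta_0 M}1(\vc{Y}=\vc{0},M\ge 1)]<\infty$ for $\eta_0<\log\rho^{-k}$. Because this is essentially the $\vc{Y}=\vc{0}$ specialization of \thr{JSQ main result}, it cannot be invoked circularly. To break the circularity, I would exploit that $\varphi_{+J}$ depends only on $\eta_0$ and estimate it at the simpler auxiliary parameter $(\eta_0,\vc{0})$; at $\eta_1=0$ the formula reduces to $\gamma_{+U}(\eta_0,\vc{0})=1+\mu_U(e^{-\eta_0}-1)<1$ for $|U|\ge 2$ and $\eta_0>0$, leaving only the $|U|=1$ subsets with $\mu_i<\lambda$ potentially problematic. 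These can be absorbed into a smaller ${\sr A}$ in a nested application of \lem{stationary inequality}, and since $\varphi_{+J}$ is independent of $\eta_1$ the resulting bound transfers back to the full target ray. The delicate point throughout is ensuring each bootstrap stage strictly enlarges the confirmed region and terminates before the target is reached, which follows from the finiteness of ${\sr K}$ and the explicit monotonicity of the rates $\gamma_{+U},\gamma_{0U}$ in $(\eta_0,\eta_1)$.
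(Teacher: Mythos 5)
The bootstrap strategy you outline matches the paper's in spirit (iteratively enlarging the confirmed region via Lemma \lemt{stationary inequality}), and you correctly identify both that $\partial\Gamma^{(2)}_{+J}$ passes through $(\log\rho^{-k},\log\rho^{-1})$ and that the real difficulty is controlling $\varphi_{+J}$, whose finiteness at $\eta_{0}$ close to $\log\rho^{-k}$ cannot be invoked circularly. Where the proposal breaks down is in the escape route you offer for that circularity.

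You propose to bound $\varphi_{+J}$ by working at the slice $\eta_{1}=0$, claiming that there $\gamma_{+U}(\eta_{0},\vc{0})<1$ for $|U|\ge 2$ and $\eta_{0}>0$, so only singletons $U=\{i\}$ with $\mu_{i}<\lambda$ are problematic. The first half is right, but the singleton analysis is not: from \eqn{moment X} one gets $\gamma_{+\{i\}}(\eta_{0},\vc{0})=\lambda e^{\eta_{0}}+(\,\sum_{j}\mu_{j}-\mu_{i}\,)+\mu_{i}e^{-\eta_{0}}$, which is $<1$ precisely when $0<\eta_{0}<\log(\mu_{i}/\lambda)$. Under the stability condition $\rho<1$ and $k\ge 2$ one always has $\log(\mu_{i}/\lambda)<\log\rho^{-k}$ (since $\mu_{i}\le\sum_{j}\mu_{j}$ forces $\mu_{i}/\lambda\le\rho^{-1}<\rho^{-k}$), so \emph{every} singleton $\gamma_{+\{i\}}$ exceeds $1$ well before $\eta_{0}$ approaches $\log\rho^{-k}$, not merely those with $\mu_{i}<\lambda$. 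Consequently an iteration confined to the axis $\eta_{1}=0$ (and more generally to the ray $\vc{\theta}=(\eta_{0},\eta_{1}\vc{1})$) stalls: once some $\gamma_{+\{i\}}\ge1$, part (b) of \lem{stationary inequality} forces $\{i\}$ into ${\sr A}$, and you are left needing to know $\varphi_{+\{i\}}(\eta_{0},\vc{0})<\infty$, which is again an unknown boundary quantity of the same nature as $\varphi_{+J}$. The ``nested application'' you gesture at does not resolve this, because at that $\eta_{0}$ there is no place to put these indices where their rates are below $1$ on the ray.

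The step the paper takes that is absent from your proposal is to leave the ray $\{(\eta_{0},\eta_{1}\vc{1})\}$ and evaluate the stationary inequality at the $V$-dependent points $\vc{\zeta}(V)$ of \eqn{theta V}, with $\zeta_{0,j}=\eta_{0}$ and $\zeta_{1,j}=\eta_{0}/j$, so that only the coordinates in $V$ carry a positive weight. With that structure one inducts on $|V|$ (Lemma \lemt{varphi V}): condition (C1) holds because $U\in{\sr G}^{c}(V)$ forces $V\cap U=\emptyset$ and the explicit cancellations in $\gamma_{\pm U}(\vc{\zeta}(V))$ make them strictly less than $1$ without any restriction on $\eta_{0}$; condition (C2) is reduced to the single-parameter statement $\varphi_{+\{i\}}(\eta_{0},\vc{0})<\infty$ for $|V|=1$, which is then supplied inductively through Lemma \lemt{direction eta1} and the zig-zag iteration of \eqn{eta 0 ell}--\eqn{eta 1 ell}, whose fixed point is shown to be $\log\rho^{-k}$. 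Without this off-ray parameterization the bootstrap cannot sidestep the unknown face MGFs, and I do not see how your outline reaches the stated conclusion.
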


This fact will be used to verify the condition \eqn{pi0 y}.
We are now ready to prove \thr{JSQ main result} using 
\pro{sufficient conditions}.
\subsection{Verifying the sufficient conditions in \pro{sufficient conditions}}
\label{sec:Proof of Theorem}
From \lem{right invariant y}, we already obtain $\alpha = \rho^{-k}$ and $\vc{y} = (y_{\vc{h}})$, where ${y}_{\vc{h}} = \rho^{-\vc{h}\vc{1}}$ for $\vc{h} \in {\sr H}$. 
So, we need to check the conditions \eqn{left A-vector}, \eqn{positive recurrent} and \eqn{pi0 y}.
From \lem{positivity}, the stationary distribution of $\Delta_{\vc{y}}^{-1} A_{*}(\rho^{-k}) \Delta_{\vc{y}}$  exists, 
and denote it by $\vc{\nu}$, that is,
\begin{eqnarray*}
\vc{\nu}\Delta_{\vc{y}}^{-1} A_{*}(\rho^{-k}) \Delta_{\vc{y}}  = \vc{\nu}.
\end{eqnarray*}
Thus, the condition \eqn{left A-vector} holds with $\vc{x} =  \vc{\nu}\Delta_{\vc{y}}^{-1}$.
In addition, we have
\begin{eqnarray*}
\vc{x} \vc{y} = \vc{\nu}\Delta_{\vc{y}}^{-1} \vc{y}  = 1.
\end{eqnarray*}
Hence,  the condition \eqn{positive recurrent} is satisfied.

We finally verify the condition \eqn{pi0 y}.
We have the following equation.
\begin{eqnarray}
\label{eqn:level 0}
\vc{\pi}_{0} \vc{y} = \sum_{\vc{h} \in {\sr H}} [\vc{\pi}_{0}]_{\vc{h}} \rho^{-\vc{h}\vc{1}} =\sum_{\vc{h} \in {\sr H}} [\vc{\pi}_{0}]_{\vc{h}} e^{\log \rho^{-1}\vc{h}\vc{1}}.
\end{eqnarray}
Since $\varphi$ is the moment generating function of stationary distribution, for $(\eta_{0},\eta_{1}) \in \dd{R}^{2}$, 
\begin{eqnarray}
\label{eqn:varphi pi}
{\varphi}(\eta_{0},\eta_{1}\vc{1}) =  \sum_{n = 0}^{\infty}\sum_{\vc{h} \in {\sr H}} [\vc{\pi}_{n}]_{\vc{h}} e^{\eta_{0}n + \eta_{1} \vc{h}\vc{1}}.
\end{eqnarray}
From \eqn{level 0} and \eqn{varphi pi}, we have
\begin{eqnarray}
\label{eqn:varphi}
\vc{\pi}_{0} \vc{y} \le  {\varphi}(0,  \log \rho^{-1}\vc{1}).
\end{eqnarray}
By \lem{domain D}, we have ${\varphi}(0,  \log \rho^{-1}\vc{1}) < \infty$, and the left hand side of \eqn{varphi} is finite.
Thus, the conditions in \pro{sufficient conditions} are satisfied.
This completes the proof of \thr{JSQ main result}.

\section{The proof of \lem{domain D}}
\setnewcounter
\label{sec:Proof of Domain}
The main object of this section is to prove \lem{domain D}. 
Let 
\begin{eqnarray*}
{\sr D}^{(2)} = \{(\eta_{0},\eta_{1}) \in \mathbb{R}^{2};(\eta_{0},\eta_{1}\vc{1}) \in {\sr D}\}.
\end{eqnarray*}
That is, ${\sr D}^{(2)}$ is the projection ${\sr D}$ to the two dimensional hyper plane.
Then, for $(\eta_{0},\eta_{1}) \in \mathbb{R}^{2}$, $\varphi(\eta_{0},\eta_{1}\vc{1}) < \infty$ is equivalent to $(\eta_{0},\eta_{1}) \in {\sr D}^{(2)}$.
Thus, we will show that $(\eta_{0},\eta_{1}) \in {\sr D}^{(2)}$ under the condition \eqn{condition the domain D}.
We iteratively find a sequence of vectors $\vc{\zeta}_{1}, \vc{\zeta}_{2}, \ldots, \vc{\zeta}_{m}$ for some $m \ge 1$ such that $\vc{\zeta}_{\ell} \in {\sr D}^{(2)}$ for $1 \le \ell \le m$ and $(\eta_{0},\eta_{1}) \le  \vc{\zeta}_{\ell}$.
Our approach is similar to \cite{KM 2010, Miyazawa 2010}. 
One may think that we can still use the two dimensional iteration.
However, things are not so simple because of the $k+1$-dimensional nature of the stationary distribution.
In particular, we must consider the $k+1$ dimensional moment generating function when we expand the confirmed region in the direction of $\eta_{1}$-axis.
To overcome this issue, we prepare some technical lemmas.

\subsection{The first step for expanding the confirmed region}
\label{sec:Stationary inequality}
 We consider to iteratively expand the confirmed region for $\varphi(\vc{\theta}) < \infty$ by using \lem{stationary inequality}. For this, it is important to suitably choose $\sr{A}$ for \lem{stationary inequality}. For each $V \in {\sr K}$, let
\begin{eqnarray*}
  \sr{G}(V) = \{U \in {\sr K}; V \cap U \neq \emptyset\},
\end{eqnarray*}
and denote $\sr{K} \setminus \sr{G}(V)$ by $\sr{G}^{c}(V)$. It follows from \lem{stationary inequality} with ${\sr A} = {\sr G}(V)$ that
\begin{eqnarray}
\label{eqn:condition (a1)'}
&&\gamma_{+U}(\vc{\theta}) < 1, \gamma_{0U}(\vc{\theta}) < 1, \,\, \qquad U \in {\sr G}^{c}(V), \\
\label{eqn:condition (a2)'}
&&\varphi_{+U}(\vc{\theta}) < \infty, \varphi_{0U}(\vc{\theta}) < \infty, \quad U \in {\sr G}(V),
\end{eqnarray}
imply that $\varphi(\vc{\theta}) < \infty$.

We next consider to minimize the number of the conditions in \eqn{condition (a2)'} to be verified by restricting to the case that $|V| = 1$.
To this end, we introduce some notations. For each $i=0,1$, let $\{\zeta_{i,j}; i =0,1, j \in J\}$ be a $2 \times k$ matrix. For  $V \in {\sr K}$,
 let $\vc{\zeta}(V)$ be the $k+1$-dimensional vector whose components are give by
\begin{eqnarray}
\label{eqn:theta V}
[\vc{\zeta}(V)]_{i} = \left \{
\begin{array}{ll}
\zeta_{0,|V|}, & i=0,\\
\zeta_{1,|V|}, & i \in V,\\
0,          &  i \in J \setminus V.
\end{array}
\right.
\end{eqnarray}
 	Then, we have the following lemma.
\begin{lemma}
\label{lem:varphi V}
{\rm 
 For each $i=0,1$, assume that $\zeta_{i,j}$ is nonincreasing in $j \in J$ and
\begin{itemize}
\item[(C1)]$\gamma_{+U}(\vc{\zeta}(V)) < 1$ and $\gamma_{0U}(\vc{\zeta}(V)) < 1$ for each $V \in \sr{K} \setminus \{J\}$, that is, $V \in \sr{K}$ satisfying $|V| \le k-1$, and $U \in \sr{G}^{c}(V)$.
\end{itemize}
  Then, the condition:
\begin{itemize}
\item [(C2)] $\varphi_{+U}(\vc{\zeta}(V)) < \infty$ for each $V \in {\sr K}\setminus \{J\}$ satisfying $|V| = 1$ and each $U \in \sr{G}(V)$
\end{itemize}
 implies that
\begin{eqnarray}
\label{eqn:zeta D (2)}
	(\zeta_{0,1},\zeta_{1,k-1}) \in {\sr D}^{(2)}.
\end{eqnarray}
}
\end{lemma}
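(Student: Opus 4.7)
\textit{Overview.} The plan is to prove, by induction on $n = |V|$, that $\varphi(\vc{\zeta}(V)) < \infty$ for every $V \in \sr{K}\setminus\{J\}$, and then to extract from this family the single-point finiteness $\varphi(\zeta_{0,1},\zeta_{1,k-1}\vc{1}) < \infty$, which is the same as $(\zeta_{0,1},\zeta_{1,k-1}) \in {\sr D}^{(2)}$. The workhorse is \lem{stationary inequality}(b), applied at $\vc{\theta} = \vc{\zeta}(V)$ with $\sr{A} = \sr{G}(V)$, so that the $\gamma$-condition \eqn{condition a1} reduces exactly to the portion of hypothesis (C1) indexed by this $V$, and only the $\varphi$-finiteness condition \eqn{condition a2} on $\sr{G}(V)$ has to be verified. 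Throughout, I use that $\varphi_{+U}$ and $\varphi_{0U}$ are moment generating functions of vectors taking values in $\dd{Z}_+^{k+1}$, hence coordinatewise nondecreasing in $\vc{\theta}$, together with the irrelevant-coordinate statement in \rem{Remark for varphiU and varphiU0}.

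\textit{Base case $|V| = 1$.} Fix $j \in J$ and take $V = \{j\}$. For every $U \in \sr{G}(\{j\})$ (equivalently, $j \in U$) the finiteness $\varphi_{+U}(\vc{\zeta}(\{j\})) < \infty$ is exactly (C2). The companion statement $\varphi_{0U}(\vc{\zeta}(\{j\})) < \infty$ is automatic: by \rem{Remark for varphiU and varphiU0}, $\varphi_{0U}$ depends only on $\theta_i$ for $i \in J\setminus U$, and since $\{j\}\subseteq U$ these coordinates of $\vc{\zeta}(\{j\})$ all vanish, whence $\varphi_{0U}(\vc{\zeta}(\{j\})) = \dd{P}(\vc{Z}\in{\sr S}_{0U}) \le 1$. \lem{stationary inequality}(b) then yields $\varphi(\vc{\zeta}(\{j\})) < \infty$ for each $j$.

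\textit{Inductive step.} Assume $\varphi(\vc{\zeta}(V')) < \infty$ for all $V' \in \sr{K}\setminus\{J\}$ with $|V'| < n$, and take $V$ with $|V| = n$, where $2 \le n \le k-1$. For each $U \in \sr{G}(V)$, put $V' := V\setminus U$ if this is nonempty, and otherwise ($V \subseteq U$) take $V' := \{j\}$ for any $j \in V$. In both cases $|V'| < n$, and the nonincreasing assumption on $\zeta_{i,j}$ gives $\vc{\zeta}(V) \le \vc{\zeta}(V')$ on the coordinates $\theta_0$ and $\theta_i$ for $i \in J\setminus U$ — which, by \rem{Remark for varphiU and varphiU0}, are precisely the coordinates on which $\varphi_{+U}$ depends. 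Coordinatewise monotonicity of $\varphi_{+U}$ therefore gives $\varphi_{+U}(\vc{\zeta}(V)) \le \varphi_{+U}(\vc{\zeta}(V')) \le \varphi(\vc{\zeta}(V')) < \infty$; the same comparison, restricted further to exclude $\theta_0$, handles $\varphi_{0U}$. \lem{stationary inequality}(b) then yields $\varphi(\vc{\zeta}(V)) < \infty$, and the induction closes at $n = k-1$.

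\textit{Concluding step and main obstacle.} It remains to deduce $\varphi(\zeta_{0,1},\zeta_{1,k-1}\vc{1}) < \infty$. If $\zeta_{1,k-1}\le 0$ this is immediate, since then $(\zeta_{0,1},\zeta_{1,k-1}\vc{1}) \le \vc{\zeta}(\{j\})$ componentwise for any $j$, and monotonicity of $\varphi$ closes the argument. The essential case is $\zeta_{1,k-1} > 0$: the target point coincides with no $\vc{\zeta}(V)$ and is not dominated by any such point (the $i$-th coordinate of the target is $\zeta_{1,k-1}>0$ for every $i$, while $\vc{\zeta}(V)$ vanishes outside $V$). The plan here is to apply \lem{stationary inequality}(b) once more, now at the target point, with an $\sr{A}$ tailored to the situation: $\varphi_{0U}$ at the target is handled for all $U$ by monotonicity against $\vc{\zeta}(J\setminus U)$, and $\varphi_{+U}$ at the target is handled by monotonicity against $\vc{\zeta}(\{i\})$ (with $\{i\}=J\setminus U$) when $|U|\ge k-1$. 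The remaining $\varphi_{+U}$ with $|U| \le k-2$ — for which no single $\vc{\zeta}(V)$ dominates the target on $\theta_0$ \emph{and} on all coordinates $i\in J\setminus U$ simultaneously — must be controlled by a H\"older-type splitting of $\exp(\zeta_{1,k-1}\sum_{i\notin U}Y_i)$ against the finite single-coordinate generating functions $\varphi_{+U}(\vc{\zeta}(\{i\}))$ produced by the induction, while the $\gamma$-conditions on $\sr{K}\setminus\sr{A}$ are verified directly from \eqn{moment X}--\eqn{moment X0} using the bounds on $\zeta_{0,1}$ and $\zeta_{1,k-1}$ implicit in (C1). This last verification is the main obstacle: identifying the correct $\sr{A}$, and arranging the H\"older splitting so that every factor lies in the region already known to be in the convergence domain, both hinge delicately on the relative magnitudes of $\zeta_{0,1}$, $\zeta_{1,1}$, and $\zeta_{1,k-1}$ dictated by the hypotheses, and this is where the specific form of (C1) must be exploited.
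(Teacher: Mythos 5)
Your base case and inductive step are correct and match the paper's argument in structure; in fact your version is a little more careful. The paper's stated induction hypothesis keeps only $\varphi_{+U}(\vc{\zeta}(V))<\infty$ for $U\in\sr{G}(V)$, yet the inductive comparison passes to $V''=V'\setminus U'$, for which $U'\notin\sr{G}(V'')$, so the hypothesis does not literally apply. Your choice to carry the full statement $\varphi(\vc{\zeta}(V))<\infty$ through the induction (which dominates $\varphi_{+U}(\vc{\zeta}(V))$ for \emph{every} $U$, not only $U\in\sr{G}(V)$) repairs this cleanly. So the first two thirds of your proposal are the paper's argument, done right.

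Where you go off the rails is the ``main obstacle.'' You are correct that when $\zeta_{1,k-1}>0$ no single $\vc{\zeta}(V)$ dominates $(\zeta_{0,1},\zeta_{1,k-1}\vc{1})$ on the coordinates $\{0\}\cup(J\setminus U)$ once $|U|\le k-2$, so coordinatewise domination cannot directly give $\varphi(\zeta_{0,1},\zeta_{1,k-1}\vc{1})<\infty$. But the paper does not attempt that: its proof opens by decomposing $\varphi(\zeta_{0,k-1},\zeta_{1,k-1}\vc{1})$, i.e.\ it targets the \emph{weaker} conclusion $(\zeta_{0,k-1},\zeta_{1,k-1})\in\sr{D}^{(2)}$ (there is an apparent typo $\zeta_{0,1}$ vs.\ $\zeta_{0,k-1}$ between the lemma statement and its proof). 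For that target no H\"older splitting and no further invocation of \lem{stationary inequality} is needed. By \eqn{moment decomposition} and \rem{Remark for varphiU and varphiU0}, $\varphi_{+U}(\zeta_{0,k-1},\zeta_{1,k-1}\vc{1})$ depends only on $\theta_0=\zeta_{0,k-1}$ and on $\theta_i=\zeta_{1,k-1}$ for $i\in J\setminus U$; taking $V=J\setminus U$ (for $U\ne J$) one has $|V|=k-|U|\le k-1$, so $\zeta_{0,k-|U|}\ge\zeta_{0,k-1}$ and $\zeta_{1,k-|U|}\ge\zeta_{1,k-1}$ by the nonincreasing hypothesis, hence
\begin{eqnarray*}
\varphi_{+U}(\zeta_{0,k-1},\zeta_{1,k-1}\vc{1}) \le \varphi_{+U}(\vc{\zeta}(J\setminus U)) \le \varphi(\vc{\zeta}(J\setminus U)) < \infty,
\end{eqnarray*}
and likewise for $\varphi_{0U}$ (which ignores $\theta_0$); the term $U=J$ involves only $\theta_0$ and is bounded by $\varphi(\vc{\zeta}(\{j\}))$ for any $j$. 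Summing over $U$ gives $(\zeta_{0,k-1},\zeta_{1,k-1})\in\sr{D}^{(2)}$. This weaker conclusion is what is actually used: in \lem{direction eta1} the lemma is invoked with $\zeta_{0,j}=\eta_0$ constant in $j$, so $\zeta_{0,1}=\zeta_{0,k-1}$ and the distinction evaporates. Your H\"older-type programme for the literal $\zeta_{0,1}$ claim is not in the paper and, as you yourself note, is not carried through; the proposal as written therefore leaves the lemma unproved, when the intended (and sufficient) conclusion follows in two lines from your own induction.
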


\begin{remark}
\label{rem:Condition D1}
{\rm
	For $V \in {\sr K} \setminus \{J\}$ such that $|V| = 1$, $U \in {\sr G}(V)$ if and only if $U =V$, and therefore, for this $U$, $\varphi_{+U}(\vc{\zeta}(V))$ does not depend on the elements of $\vc{\zeta}(V)$ except for first entry, i.e., $\zeta_{0,1}$.
}
\end{remark}

\begin{proof}
For $(\zeta_{0,k-1},\zeta_{1,k-1}) \in \mathbb{R}^{2}$ , from \eqn{moment decomposition}, we have
\begin{eqnarray}
\label{eqn:decomposition 2}
	{\varphi}(\zeta_{0,k-1}, \zeta_{1,k-1}\vc{1})
   = \sum_{U \in {\sr K}}
        \left(\varphi_{0U}(\zeta_{0,k-1}, \zeta_{1,k-1}\vc{1}) + \varphi_{+U}(\zeta_{0,k-1}, \zeta_{1,k-1}\vc{1})\right).
\end{eqnarray}
  Since (C1) holds for $V \in \sr{K} \setminus \{J\}$, \eqn{zeta D (2)} is obtained if we verify
\begin{eqnarray}
\label{eqn:zeta V 2}
  \varphi_{+U}(\vc{\zeta}(V)) < \infty, \qquad V \in \sr{K} \setminus \{J\}, \; U \in \sr{G}(V),
\end{eqnarray}
   by \lem{stationary inequality}.
  
Assume the condition (C2). We inductively verify \eqn{zeta V 2} on the value of $|V|$, where $1 \le |V| \le k-1$. \eqn{zeta V 2} holds for $|V| = 1$ by (C2).
	For a fixed $\ell$, where $1 < \ell < k-1$, we assume that, for $V \in {\sr K} \mbox{ satisfying } 1 \le |V| \le {\ell}$,
\begin{eqnarray}
\label{eqn:Induction on |V|}
 \varphi_{+U}(\vc{\zeta}(V)) < \infty, \quad \varphi_{0U}(\vc{\zeta}(V)) < \infty, \qquad U \in \sr{G}(V).
\end{eqnarray}
	If we can show that \eqn{Induction on |V|} for $V \in {\sr K} \mbox{ such that } |V| = \ell + 1$, then the induction is completed, and therefore we have \eqn{zeta V 2}.

Arbitrarily choose $U' \in \sr{G}(V')$ for $V' \in \sr{K}$ satisfying $|V'| = \ell + 1$. We recall that the expectations in $\varphi_{+U'}$ and $\varphi_{0U'}$ are taken over the events
 $\{Y_{i}=0 ; i \in U'\}$ and $\{M=0\} \cap \{Y_{i}=0 ; i \in U'\}$, respectively. Hence, we have
\begin{eqnarray}
\label{eqn:Equation when V'}
	\varphi_{+U'}(\vc{\zeta}(V')) \le \varphi_{+U'}(\vc{\zeta}(V' \setminus U')), \quad
	\varphi_{0U'}(\vc{\zeta}(V')) \le \varphi_{0U'}(\vc{\zeta}(V' \setminus U')), 
\end{eqnarray}
 since $\zeta_{0,|V'|} \le \zeta_{0,|V' \setminus U'|}$ and $\zeta_{1,|V'|} \le \zeta_{1,|V' \setminus U'|}$ by the nonincreasing assumption (see also \eqn{theta V}).
	We note that $|V' \setminus U'| \le {\ell}$ for $U' \in \sr{G}(V')$ since $U' \cap V' \not= \phi$.
Thus, from the induction assumption \eqn{Induction on |V|} for $|V| \le \ell$, \eqn{Induction on |V|} is satisfied for $|V| = \ell+1$.
	This completes the proof of the lemma.
\end{proof}

From this lemma, we have the following fact, which will be used to expand the confirmed region.
\begin{lemma}
{\rm
\label{lem:direction eta1}
If $(\eta_{0},0) \in {\sr D}^{(2)}$ for $\eta_{0} > 0$, then $(\eta_{0},\frac{1}{k-1}\eta_{0}) \in {\sr D}^{(2)}$.
}
\end{lemma}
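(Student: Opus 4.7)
The plan is to apply \lem{varphi V} with the explicit choice
\[
  \zeta_{0,j} = \eta_{0}, \qquad \zeta_{1,j} = \eta_{0}/j, \qquad j = 1,2,\ldots,k-1.
\]
Both sequences are nonincreasing in $j$ (the first is constant, and the second strictly decreases because $\eta_{0} > 0$), so the monotonicity assumption of \lem{varphi V} is satisfied. Moreover $(\zeta_{0,1}, \zeta_{1,k-1}) = (\eta_{0}, \eta_{0}/(k-1))$, so the conclusion $(\zeta_{0,1}, \zeta_{1,k-1}) \in \sr{D}^{(2)}$ is exactly what we want. It remains to verify conditions (C1) and (C2) of \lem{varphi V}.

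For (C2), take $V = \{v\}$ with $|V|=1$ and $U \in \sr{G}(V)$, so $v \in U$. The vector $\vc{\zeta}(V)$ has $\eta_{0}$ at coordinates $0$ and $v$ and zeros elsewhere. On the event $\{\vc{Z} \in \sr{S}_{+U}\}$ we have $Y_{j} = 0$ for all $j \in U$, and in particular $Y_{v} = 0$. Hence
\[
  \varphi_{+U}(\vc{\zeta}(V)) = \dd{E}\bigl[e^{\eta_{0} M}\,1(\vc{Z} \in \sr{S}_{+U})\bigr] \le \dd{E}[e^{\eta_{0} M}] = \varphi(\eta_{0}, \vc{0}) < \infty,
\]
where the final finiteness is the hypothesis $(\eta_{0},0) \in \sr{D}^{(2)}$.

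For (C1), fix $V \in \sr{K}\setminus\{J\}$ with $|V|=v$ and $U \in \sr{G}^{c}(V)$, so $U \cap V = \emptyset$ and therefore $U \subseteq J \setminus V$. The crucial algebraic feature of the chosen matrix is the identity $\zeta_{0,v} = v\,\zeta_{1,v}$, which forces the ``upward'' $\lambda$-exponent $\zeta_{0,v} - \sum_{j \in J\setminus U}(\vc{\zeta}(V))_{j} = \eta_{0} - v(\eta_{0}/v) = 0$ and, for each $i \in U$, the ``downward'' $\mu_{i}$-exponent $-\zeta_{0,v} + \sum_{j\in J\setminus\{i\}}(\vc{\zeta}(V))_{j} = -\eta_{0} + \eta_{0} = 0$ to vanish. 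Plugging into \eqn{moment X} and \eqn{moment X0}, each of the four subcases ($\gamma_{+U}$ or $\gamma_{0U}$, combined with $|U|=1$ or $|U|\ge 2$) collapses to an expression of the form
\[
  \lambda + \sum_{j \in V} \mu_{j} e^{-\eta_{0}/v} + \sum_{j \in J\setminus V} \mu_{j}
  < \lambda + \sum_{j \in J} \mu_{j} = 1,
\]
the strict inequality holding because $e^{-\eta_{0}/v} < 1$ (as $\eta_{0} > 0$) and $V \ne \emptyset$, and the equality being the normalization \eqn{Uniform}. This establishes (C1).

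The main obstacle is purely bookkeeping rather than conceptual: the four subcases of (C1) must each be written out by hand, and for $|U|\ge 2$ one has to keep track of where the $\zeta_{1,v}$-entries of $\vc{\zeta}(V)$ sit (they are confined to $V \subseteq J \setminus U$, which is why the argument of the downward $\mu_{i}$-exponent reduces to the full sum $v\zeta_{1,v}$). The choice $\zeta_{0,j} = j\,\zeta_{1,j}$ is engineered precisely so that the potentially large $\lambda$-jump and upward-$\mu_{i}$ exponents cancel exactly, leaving only strictly sub-unit terms. Once (C1) and (C2) are verified, \lem{varphi V} yields $(\eta_{0}, \eta_{0}/(k-1)) \in \sr{D}^{(2)}$.
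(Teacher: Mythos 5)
Your proposal is correct and follows the same route as the paper: it invokes Lemma~\ref{lem:varphi V} with the same choice $\zeta_{0,j}=\eta_0$, $\zeta_{1,j}=\eta_0/j$, and verifies (C1) and (C2) by the same cancellation $\zeta_{0,v}=v\,\zeta_{1,v}$ that zeroes out the $\lambda$- and downward-$\mu_i$-exponents, leaving only $\lambda+\sum_{j\in V}\mu_j e^{-\eta_0/v}+\sum_{j\in J\setminus V}\mu_j<1$. If anything, your verification of (C2) is slightly more careful than the paper's: the paper appeals to Remark~\ref{rem:Condition D1}, whose ``if and only if $U=V$'' phrasing is not literally correct (for $|V|=1$, $\sr{G}(V)$ contains every superset of $V$), whereas you handle all $U\in\sr{G}(V)$ directly by noting that $Y_v=0$ on $\sr{S}_{+U}$ reduces the exponent to $\eta_0 M$ regardless of $U$.
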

\begin{proof}
We will use \lem{varphi V}.
For this, let $\zeta_{0,j} = \eta_{0}$ and $\zeta_{1,j} = \frac{1}{j} \eta_{0}$. Then, for $i=0,1$, $\zeta_{i,j}$ is nonincreasing in $j$. Moreover, from \rem{Condition D1},  we have (C2) from our assumption.
If we can verify (C1), then for $\zeta_{0,j} = \eta_{0}$ and $\zeta_{1,j} = \frac{1}{j} \eta_{0}$, all conditions of \lem{varphi V} are satisfied, and therefore, $(\eta_{0},\frac{1}{k-1}\eta_{0}) \in {\sr D}^{(2)}$. 

In what follows, we check the condition (C1).
For this, we consider the case $U \in {\sr G}^{c}(V)$. We recall that $V \cap U = \emptyset$. Thus, for $j \in J$, $\zeta_{0,j} = \eta_{0}$ and $\zeta_{1,j} = \frac{1}{j} \eta_{0}$,
\begin{eqnarray*}
\begin{array}{llll}
\displaystyle{[\vc{\zeta(V)}]_{i}}  &=& \displaystyle{\left([\vc{\zeta}(V)]_{i} 1(i \in V) + [\vc{\zeta}(V)]_{i} 1(i \in J \setminus V) \right) = 0}, & \forall i \in U,\\
\displaystyle{\sum_{i \in J \setminus U} [\vc{\zeta(V)}]_{i}}  &=& \displaystyle{\sum_{i \in J \setminus U} \left([\vc{\zeta}(V)]_{i} 1(i \in V) + [\vc{\zeta}(V)]_{i} 1(i \in J \setminus V) \right) = {\eta_{0}}}, &|U|= 1,\\
\displaystyle{\sum_{j \in J \setminus\{i\}} [\vc{\zeta}(V)]_{j}} &=& \displaystyle{\sum_{j \in J \setminus \{i\}} \left([\vc{\zeta}(V)]_{j} 1(j \in V) + [\vc{\zeta}(V)]_{j} 1(j \in J \setminus V) \right) = {\eta_{0}}}, & \forall i \in U,\\
 \displaystyle{\sum_{i \in J \setminus U} \mu_{i} e^{-[\vc{\zeta}(V)]_{i}}} &=&  \displaystyle{\sum_{i \in J \setminus U} \mu_{i}\left(e^{-[\vc{\zeta}(V)]_{i}} 1(i \in V) + e^{-[\vc{\zeta}(V)]_{i}} 1(i \notin V) \right)}\\
   &=& \displaystyle{\sum_{i \in V} \mu_{i} e^{- \frac{1}{|V|}\eta_{0}} + \sum_{i \in J \setminus(U \cup V)} \mu_{i}.}
\end{array}
\end{eqnarray*}
For any $i$ satisfying $U = \{i\}$, that is $|U| = 1$, substituting these equations into \eqn{moment X} and \eqn{moment X0}, we have 

\begin{eqnarray*}
\gamma_{+U}(\vc{\zeta}(V)) &=& 
\lambda e^{\zeta_{0,|V|} - \sum_{j \in J \setminus U} [\vc{\zeta}(V)]_{j}}
+ \sum_{j \in J \setminus U} \mu_{j} e^{-[\zeta(|V|)]_{i}} 
+\mu_{i} e^{-\zeta_{0,|V|} + \sum_{j \in J \setminus \{i\}} [\vc{\zeta}(V)]_{j}} \\
&=& \lambda + \sum_{j \in V} \mu_{j} e^{-\frac{1}{|V|} \eta_{0}} + \sum_{j \in J \setminus (U \cup V)} \mu_{j} + \mu_{i} \\
&<& \lambda + \sum_{j \in V} \mu_{j} + \sum_{j \in J \setminus (U \cup V)} \mu_{j} + \mu_{i} \\
&=& \lambda + \sum_{j=1}^{k} \mu_{j} \\
&=& 1,\\
\gamma_{0U}(\vc{\zeta}(V)) &=& \lambda e^{\zeta_{0,|V|} - \sum_{j \in J \setminus U} [\vc{\zeta}(V)]_{j}}
+ \sum_{j \in J \setminus U} \mu_{j} e^{-[\zeta(|V|)]_{j}} 
+\mu_{i}\\
&=& \lambda + \sum_{j \in V} \mu_{j} e^{-\frac{1}{|V|} \eta_{0}} + \sum_{j \in J \setminus (U \cup V)} \mu_{j} + \mu_{i}\\
&<&1,
\end{eqnarray*}
where we used the assumption \eqn{Uniform}.
On the other hand, for $|U| \ge 2$, we have
\begin{eqnarray*}
\gamma_{+U}(\vc{\zeta}(V)) &=&
\sum_{i \in U} \frac{1}{|U|} \lambda e^{[\vc{\zeta}(V)]_{i}} 
+ \sum_{i \in J \setminus U} \mu_{i} e^{-[\vc{\zeta}(V)]_{i}} 
+\sum_{i \in U} \mu_{i} e^{-\zeta_{0,|V|} + \sum_{j \in J \setminus\{i\}} [\vc{\zeta}(V)]_{j}} \\
&=& \lambda + \sum_{i \in V} \mu_{i} e^{-\frac{1}{|V|} \eta_{0}} + \sum_{i \in J \setminus (U \cup V)} \mu_{i} + \sum_{i \in U} \mu_{i} \\
&<& 1.\\
\gamma_{0U}(\vc{\zeta}(V)) &=& \sum_{i \in U} \frac{1}{|U|} \lambda e^{[\vc{\zeta}(V)]_{i}} 
+ \sum_{i \in J \setminus U} \mu_{i} e^{-[\vc{\zeta}(V)]_{i}} 
+\sum_{i \in U} \mu_{i}\\
&=& \lambda + \sum_{i \in V} \mu_{i} e^{-\frac{1}{|V|} \eta_{0}} + \sum_{i \in J \setminus (U \cup V)} \mu_{i} + \sum_{i \in U} \mu_{i}\\
&<& 1.
\end{eqnarray*}
Hence, for each fixed $V$, we have (C1). This completes the proof.
\end{proof}

\subsection{Iterations for expansion}
\label{sec:Third step}

 We next consider two dimensional marginals of the moment generating functions \eqn{moment X} and \eqn{moment X0}. For each $U \in {\sr K}$ and $(\eta_{1},\eta_{2}) \in \mathbb{R}^{2}$, we define the moment generating functions $\gamma_{+U}^{(2)}$ and $\gamma_{0U}^{(2)}$ as 
\begin{eqnarray*}
\begin{array}{llll}
\gamma_{+U}^{(2)}(\eta_{0},\eta_{1}) = \gamma_{+U}(\eta_{0},\eta_{1}\vc{1}), \quad \gamma_{0U}^{(2)}(\eta_{0},\eta_{1}) = \gamma_{0U}(\eta_{0},\eta_{1}\vc{1}).

\end{array}
\end{eqnarray*}
For $U \in {\sr K}$, let
\begin{eqnarray*}
&&{\Gamma}_{+U}^{(2)} = \{\vc{\eta} = (\eta_{0},\eta_{1})\in \dd{R}^{2}; \gamma_{+U}^{(2)}(\vc{\eta}) \le 1 \}, \\
&&\partial {\Gamma}_{+U}^{(2)} = \{\vc{\eta} = (\eta_{0},\eta_{1}) \in \dd{R}^{2}; \gamma_{+U}^{(2)}(\vc{\eta}) = 1 \}.
\end{eqnarray*}
Then, we have the following facts.
\begin{proposition}
\label{pro:moment properties}
{\rm
Under the assumption of \thr{JSQ main result}, we have the following properties.
\begin{mylist}{0}
\item[(i)] ${\Gamma}_{+U}^{(2)}$ is a convex set in $\mathbb{R}^{2}$ for $U \in {\sr K}$.
\item[(ii)] $\{\eta_{i} \in \mathbb{R} ; (\eta_{0},\eta_{1}) \in \partial \Gamma_{+U}^{(2)}\}$ is bounded for each fixed $\eta_{1-i}$ and $i=0,1$ and $U \in {\sr K}$ such that $|U| = 1$.
\item[(iii)] $(0,\eta_{1}) \in \partial \Gamma_{+U}^{(2)}$ for some  $\eta_{1} > 0$. and $U \in {\sr K}$ satisfying $|U| = 1$.
\item[(iv)]For $U \in {\sr K}$ such that $|U| \ge 2$, $\{\eta_{1} \in \mathbb{R} ; (\eta_{0},\eta_{1}) \in \partial \Gamma_{+U}^{(2)}, \exists \eta_{0} \in \mathbb{R} \}$ is bounded from above.
\item[(v)] $\partial {\Gamma}_{+U}^{(2)}$ intersects at $(0,0)$ and $(\log \rho^{-k},\log \rho^{-1})$ for each $U \in {\sr K}$. 
\end{mylist}
}
\end{proposition}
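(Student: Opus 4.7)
The plan is to substitute $\theta_{0} = \eta_{0}$ and $\theta_{i} = \eta_{1}$ for $i \in J$ into \eqn{moment X}, obtaining for $|U| = 1$, $U = \{i\}$,
\begin{eqnarray*}
\gamma_{+U}^{(2)}(\eta_{0},\eta_{1}) = \lambda e^{\eta_{0} - (k-1)\eta_{1}} + \sum_{j \ne i}\mu_{j} e^{-\eta_{1}} + \mu_{i} e^{-\eta_{0} + (k-1)\eta_{1}},
\end{eqnarray*}
and for $|U| \ge 2$,
\begin{eqnarray*}
\gamma_{+U}^{(2)}(\eta_{0},\eta_{1}) = \lambda e^{\eta_{1}} + \sum_{j \in J \setminus U}\mu_{j} e^{-\eta_{1}} + \Bigl(\sum_{j \in U}\mu_{j}\Bigr) e^{-\eta_{0} + (k-1)\eta_{1}}.
\end{eqnarray*}
With these explicit forms in hand, (i) is immediate since $\gamma_{+U}^{(2)}$ is a finite positive combination of exponentials of linear functions, hence convex, so its sublevel set $\Gamma_{+U}^{(2)}$ is convex. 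Assertion (v) reduces to direct substitution of $(0,0)$ and $(\log\rho^{-k},\log\rho^{-1})$, using the uniformization \eqn{Uniform} and the identity $\rho \sum_{j} \mu_{j} = \lambda$.

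For (ii), I would fix one variable and inspect which exponential terms dominate in the other. When $|U| = 1$, the first and third terms supply a positive and a negative exponential in $\eta_{0}$, so $\gamma_{+U}^{(2)} \to \infty$ as $\eta_{0} \to \pm\infty$; likewise the third term provides $e^{(k-1)\eta_{1}}$ and the first provides $e^{-(k-1)\eta_{1}}$, forcing blow-up as $\eta_{1} \to \pm\infty$. This confines $\partial \Gamma_{+U}^{(2)}$ to a bounded slab in each coordinate. For (iv), the term $\lambda e^{\eta_{1}}$ in the $|U| \ge 2$ expression is independent of $\eta_{0}$, so any $(\eta_{0},\eta_{1}) \in \partial\Gamma_{+U}^{(2)}$ satisfies $\lambda e^{\eta_{1}} \le 1$, giving the uniform upper bound $\eta_{1} \le -\log\lambda$.

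The main obstacle is (iii), because with heterogeneous servers one cannot rely on a symmetric choice of $U$. My plan is to take $U = \{i^{*}\}$ with $i^{*} \in \arg\min_{j \in J}\mu_{j}$ and study the one-variable convex function $\eta_{1} \mapsto \gamma_{+U}^{(2)}(0,\eta_{1})$. It equals $1$ at $\eta_{1} = 0$ by (v) and diverges to $+\infty$ as $\eta_{1} \to +\infty$ through the term $\mu_{i^{*}} e^{(k-1)\eta_{1}}$; hence it has a second root in $(0,\infty)$ provided its derivative at $\eta_{1} = 0$ is strictly negative. A direct computation gives
\begin{eqnarray*}
\left.\frac{d}{d\eta_{1}}\gamma_{+U}^{(2)}(0,\eta_{1})\right|_{\eta_{1}=0} = (k-1)(\mu_{i^{*}} - \lambda) - \sum_{j \ne i^{*}}\mu_{j} = k\mu_{i^{*}} - (k-1)\lambda - \sum_{j}\mu_{j}.
\end{eqnarray*}
The minimality inequality $k\mu_{i^{*}} \le \sum_{j} \mu_{j}$ combined with $\lambda > 0$ yields strict negativity, and convexity together with divergence at $+\infty$ then furnish the required $\eta_{1} > 0$ with $(0,\eta_{1}) \in \partial \Gamma_{+U}^{(2)}$.
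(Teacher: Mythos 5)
Your proof is correct and essentially completes the paper's argument, which the paper keeps to a bare minimum: it supplies only the convexity idea behind (i) and the substitution for (v), and declares (ii)--(iv) obvious. Your explicit computation of $\gamma_{+U}^{(2)}(\eta_{0},\eta_{1})$ and the dominant-term checks for (ii) and (iv) are routine and sound, and your (i) and (v) coincide with the paper's hints. The substantive addition is (iii), where the paper's casualness hides a real subtlety once servers are heterogeneous: writing $g_{i}(\eta_{1}) := \gamma_{+\{i\}}^{(2)}(0,\eta_{1})$, one has $g_{i}(0)=1$ and $g_{i}'(0)=k\mu_{i}-(k-1)\lambda-\sum_{j}\mu_{j}$, and this derivative need not be negative for every $i$; for instance $k=2$, $\mu_{1}=10$, $\mu_{2}=1$, $\lambda=5$ satisfies \eqn{Stability condition} yet gives $g_{1}'(0)>0$, so $g_{1}$ is increasing on $[0,\infty)$ and has no positive root. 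Your choice $i^{*}\in\arg\min_{j\in J}\mu_{j}$ forces $k\mu_{i^{*}}\le\sum_{j}\mu_{j}$, hence $g_{i^{*}}'(0)\le-(k-1)\lambda<0$, and convexity plus divergence at $+\infty$ then furnish the required second root; this is exactly the right fix and matches the existential phrasing of (iii). One side remark worth recording, though not a defect of your proof: the proof of \lem{properties eta ell} later invokes $\gamma_{+U}^{(2)}(0,\ol{\eta}_{1}^{(0)})<1$ for \emph{all} $U$ with $|U|=1$, which the example above shows can fail for small $\ol{\eta}_{1}^{(0)}$ in the heterogeneous case; that appears to be a gap in the paper's downstream use of (iii) rather than in (iii) itself.
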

We obviously see (i)--(iv). For example, (i) is obtained because $\gamma_{+U}^{(2)}$ is a convex function. Furthermore, (v) is obtained by letting $\theta_{0} = \log \rho^{-k}$ and $\theta_{1} = \theta_{2} \dots = \theta_{k} = \log \rho^{-1}$ in \eqn{moment X}. So far, we omit a detailed proof of this proposition.
In \fig{The way to expand the finite domain}, for $k=2$, we depict the convex curve $\partial {\Gamma}_{+U}^{(2)}$ and the region where the condition \eqn{condition the domain D} holds.
\begin{figure}[htbp]
	\centering
	\includegraphics[height=0.25\textheight]{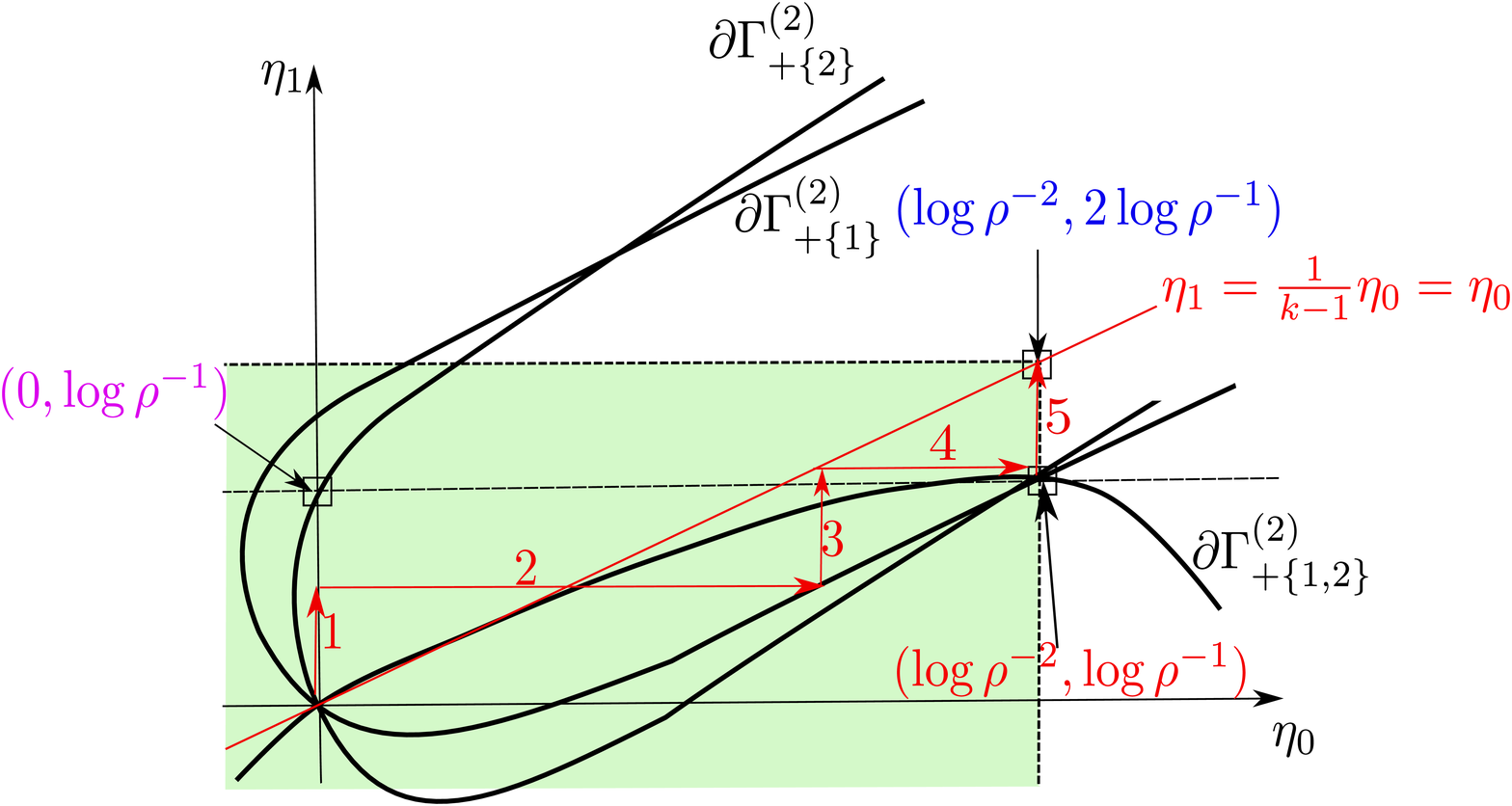}
	\caption{The domain and iteration to expand the finite domain}
	\label{fig:The way to expand the finite domain}
\end{figure}

%


\if0
we consider conditions (a1) and (a2) in \lem{stationary inequality}.
In Lemmas \lemt{direction for theta1} and \lemt{direction for theta1 2}, we find a $\vc{\theta}$ which the condition (C2) is satisfied.
In \lem{varphi V},  under a certain condition.
Applying these lemmas, we will prove \lem{domain D} in the next section.
\fi

\if0
From the symmetric property \eqn{symmetric 2}, we have, for $\eta_{0},\eta_{1} \in \dd{R}$, $\ell = 1,2,\ldots,k$ and $U,U' \in {\sr K} (= 2^{J} \setminus \emptyset)$ such that $|U| = |U'|$,
\begin{eqnarray}
\label{eqn:symmetric moment varphiU}
&&\varphi_{+U}(\eta_{0},\eta_{1}\vc{1}) = \varphi_{U'}(\eta_{0},\eta_{1}\vc{1}), \\
\label{eqn:symmetric moment varphiU0}
&&\varphi_{0U}(\eta_{0},\eta_{1} \vc{1}) = \varphi^{(b)}_{U'}(\eta_{0},\eta_{1} \vc{1}).
\end{eqnarray}
\fi

%
%


Using \lem{direction eta1},  we iteratively find nondecreasing point such that the moment generating function $\varphi(\eta_{0},\eta_{1}\vc{1})$ is finite.
We illustrate our iteration in \fig{The way to expand the finite domain}.
For this, we recall that there exists $\ol{\eta}_{1}^{(0)} > 0$ such that $\varphi(0,\ol{\eta}_{1}^{(0)}\vc{1})$ is finite by \lem{exponential tightness}.
Let $\ol{\vc{\eta}}^{(0)} = (0,\ol{\eta}_{1}^{(0)})$ and for $\ell = 1,2,\dots $, 
\begin{eqnarray}
\label{eqn:eta 0 ell}
&&\vc{\eta}^{(\ell)} = (\eta_{0}^{(\ell)}, \eta_{1}^{(\ell)}) \equiv \arg\sup_{(\eta_{0},\eta_{1})} \{\eta_{0};\gamma_{+U}^{(2)}(\vc{\eta}) < 1, \forall U \in {\sr K}, \eta_{1} \le \ol{\eta}_{1}^{(\ell -1)} \},\\
\label{eqn:eta 1 ell}
&&\ol{\vc{\eta}}^{(\ell)} = (\ol{\eta}_{0}^{(\ell)},\ol{\eta}_{1}^{(\ell)}) \equiv \left(\eta_{0}^{(\ell)}, \frac{1}{k-1} \eta_{0}^{(\ell)}\right).
\end{eqnarray}
Then, from \pro{moment properties}, we have the following property.
\begin{lemma}
\label{lem:properties eta ell}
{\rm
$\ol{\vc{\eta}}^{(\ell)}$ is nondecreasing in $\ell = 1,2,\ldots$.
}
\end{lemma}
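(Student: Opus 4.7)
The plan is a straightforward induction on $\ell$. Since by definition $\ol{\vc{\eta}}^{(\ell)} = (\eta_0^{(\ell)}, \eta_0^{(\ell)}/(k-1))$ for $\ell \ge 1$, the claim reduces to showing that the scalar sequence $\{\eta_0^{(\ell)}\}_{\ell \ge 1}$ is nondecreasing. The driving observation is that, for any $h \in \mathbb{R}$, the feasible set
$$
F(h) := \{(\eta_0, \eta_1) \in \mathbb{R}^2 : \gamma_{+U}^{(2)}(\eta_0, \eta_1) < 1 \text{ for all } U \in \sr{K},\ \eta_1 \le h\}
$$
is trivially nondecreasing in $h$, and hence so is $h \mapsto \sup_{F(h)} \eta_0$. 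Since $\eta_0^{(\ell)} = \sup_{F(\ol{\eta}_1^{(\ell-1)})} \eta_0$ by definition, the inductive step is immediate: assuming $\ol{\eta}_1^{(\ell-1)} \le \ol{\eta}_1^{(\ell)}$, one gets $\eta_0^{(\ell)} \le \eta_0^{(\ell+1)}$, and dividing by $k-1$ yields $\ol{\eta}_1^{(\ell)} \le \ol{\eta}_1^{(\ell+1)}$, closing the induction.

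The substantive part is the base case $\ol{\eta}_1^{(0)} \le \ol{\eta}_1^{(1)}$. For this I would exploit \pro{moment properties}(i) and (v): each $\Gamma_{+U}^{(2)}$ is convex and its boundary passes through the two common points $(0,0)$ and $(\log \rho^{-k}, \log \rho^{-1})$. A short direct computation from \eqn{moment X} shows that, along the parameterized segment $(tk \log \rho^{-1},\, t \log \rho^{-1})$, $t \in [0,1]$, the function $\gamma_{+U}^{(2)}$ simplifies to the same expression $\lambda \rho^{-t} + (1-\lambda)\rho^{t}$ for every $U \in \sr{K}$ — regardless of $|U|$, the terms rearrange into $\lambda e^{\alpha}+(1-\lambda)e^{-\alpha}$ with $\alpha = t\log\rho^{-1}$. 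This function equals $1$ at the endpoints $t = 0, 1$ and is strictly less than $1$ in the interior by strict convexity in $t$ (since $\rho \neq 1$). Consequently the open segment lies in the interior of $\bigcap_{U \in \sr{K}} \Gamma_{+U}^{(2)}$, so for any $\ol{\eta}_1^{(0)} \in (0, \log\rho^{-1})$ the point $(k \ol{\eta}_1^{(0)}, \ol{\eta}_1^{(0)})$ belongs to $F(\ol{\eta}_1^{(0)})$, giving $\eta_0^{(1)} \ge k\ol{\eta}_1^{(0)}$ and hence
$$
\ol{\eta}_1^{(1)} = \frac{\eta_0^{(1)}}{k-1} \ge \frac{k}{k-1}\,\ol{\eta}_1^{(0)} > \ol{\eta}_1^{(0)}.
$$
Since \lem{exponential tightness} only guarantees the existence of some $\ol{\eta}_1^{(0)} > 0$ and $\varphi(0, \cdot\, \vc{1})$ is monotone, we are free to shrink $\ol{\eta}_1^{(0)}$ to lie in $(0, \log\rho^{-1})$.

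I expect the inductive step to cost nothing — it is pure set monotonicity of the sup. The real obstacle is identifying the distinguished diagonal segment joining the two universal boundary points and observing that $\gamma_{+U}^{(2)}$ reduces to the same one-parameter expression on it regardless of $U$. This collapse is precisely what delivers the factor $k/(k-1) > 1$ that makes the iteration strictly expand on the first step, and therefore bootstraps the induction.
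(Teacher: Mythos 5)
Your proof is correct, and it reaches the same underlying geometric fact as the paper — that the chord from $(0,0)$ to $(\log\rho^{-k},\log\rho^{-1})$, of slope $1/k$, lies in the common interior of all $\Gamma_{+U}^{(2)}$ — but it organizes the argument differently. The paper works non-inductively: it shows, directly for every $\ell$, that the maximizing point satisfies $\eta_1^{(\ell)}\le\frac{1}{k}\eta_0^{(\ell)}<\frac{1}{k-1}\eta_0^{(\ell)}=\ol{\eta}_1^{(\ell)}$, using convexity together with \eqn{eta 0 ell2}, \eqn{eta 00}, and the bound $\eta_0^{(\ell)}\le\log\rho^{-k}$; from this it deduces $\eta_0^{(\ell)}\le\eta_0^{(\ell+1)}$ because $\vc{\eta}^{(\ell)}$ already satisfies the constraint defining step $\ell+1$. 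You instead run an explicit induction: the inductive step is pure monotonicity of $h\mapsto\sup_{F(h)}\eta_0$, and you invoke the chord computation only once, at the base case, to get $\ol{\eta}_1^{(1)}\ge\frac{k}{k-1}\ol{\eta}_1^{(0)}>\ol{\eta}_1^{(0)}$. Your route is slightly more elementary in the inductive step but needs one extra move the paper does not make explicit: you shrink $\ol{\eta}_1^{(0)}$ into $(0,\log\rho^{-1})$, which is harmless because \lem{exponential tightness} only guarantees some $\ol{\eta}_1^{(0)}>0$ and a smaller choice is still admissible for the downstream use in \lem{expand the domain D}. It is also worth noting that your explicit verification that $\gamma_{+U}^{(2)}$ collapses to the single expression $\lambda\rho^{-t}+(1-\lambda)\rho^t$ on the diagonal, for every $U\in\sr{K}$ regardless of $|U|$, is the cleanest way to see why the chord is interior to all the $\Gamma_{+U}^{(2)}$ simultaneously; the paper leans on \pro{moment properties} (i) and (v) for the same conclusion without writing out the collapse.
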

\begin{proof}
  From \eqn{eta 1 ell}, it is sufficient to show that $\eta_{0}^{(\ell)} \le \eta_{0}^{(\ell + 1)}$ for all $\ell = 1,2,\ldots$.
We first note that, from (i) and (iii) of \pro{moment properties}, 
\begin{eqnarray*}
\label{eqn:gamma epsilon}
\gamma_{+U}^{(2)}(0,\ol{\eta}_{1}^{(0)}) < 1, \qquad \forall U \in {\sr K} \mbox{ s.t } |U| = 1.
\end{eqnarray*}
From this inequality and the properties (i), (iv) and (v), for all $\ell = 1,2,\ldots$, it is easy to see that $0 < \eta_{0}^{(\ell)} \le \log \rho^{-k}$. 
In addition, by (i), (ii), (iv) and (v) of \pro{moment properties}, 
\begin{eqnarray}
\label{eqn:eta 0 ell2}
\vc{\eta}^{(\ell)} = \arg\sup\{\eta_{0};\gamma_{+U}^{(2)}(\vc{\eta}) < 1, U \in {\sr K},|U| = 1, 0 < \eta_{0} \le \log \rho^{-k}, \eta_{1} \le \eta_{1}^{(\ell-1)}\}.
\end{eqnarray}
From \eqn{eta 0 ell2}, if we can obtain $\eta_{1}^{(\ell)} < \ol{\eta}_{1}^{(\ell)}$ for all $\ell = 1,2,\ldots$, we have $\eta_{0}^{(\ell)} \le \eta_{0}^{(\ell + 1)}$, and the proof is completed. From convexity and boundedness of $\gamma_{+U}^{(2)}$ (see properties (i) and (ii) in \pro{moment properties}), we obtain 
\begin{eqnarray}
\label{eqn:eta 00}
&&\gamma_{+U}^{(2)}(\vc{\eta}^{(\ell)}) = 1, \quad  \exists U \in {\sr K}, |U| = 1,
\end{eqnarray}
for $\ell = 1,2,\ldots$. Thus, from convexity of $\gamma_{+U}^{(2)}$, \eqn{eta 0 ell2}, \eqn{eta 00} and $0 < \eta_{0}^{(\ell)} \le \log \rho^{-k}$, we obtain,
\begin{eqnarray}
\label{eqn:relation eta 0 and eta 1}
\eta_{1}^{(\ell)} \le \frac{1}{k} \eta_{0}^{(\ell)} < \frac{1}{k-1} \eta_{0}^{(\ell)} = \ol{\eta}_{1}^{(\ell)},
\end{eqnarray}
for all $\ell = 1,2,\ldots$. 
\end{proof}

From \lem{properties eta ell} and \eqn{eta 0 ell2}, we can see that $\ol{\eta}_{0}^{(\ell)}$ and $\ol{\eta}_{1}^{(\ell)}$ converge to some points. Denote them by $\ol{\eta}_{0}^{(\infty)}$ and $\ol{\eta}_{1}^{(\infty)}$, i.e.,
\begin{eqnarray*}
\ol{\eta}_{0}^{(\infty)} = \lim_{\ell \to \infty} \ol{\eta}_{0}^{(\ell)}, \quad 
\ol{\eta}_{1}^{(\infty)} = \lim_{\ell \to \infty} \ol{\eta}_{1}^{(\ell)}.
\end{eqnarray*}
Then, the finite domain of the moment generating function is obtained as follows.

\begin{lemma}
\label{lem:expand the domain D}
{\rm
If $(\eta_{0},\eta_{1}) < (\ol{\eta}_{0}^{(\infty)}, \ol{\eta}_{1}^{(\infty)})$ , then $(\eta_{0},\eta_{1}) \in {\sr D}^{(2)}$.
}
\end{lemma}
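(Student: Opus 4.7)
The plan is to establish by induction on $\ell \ge 0$ the strengthening: every $(\eta_{0}, \eta_{1}) \in \dd{R}^{2}$ with $\eta_{0} < \ol{\eta}_{0}^{(\ell)}$ and $\eta_{1} < \ol{\eta}_{1}^{(\ell)}$ lies in ${\sr D}^{(2)}$. Since $\ol{\vc{\eta}}^{(\ell)} \uparrow (\ol{\eta}_{0}^{(\infty)}, \ol{\eta}_{1}^{(\infty)})$ by \lem{properties eta ell}, any point strictly below the limit is strictly dominated by some $\ol{\vc{\eta}}^{(\ell)}$, which yields the lemma.

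The base case $\ell = 0$ is immediate from \lem{exponential tightness}, which gives $\ol{\vc{\eta}}^{(0)} = (0, \ol{\eta}_{1}^{(0)}) \in {\sr D}^{(2)}$; monotonicity of $\varphi(\eta_{0}, \eta_{1} \vc{1})$ in each argument (since $M \ge 0$ and each $Y_{i} \ge 0$) transfers finiteness to every dominated point.

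For the inductive step, fix $(\eta_{0}, \eta_{1}) < \ol{\vc{\eta}}^{(\ell)} = (\eta_{0}^{(\ell)}, \tfrac{1}{k-1}\eta_{0}^{(\ell)})$. The relation $(k-1)\eta_{1} < \eta_{0}^{(\ell)}$ lets me choose $\eta_{0}'$ with $\max(\eta_{0}, (k-1)\eta_{1}) < \eta_{0}' < \eta_{0}^{(\ell)}$, so that $(\eta_{0}, \eta_{1}) \le (\eta_{0}', \tfrac{1}{k-1}\eta_{0}')$. By monotonicity of $\varphi$ and \lem{direction eta1}, the step reduces to proving $(\eta_{0}', 0) \in {\sr D}^{(2)}$. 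I would do this by applying \lem{stationary inequality}(a) at $\vc{\theta} = (\eta_{0}', \eta_{1}^{*} \vc{1})$ for a suitable $\eta_{1}^{*} \in [0, \ol{\eta}_{1}^{(\ell-1)})$ with $\gamma_{+U}^{(2)}(\eta_{0}', \eta_{1}^{*}) < 1$ for every $U \in {\sr K}$. The inductive hypothesis (or, when $\ell = 1$, \lem{exponential tightness} itself) then yields $(0, \eta_{1}^{*}) \in {\sr D}^{(2)}$; since $\varphi_{0U}$ is independent of $\theta_{0}$ and of $\theta_{i}$ for $i \in U$ (\rem{Remark for varphiU and varphiU0}), this forces $\varphi_{0U}(\vc{\theta}) \le \varphi(0, \eta_{1}^{*} \vc{1}) < \infty$ for every $U \in {\sr K}$. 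The hypotheses of \lem{stationary inequality}(a) are then in place, and part (c) gives $\varphi(\vc{\theta}) < \infty$; monotonicity finally yields $(\eta_{0}', 0) \in {\sr D}^{(2)}$.

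The main obstacle is producing the auxiliary $\eta_{1}^{*}$ with the \emph{strict} inequality $\eta_{1}^{*} < \ol{\eta}_{1}^{(\ell-1)}$, as the $\arg\sup$ in \eqn{eta 0 ell} only supplies such a point weakly. To sharpen, I would use the strict gap $\eta_{0}' < \eta_{0}^{(\ell)}$, the convexity of each $\Gamma_{+U}^{(2)}$ (\pro{moment properties}(i)), and the fact that at the optimizer $(\eta_{0}^{(\ell)}, \eta_{1}^{(\ell)})$ only some single-element set $|U| = 1$ attains $\gamma_{+U}^{(2)} = 1$ (see \eqn{eta 00}), combining these in a short convex-combination and continuity argument with an interior reference point of $\{\gamma_{+U}^{(2)} < 1 \ \forall U\}$ supplied by \lem{exponential tightness}. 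The degenerate case $\ol{\eta}_{1}^{(\ell-1)} = \ol{\eta}_{1}^{(\ell)}$ corresponds to the iteration having already stabilized and is handled by the previous inductive level.
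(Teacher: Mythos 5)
Your proposal follows essentially the same route as the paper: an induction on $\ell$ that combines part (a) of \lem{stationary inequality} with \lem{direction eta1}, using \rem{Remark for varphiU and varphiU0} to obtain finiteness of $\varphi_{0U}$ from the previous iterate. The only organizational difference is the order of operations. The paper first establishes $(\eta_{0},\eta_{1}) \in {\sr D}^{(2)}$ for $(\eta_{0},\eta_{1}) < (\eta_{0}^{(\ell)},\eta_{1}^{(\ell)})$ directly via \lem{stationary inequality}(a), then applies \lem{direction eta1} to any $\eta_{0} < \eta_{0}^{(\ell)}$ to reach $(\eta_{0},\tfrac{1}{k-1}\eta_{0})$; you instead peel off \lem{direction eta1} first (via monotonicity and the chosen $\eta_{0}'$) and save \lem{stationary inequality}(a) for the residual claim $(\eta_{0}',0) \in {\sr D}^{(2)}$. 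Both arrangements rest on the same pair of lemmas and yield the same conclusion, so this is a cosmetic difference. The subtlety you flag about making $\eta_{1}^{*}$ strictly below $\ol{\eta}_{1}^{(\ell-1)}$ is genuine: the $\arg\sup$ in \eqn{eta 0 ell} only enforces the weak inequality $\eta_{1}^{(\ell)} \le \ol{\eta}_{1}^{(\ell-1)}$, whereas the inductive hypothesis covers only the open rectangle below $\ol{\vc{\eta}}^{(\ell-1)}$. The paper quietly sidesteps this at $\ell = 1$ because \lem{exponential tightness} gives finiteness \emph{at} $(0,\ol{\eta}_{1}^{(0)})$ itself, and for $\ell \ge 2$ it leans on ``similarly''; your convexity/continuity sharpening with an interior reference point and \eqn{relation eta 0 and eta 1} is a reasonable way to make the general step airtight. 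Overall this is a correct proposal using the paper's own tools.
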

\begin{proof}
By induction for $\ell$, we will show that, for any $\ell = 1,2,\dots$,
\begin{eqnarray}
\label{eqn:induction_ell}
(\eta_{0},\eta_{1}) \in {\sr D}^{(2)}, \qquad (\eta_{0},\eta_{1}) < (\ol{\eta}_{0}^{(\ell)}, \ol{\eta}_{1}^{(\ell)}).
\end{eqnarray}
For $\ell = 1$, we first show that $(\eta_{0},\eta_{1}) \in {\sr D}^{(2)}$ for $(\eta_{0},\eta_{1}) < ({\eta}_{0}^{(1)},{\eta}_{1}^{(1)})$.
 For all $U \in {\sr K}$, $\varphi_{0U}(\eta_{0}^{(1)},\eta_{1}^{(1)}\vc{1})$ is finite since the value of $\varphi_{0U}$ does not depend on the first entry (see  \rem{Remark for varphiU and varphiU0}) and $\eta_{1}^{(1)} \le \ol{\eta}_{1}^{(0)}$.
From this and \eqn{eta 0 ell}, for sufficiently small $\epsilon_{0} ,\epsilon_{1} > 0$ and $(\eta_{0},\eta_{1}) = (\eta_{0}^{(1)} -\epsilon_{0},\eta_{1}^{(1)} - \epsilon_{1})$, we can use (a) of  \lem{stationary inequality}, and we have $\varphi(\eta_{0}^{(1)} -\epsilon_{0},(\eta_{1}^{(1)} - \epsilon_{1})\vc{1}) < \infty$. Hence, $(\eta_{0},\eta_{1}) \in {\sr D}^{(2)}$ for $(\eta_{0}, \eta_{1}) < (\eta_{0}^{(1)},\eta_{1}^{(1)})$.
We are ready to obtain \eqn{induction_ell} for $\ell = 1$.
We recall that
\begin{eqnarray}
\label{eqn:over_eta_1}
(\ol{\eta}_{0}^{(1)},\ol{\eta}_{1}^{(1)}) = \left(\eta_{0}^{(1)}, \frac{1}{k-1} \eta_{0}^{(1)}\right).
\end{eqnarray}
In addition,  the conditions in \lem{direction eta1} hold with $\eta_{0} < \eta_{0}^{(1)}$ since $\eta_{1}^{(1)} > 0$. Thus, from \lem{direction eta1} and \eqn{over_eta_1}, we have \eqn{induction_ell} for $\ell = 1$.

Suppose that \eqn{induction_ell} satisfies for $\ell = \ell' \ge 2$.
Similarly to the arguments for $\ell = 1$, we have $(\eta_{0},\eta_{1}) \in {\sr D}^{(2)}$ for $(\eta_{0},\eta_{1}) < (\eta_{0}^{(\ell' + 1)},\eta_{1}^{(\ell' + 1)})$. Thus, conditions in \lem{direction eta1} hold with $\eta_{0} < \eta_{0}^{(\ell' + 1)}$ again, and we obtain \eqn{induction_ell} for $\ell = \ell'+1$.
\end{proof}
\if0
\begin{lemma}
{\rm
For $0 < \eta_{0}^{*} < \log \rho^{-k}$, if 
\begin{eqnarray*}
\varphi \left(\eta_{0}^{*},\frac{1}{k-1}\eta_{0}^{*}\right) < \infty,
\end{eqnarray*}
then there exists $\alpha$ such that 
\begin{eqnarray}
\label{eqn:eta 0 condition 1}
&&\eta_{0}^{*} < \alpha < \log \rho^{-k},\\ 
\label{eqn:eta 0 condition 2}
&&\varphi\left(\alpha,\frac{1}{k-1} \eta_{0}^{*}\right) < \infty.
\end{eqnarray}
}
\end{lemma}
\begin{proof}
From \rem{Remark for varphiU and varphiU0}, note that $\varphi^{(b)}(\eta_{0},\eta_{1})$ is finite for $\eta_{1} < \frac{1}{k-1}\eta_{0}^{*}$.
For $U \in {\sr K}$ satisfying $|U| = 1$, 
substituting $\eta_{0} = \eta_{0}^{*}$ and $\eta_{1} = \frac{1}{k-1} \eta_{0}^{*}$ into \eqn{moment2 X}, we have, by stability condition \eqn{Stability condition} and our assumption $\eta_{0}^{*} > 0$,
\begin{eqnarray*}
\gamma_{+U}\left(\eta_{0}^{*},\frac{1}{k-1} \eta_{0}^{*}\vc{1}\right) =  \mu e^{-\eta_{0}^{*} + \eta_{0}^{*}}
+ (k-1) \mu e^{-\frac{1}{k-1}\eta_{0}^{*}} 
+ \lambda e^{\eta_{0}^{*} - \eta_{0}^{*}} < 1.
\end{eqnarray*}
Thus, from properties (i) and (ii) in \pro{moment properties}, for a small $\epsilon > 0$, there exists $\beta > \eta_{0}^{*}$ such that 
\begin{eqnarray*}
\gamma_{+U}\left(\beta,\frac{1}{k-1}\eta_{0}^{*}\vc{1}\right) = 1 - \epsilon, \qquad |U| = 1.
\end{eqnarray*}
For this $\beta$, we consider the case $\eta_{0}^{*} < \beta < \log \rho^{-k}$. Then, from properties (iii) and (iv) in \pro{moment properties} repeatedly, it is easy to see that $\gamma_{+U}(\beta,\frac{1}{k-1} \eta_{0}^{*}\vc{1}) < 1$ for all $U \in {\sr K}$. Thus, $\varphi(\beta,\frac{1}{k-1} \eta_{0}^{*}\vc{1})$ is finite by \eqn{stationary inequality} in \lem{stationary inequality}.

We next suppose that $\beta > \log \rho^{-k}$. Then, we obviously have $\frac{1}{k-1}\eta_{0}^{*} > \log \rho^{-1}$. 
From \pro{moment properties} again, there exist small $\epsilon_{0}$ and $\epsilon_{1}$ such that 
\begin{eqnarray*}
\begin{array}{llll}
&&\varphi_{0U}(\log \rho^{-k} - \epsilon_1, (\log \rho^{-1} - \epsilon_2)\vc{1}) < \infty, & \forall U \in {\sr K},\\
&&\gamma_{+U}(\log \rho^{-k} - \epsilon_1, (\log \rho^{-1} - \epsilon_2)\vc{1}) < 1, &\forall U \in {\sr K}.
\end{array}
\end{eqnarray*}
Using the stationary inequality \eqn{stationary inequality}, we have $\varphi(\log \rho^{-k} - \epsilon_1, (\log \rho^{-1} - \epsilon_2)\vc{1}) < \infty$.
Thus, for $\alpha = \max(\beta,\log \rho^{-k} - \epsilon_1)$, we have \eqn{eta 0 condition 1} and \eqn{eta 0 condition 2}.
\end{proof}
\fi

\subsection{The last step of the proof}
\label{sec:Final step}
By \lem{expand the domain D} and $\ol{\eta}_{1}^{(\infty)} = \frac{1}{k-1} \ol{\eta}_{0}^{(\infty)}$, it is sufficient to prove $\ol{\eta}_{0}^{(\infty)} = \log \rho^{-k}$. 
From \eqn{eta 0 ell2}, we clearly have 
\begin{eqnarray}
\label{eqn:eta 0 infty less than log rho k}
\eta_{0}^{(\infty)} \le \log \rho^{-k}.
\end{eqnarray}
Suppose that $\ol{\eta}_{0}^{(\infty)} = \alpha < \log \rho^{-k}$.
Then, it is easy to see that  
\begin{eqnarray*}
\label{eqn:limit_alpha}
&&\gamma_{+U}^{(2)}(\alpha, \eta_{1}^{(\ell')}) = 1, \qquad \exists U \in {\sr K}, |U| = 1,\\
&&\gamma_{+U'}^{(2)}(\alpha, \eta_{1}^{(\ell')}) < 1, \qquad \forall U' \neq U, |U'|= 1,
\end{eqnarray*}
for some $\ell' \ge 1$.
Thus, from \eqn{relation eta 0 and eta 1}, we have 
\begin{eqnarray*}
\frac{1}{k}\alpha \ge \eta_{1}^{(\ell')}.
\end{eqnarray*}
Moreover, from \pro{moment properties} and \eqn{eta 0 ell2},  for any $U \in {\sr K}$,  there exists a small $\epsilon > 0$ such that
\begin{eqnarray*}
\gamma_{+U}^{(2)}(\alpha - \epsilon, \eta_{1}^{(\ell')}) < 1.
\end{eqnarray*}
From \eqn{eta 1 ell}, 
\begin{eqnarray*}
\ol{\eta}_{1}^{(\ell')} = \frac{1}{k-1} (\alpha - \epsilon) > \frac{1}{k} \alpha \ge \eta_{1}^{(\ell')},
\end{eqnarray*}
since $\epsilon$ is sufficiently small.
Thus, by \pro{moment properties}, there exist $\delta > 0$ and $\eta_{1} \le \eta_{1}^{(\ell')}$ such that 
\begin{eqnarray*}
\gamma_{+U}^{(2)}(\alpha + \delta, \eta_{1}) < 1, \qquad  \forall U \in {\sr K}.
\end{eqnarray*}
This is a contradiction. From this and \eqn{eta 0 infty less than log rho k}, we obtain $\ol{\eta}_{0}^{(\infty)} = \log \rho^{-k}$.

\section{Concluding remarks}
\setnewcounter
\label{sec:concluding remark}
First of all, we note that our assumption on the tie break can be relaxed. 
We have assumed that arriving customers choose one of the shortest queues with equal probabilities. This assumption makes our arguments simpler, but is not essential. Namely,
for each configuration of the shortest queues, we can replace it by any distribution.

In this paper, for the $M/M$-JSQ with $k$ parallel queues, we obtained the exact tail asymptotics of the stationary distribution for the minimum queue length given the differences of queue length between each queue and minimum queue. 
It may be interesting to find $c_{\vc{h}}$ of \eqn{exact asymptotic JSQ} to see the joint distribution of the background state in the asymptotic formula. 
For this, we need to derive the left invariant vector $\vc{x}$ in \eqn{left A-vector} because $c_{\vc{h}}$ is proportional to the $\vc{h}$-th entry of $\vc{x}$. 
For $k=2$, this left invariant vector is obtained in \cite{HMZ 2007}. 
 However, this computation is difficult for the case $k \ge 3$. We leave it as an open problem.

In our proof of \thr{JSQ main result}, we obtained the subset of the convergence domain ${\sr D}$. This subset is still useful as we have seen in the proof of \cor{JSQ rough marginal}. However,
if we completely derive the domain ${\sr D}$,  we could do much better job. For example, we may have another type of tail asymptotics, e.g., joint queue length and marginal distributions.
However, this would be a hard problem since our reflecting random walk is multidimensional. This challenging problem is left for future work.

Another challenging problem is to generalize the $M/M$-JSQ to have dedicated streams. For $k=2$, the rough asymptotics of the minimum queue length have been completely obtained for this generalized model in \cite{Miyazawa two side}. We can use the present formulations by a QBD process and a reflecting random walk. However, even for $k=2$, the answer is very complicated because $A_{*}(z)$ of the QBD process may not be positive recurrent. Thus, the random walk approach may be more suitable for such a generalization.
 
Including this generalization, various modifications of the $M/M$-JSQ can be described by a multidimensional reflecting random walk with skip free jumps. Thus, it is very interesting to solve the tail asymptotic problems for a general multidimensional reflecting random walk. Some related results can be found in \cite{KM 2010, Miyazawa 2010}. The approach in this paper may be useful to get the rough and exact asymptotics of the multidimensional reflecting random walk.

\section*{Acknowledgements}
The authors would like to thank the referees for their valuable comments and suggestions.
This research was supported in part by Japan Society for the Promotion of Science under grant No.\ 24310115.

\appendix

\section{Proof of \lem{right invariant y}}
\setnewcounter
\label{app:Right}
 Let $S(\vc{h}) = \{i \in J ; \vc{h} \in {\sr H}, h_{i} = 0\}$, that is, the set of severs having minimum queue.
For its proof, we give detailed form of $A_{i}$ for $i=0, \pm1$.
For $\vc{h}, \vc{h}' \in {\sr H}$, we recall that $[A_{i}]_{\vc{h},\vc{h}'}$ is the $(\vc{h},\vc{h}')$-th block of $A_{i}$.

\begin{mylist}{2}
\item[(a)] Entries of $A_{+1}$ :  In this case, the minimum queue length increases by $1$. This implies $|S(\vc{h})| = 1$. If $S(\vc{h}) = \{i\}$, arriving customers join the queue $i$, and the difference of queue length between queue $j$ and the minimum queue decreases by $1$ for $j \neq i$.
 Hence, we have
\begin{eqnarray*}
[A_{+1}]_{\vc{h},\vc{h} - \vc{1} + \vcn{e}_{i}} = \lambda, \qquad \vc{h} \in {\sr H}, | S(\vc{h})| = 1, i \in  S(\vc{h}).
\end{eqnarray*}
\item[(b)]  Entries of $A_{0}$ : 
The level is unchanged. When a customer arrives, it is required that $|S(\vc{h})| \neq 1$ and the arriving customer joins queue $i \in S(\vc{h})$ with probability $|S(\vc{h})|^{-1}$, which changes the background  state from $\vc{h}$ to $\vc{h}+\vcn{e}_{i}$. 
Hence, we have 
\begin{eqnarray*}
[A_{0}]_{\vc{h},\vc{h} + \vcn{e}_{i}} = | S(\vc{h})|^{-1} \lambda, \qquad \vc{h} \in {\sr H}, | S(\vc{h})| \neq 1, i \in  S(\vc{h}).
\end{eqnarray*}
When a customer finishes service, it must be at queue $j \in J \setminus  S(\vc{h})$, by which the difference of queue length between queue $j$ and the shortest queue decreases by $1$. Hence, we have
\begin{eqnarray*}
[A_{0}]_{\vc{h},\vc{h} - \vcn{e}_{j}} =  \mu_{j}, \qquad \vc{h} \in {\sr H}, j \in J \setminus  S(\vc{h}).
\end{eqnarray*}
\item[(c)]  Entries of $A_{-1}$ : This case implies minimum queue length decreases by $1$. That is, service completes at queue $i \in  S(\vc{h})$,
by which the background state changes from $\vc{h}$ to $\vc{h} + \vc{1} - \vcn{e}_{i}$. Hence, we have
\begin{eqnarray*}
[A_{-1}]_{\vc{h},\vc{h} + \vc{1} - \vcn{e}_{i}} = \mu_{i}, \qquad \vc{h} \in {\sr H}, i \in  S(\vc{h}).
\end{eqnarray*}
\end{mylist}

\begin{proof*}{Proof of \lem{right invariant y}}
For $\vc{h} \in {\sr H}$, $\vc{y} = ({y}_{\vc{h}})$ and $ S(\vc{h}) \in {\sr K}$ satisfying $| S(\vc{h})| = 1$ and $i \in S(\vc{h})$, from (a), (b) and (c), the $\vc{h}$-th element of $A_{*}(z)\vc{y}$ is given by
\begin{eqnarray*}
[A_{*}(z)\vc{y}]_\vc{h} =
z^{-1}  \mu_{i}\vc{y}_{\vc{h} + \vc{1} - \vcn{e}_{i}}  + 
\sum_{j \neq i}  \mu_{j} \vc{y}_{\vc{h} - \vcn{e}_{j}} + 
z \lambda \vc{y}_{\vc{h} - \vc{1} + \vcn{e}_{i}}.
\end{eqnarray*}
Substituting $z = \rho^{-k}$ and ${y}_{\vc{h}} =  \rho^{-\vc{h}\vc{1}}$ into this equation, we have
\begin{eqnarray*}
[A_{*}(\rho^{-k})\vc{y}]_\vc{h} = 
&=& \rho^{k}  \mu_{i} \rho^{-\vc{h}\vc{1} - (k-1)} +   \sum_{j \neq i}\mu_{j} \rho^{-\vc{h}\vc{1} + 1} + \rho^{-k} \lambda \rho^{-\vc{h}\vc{1} + (k-1)}\\
&=& \rho^{-\vc{h}\vc{1}}\left(\sum_{j=1}^{k} \mu_{j} \rho + \lambda \rho^{-1} \right)\\ 
&=& {y}_{\vc{h}},
\end{eqnarray*}
where we use our assumption \eqn{Uniform} for the last equality.

We next consider $2 \le |S(\vc{h})|  \le k$.
Then, from (b) and (c), we have, for $\vc{h} \in {\sr H}$,
\begin{eqnarray*}
[A_{*}(z)\vc{y}]_\vc{h} =
\sum_{i \in  S(\vc{h})} z^{-1}\mu_{i} \vc{y}_{\vc{h} + \vc{1} - \vcn{e}_{i}}  + 
\sum_{j \in J \setminus  S(\vc{h})} \mu_{j} \vc{y}_{\vc{h} - \vcn{e}_{j}} + 
\sum_{i \in  S(\vc{h})} \frac{1}{| S(\vc{h})|} \lambda \vc{y}_{\vc{h}+ \vcn{e}_{i}}.
\end{eqnarray*}
Hence, we have 
\begin{eqnarray*}
[A_{*}(\rho^{-k})\vc{y}]_\vc{h} &=&   \sum_{i \in S(\vc{h})} \rho^{k} \mu_{i} \rho^{-\vc{h}\vc{1} - (k-1)} +  \sum_{j \notin S(\vc{h})} \mu_{j} \rho^{-\vc{h}\vc{1} + 1} + \lambda \rho^{-\vc{h}\vc{1} - 1}\\
&=&  \rho^{-\vc{h}\vc{1}} \left( \sum_{j=1}^{k} \mu_{j} \rho + \lambda \rho^{-1} \right) \\
&=& y_{\vc{h}}.
\end{eqnarray*}
\end{proof*}

\section{Proof of \lem{positivity}}
\setnewcounter
\label{app:proof of positive recurrent}

From the Foster's theorem (see e,g., \cite{Bremaud,Sean Meyn 2009}),  for some $\epsilon > 0$, it is sufficient to find a function $f$ and finite set ${\sr F} \subset {\sr H}$ such that
\begin{eqnarray}
\label{eqn:Lyapunov 1}
&&\inf_{\vc{h} \in {\sr H}} f(\vc{h}) > -\infty,\\
\label{eqn:Lyapunov 2}
&&\sum_{\vc{h}' \in {\sr H}} p_{\vc{h},\vc{h}'} f(\vc{h}') < \infty,  \quad \forall \vc{h} \in  {\sr F},\\
\label{eqn:Lyapunov 3}
&&\sum_{\vc{h}' \in {\sr H}} p_{\vc{h},\vc{h}'} f(\vc{h}') - f(\vc{h}) \le -\epsilon, \quad \forall \vc{h} \in {\sr H} \setminus {\sr F},
\end{eqnarray}
where $p_{\vc{h},\vc{h}'}$ are the ($\vc{h},\vc{h}'$)-th entry of $\Delta_{\vc{y}}^{-1} A_{*}(\rho^{-k}) \Delta_{\vc{y}}$. 
The entries of the stochastic matrix $\Delta_{\vc{y}}^{-1} A_{*}(\rho^{-k}) \Delta_{\vc{y}}$
are given by the following  forms.
\begin{itemize}
\item For $|S(\vc{h})| = 1$, 
\begin{eqnarray*}
p_{\vc{h},\vc{h}'} = \left \{ 
\begin{array}{ll}
\lambda \rho^{-1}, & \mbox{$\vc{h}' = \vc{h} - \vc{1} + \vcn{e}_{i}$, $i \in S(\vc{h})$},\\
\mu_{i} \rho, & \mbox{$\vc{h}' = \vc{h} + \vc{1} - \vcn{e}_{i}$, $i \in S(\vc{h})$}\\
              & \mbox{or $\vc{h}' =\vc{h} - \vcn{e}_{i}$, $i \in J \setminus S(\vc{h})$,}\\
0,                  & \mbox{otherwise.}
\end{array}
\right.
\end{eqnarray*}
\item For $2 \le |S(\vc{h})| \le k$,
\begin{eqnarray*}
p_{\vc{h},\vc{h}'} = \left \{ 
\begin{array}{ll}
\frac{\lambda \rho^{-1}}{|S(\vc{h})|}, & \mbox{$\vc{h}' = \vc{h} + \vcn{e}_{i}$, $i \in S(\vc{h})$, }\\
\mu_{i} \rho, & \mbox{$\vc{h}' = \vc{h} + \vc{1} - \vcn{e}_{i}$, $i \in S(\vc{h})$}\\
 & \mbox{or $\vc{h}' = \vc{h} - \vcn{e}_{i}$, $i \in J \setminus S(\vc{h})$,}\\
0,                  & \mbox{otherwise.}
\end{array}
\right.
\end{eqnarray*}
\end{itemize}

For $\vc{h} = (h_{1},h_{2},\ldots,h_{k}) \in {\sr H}$, let 
\begin{eqnarray*}
f(\vc{h}) = \frac{1}{2} \sum_{j=1}^{k-1} \sum_{m=j+1}^{k} (h_{j} - h_{m})^{2}.
\end{eqnarray*}
For this $f$, we obviously have \eqn{Lyapunov 1}. 
We note that $i \in S(\vc{h})$ implies $h_{i} = 0$.
For $i \in S(\vc{h})$ and $\vc{h}' = \vc{h} - \vc{1} + \vc{e}_{i}$ or $\vc{h}' = \vc{h} + \vc{e}_{i}$, we have

\begin{eqnarray*}
(h'_{j} - h'_{m})^{2}  =\left\{
\begin{array}{ll}
 (h_{j} - h_{m})^{2}, & j,m \neq i,\\
(h_{j} - h_{m})^{2} - (2h_{j}-1), & j < m = i,\\
(h_{j} - h_{m})^{2} - (2h_{m} - 1), & j = i < m.
\end{array}
\right.
\end{eqnarray*}
Thus, we have 
\begin{eqnarray*}
p_{\vc{h},\vc{h'}} f(\vc{h}') = &=& \frac{\lambda \rho^{-1}}{2|S(\vc{h})|} \left(\sum_{j=1}^{k-1} \sum_{m=j+1}^{k} (h_{j} - h_{m})^{2} - \sum_{j=1}^{i-1} \left( 2h_{j} -1 \right) - \sum_{m=i+1}^{k}\left( 2h_{m} - 1 \right)\right)\\
&=& \frac{\lambda \rho^{-1}}{|S(\vc{h})|} \left(f(\vc{h}) - \sum_{j=1}^{k} h_{j} + \frac{k-1}{2}\right).
\end{eqnarray*}
\if0
\begin{eqnarray*}
p_{\vc{h},\vc{h}'} f(\vc{h}') 
&=& \frac{\lambda \rho^{-1}}{2|S(\vc{h})|} \sum_{j=1}^{k-1} \sum_{m=j+1}^{k} (h'_{j} - h'_{m})^{2}\\
&=& \frac{\lambda \rho^{-1}}{2|S(\vc{h})|} \left(\sum_{j \neq i,k} \sum_{m=j+1}^{k} (h'_{j} - h'_{m})^{2} + \sum_{m=i+1}^{k} (h'_{i} - h'_{m})^{2}\right)\\
&=& \frac{\lambda \rho^{-1}}{2|S(\vc{h})|} \left(\sum_{j =1}^{i-1} \sum_{m=j+1}^{k} (h'_{j} - h'_{m})^{2} + \sum_{j = i+1}^{k-1} \sum_{m=j+1}^{k} (h'_{j} - h'_{m})^{2} + \sum_{m=i+1}^{k} (h'_{i} - h'_{m})^{2}\right)\\
&=& \frac{\lambda \rho^{-1}}{2|S(\vc{h})|} \left(\sum_{j =1}^{i-1} \left (\sum_{\substack{m > j \\ m \neq i}} (h'_{j} - h'_{m})^{2}  + (h'_{j} - h'_{i})^{2}\right) \right.\\
&& \qquad \qquad \qquad \left.+ \sum_{j = i+1}^{k-1} \sum_{m=j+1}^{k} (h'_{j} - h'_{m})^{2} + \sum_{m=i+1}^{k} (h'_{i} - h'_{m})^{2}\right)\\
&=& \frac{\lambda \rho^{-1}}{2|S(\vc{h})|} \left(\sum_{j =1}^{i-1} \left (\sum_{\substack{m > j \\ m \neq i}} (h_{j} - h_{m})^{2}  + (h_{j} - (h_{i}+1))^{2}\right) \right.\\
&& \qquad \qquad \qquad \left.+ \sum_{j = i+1}^{k-1} \sum_{m=j+1}^{k} ((h_{j} - h_{m})^{2} + \sum_{m=i+1}^{k} ((h_{i}+1) - h_{m})^{2}\right)\\
&=&\frac{\lambda \rho^{-1}}{2|S(\vc{h})|}\left(\sum_{j \neq i,k} \sum_{\substack{m > j \\ m \neq i}} (h_{j} - h_{m})^{2} + \sum_{j =1}^{i-1} (h_{j} - (h_{i} + 1))^{2} + \sum_{j=i+1}^{k}  (h_{i}+1 - h_{j})^{2}  \right) \\
&=& \frac{\lambda \rho^{-1}}{2|S(\vc{h})|} \left(\sum_{j=1}^{k-1} \sum_{m=j+1}^{k} (h_{j} - h_{m})^{2} - \sum_{j=1}^{i-1} 2h_{j} - \sum_{j=i+1}^{k} 2h_{j} + k-1\right)\\
&=& \frac{\lambda \rho^{-1}}{|S(\vc{h})|} \left(f(\vc{h}) - \sum_{j=1}^{k} h_{j} + \frac{k-1}{2}\right).
\end{eqnarray*}
\fi
Similarly, for $\vc{h'} = \vc{h} + \vc{1} - \vcn{e}_{i}$ and $i \in S(\vc{h})$,  
\begin{eqnarray*}
p_{\vc{h},\vc{h}'} f(\vc{h}') = \mu_{i} \rho \left(f(\vc{h}) + \sum_{j=1}^{k} h_{j} + \frac{k-1}{2}\right).
\end{eqnarray*}
For $\vc{h'} = \vc{h} - \vcn{e}_{i}$ and $i \in J \setminus S(\vc{h})$, we also obtain,
\begin{eqnarray*}
(h'_{j} - h'_{m})^{2} &=&  \left\{ 
\begin{array}{ll}
(h_{j} - h_{m})^{2}, & j,m \neq i,\\
(h_{j} - h_{m})^{2} + 2(h_{j} - h_{m}) + 1, & j < m =i,\\
(h_{j} - h_{m})^{2} + 2(h_{m} - h_{j}) + 1, & j=i < m,
\end{array}
\right.
\end{eqnarray*}
so we have the following inequality.
\begin{eqnarray*}
p_{\vc{h},\vc{h'}} f(\vc{h}') &=& \mu_{i} \rho \left(f(\vc{h}) + \sum_{j = 1}^{i-1} \left( h_{j} - h_{i} + \frac{1}{2} \right) + \sum_{m = i+1}^{k} \left( h_{m} - h_{i} + \frac{1}{2} \right) \right) \\
& \le& \mu_{i} \rho \left(f(\vc{h}) + \sum_{j = 1}^{k} h_{j} + \frac{k-1}{2} \right).
\end{eqnarray*}
From \eqn{Uniform}, we have
\begin{eqnarray*}
\sum_{\vc{h}' \in {\sr H}} p_{\vc{h},\vc{h}'} f(\vc{h}') &\le& \sum_{i \in S(\vc{h})}\frac{\lambda \rho^{-1}}{|S(\vc{h})|} \left(f(\vc{h}) - \sum_{j=1}^{k} h_{j} + \frac{k-1}{2}\right) \\
&& + \sum_{i \in S(\vc{h})} \mu_{i} \rho \left(f(\vc{h}) + \sum_{j=1}^{k} h_{j} + \frac{k-1}{2}\right)\\
&&+ \sum_{i \in J \setminus S(\vc{h})} \mu_{i} \rho \left(f(\vc{h}) + \sum_{j=1}^{k} h_{j} + \frac{k-1}{2}  \right)\\
&=& f(\vc{h}) + \left(\lambda - \sum_{j=1}^{k} \mu_{j} \right)\vc{h}\vc{1} + \frac{k-1}{2}.
\end{eqnarray*}
Thus, from the stability condition \eqn{Stability condition}, \eqn{Lyapunov 2} and \eqn{Lyapunov 3} hold with 
\begin{eqnarray*}
{\sr F} = \left\{\vc{h} \in \sr{H}; \vc{h}\vc{1} \le \frac{k-1 + \epsilon}{2(\sum_{i=1}^{k} \mu_{i} - \lambda)} \right\}.
\end{eqnarray*}
We complete the proof.

\section{Proof of \lem{exponential tightness}}
\setnewcounter
\label{app:exponential tightness}
 For the proof of the first statment in \lem{exponential tightness}, it suffices to  prove that there exists $\alpha$ such that 
\begin{eqnarray}
\label{eqn:alpha tightness}
\mathbb{E}(e^{\alpha \sqrt{\sum_{i=1}^{k}L_{i}^{2}}}) < \infty,
\end{eqnarray}
since 
$\mathbb{E}(e^{\frac{\alpha}{\sqrt{k}} \sum_{i=1}^{k} L_{i}}) \le 
\mathbb{E}(e^{\alpha \sqrt{\sum_{i=1}^{k}L_{i}^{2}}})$.
To prove \eqn{alpha tightness},  we consider a Lyapunov function such that
\begin{eqnarray*}
\label{eqn:Lyapunov exponential tight}
f(\vc{u}) = e^{\alpha \sqrt{\sum_{i=1}^{k} u_{i}^{2}}},
\end{eqnarray*}
for $\vc{u} = (u_{1},u_{2},\cdots,u_{k}) \in \mathbb{Z}_{+}^{k}$, and for each $\beta > 0$, define   
\begin{eqnarray*}
F_{\beta} = \left\{\vc{u} \in \mathbb{Z}_{+}^{k};\sum_{i=1}^{k} u_{i} \le \beta \right\}.
\end{eqnarray*}
Obviously, $F_{\beta}$ is a finite set. To verify \eqn{alpha tightness},  if we can show that
\begin{eqnarray}
\label{eqn:Lyapunov exponential tight2}
 \mathbb{E}(f(\vc{L}_{n+1}) | \vc{L}_{n} = \vc{u}) - f(\vc{u}) \le -c_{1}f(\vc{u}), \qquad \forall \vc{u} \in \mathbb{Z}_{+}^{k} \setminus F_{\beta},
\end{eqnarray}
for some  $c_{1} > 0$, then, for any $\vc{u} \in \mathbb{Z}_{+}^{k}$,  
\begin{eqnarray*}
\mathbb{E}(f(\vc{L}_{n+1}) | \vc{L}_{n} = \vc{u}) - (1 - c_{1})f(\vc{u}) \le s(\vc{u}),
\end{eqnarray*}
where $s$ is a function defined
\begin{eqnarray*}
s(\vc{u}) = (\mathbb{E}(f(\vc{L}_{n+1}) | \vc{L}_{n} = \vc{u}) - (1 - c_{1})f(\vc{u}))^{+} 1(\vc{u} \in F_{\beta}).
\end{eqnarray*}
Thus, by Theorem $14.3.7$ of \cite{Sean Meyn 2009}, we have
\begin{eqnarray*}
 c_{1}\sum_{\vc{u} \in \mathbb{Z}_{+}^{k}} f(\vc{u}) \mathbb{P}(\vc{L} = \vc{u}) =  c_{1}\mathbb{E}(e^{\alpha \sqrt{\sum_{i=1}^{k}L_{i}^{2}}}) \le \sum_{\vc{u} \in \mathbb{Z}_{+}^{k}} s(\vc{u}) \mathbb{P}(\vc{L} = \vc{u}) < \infty,
\end{eqnarray*}
which implies \eqn{alpha tightness}.
We will show that, for $0< \alpha  < \sqrt{k} \log \rho^{-1}$, we can find $\beta > 0$ such that \eqn{Lyapunov exponential tight2} holds.

 For $\vc{u} \in \mathbb{Z}_{+}$,  let $\vc{u}_{\ell} = (u_{\ell_{1}},u_{\ell_{2}}, \dots, u_{\ell_{k}})$ be a permutation of $\vc{u}$ satisfying
\begin{eqnarray}
\label{eqn:Lyapunov condition1}
&&u_{\ell_{1}} \le u_{\ell_{2}} \le \cdots \le u_{\ell_{k}}. 
\end{eqnarray}
We exclude the case $u_{\ell_{1}} = u_{\ell_{2}} = \cdots = u_{\ell_{k}} = 0$, that is, we assume the following condition.
\begin{eqnarray}
\label{eqn:Lyapunov condition2}
& 0 < u_{\ell_{j}}, &\exists j \in J. 
\end{eqnarray}
Then, for $j \in J$ satisfying \eqn{Lyapunov condition2}, it follow from the transition matrix of the original queue length process $\{\vc{L}_{\ell};\ell \in \mathbb{Z}_{+}\}$ that 
\begin{eqnarray*}
\mathbb{E}(f(\vc{L}_{n+1}) | \vc{L}_{n} = \vc{u}) - f(\vc{u}) &=& \lambda e^{\alpha \sqrt{(u_{\ell_{1}}+1)^{2}+ \sum_{i=2}^{k} u_{\ell_{i}}^{2}}} + \sum_{i = j}^{k} \mu_{\ell_{i}} e^{\alpha \sqrt{(u_{\ell_{i}}-1)^{2} + \sum_{m \neq i} u_{\ell_{m}}^{2}}} \\ 
&& + \sum_{i=1}^{j-1} \mu_{\ell_{i}} e^{\alpha \sqrt{\sum_{m=1}^{k} u_{\ell_{m}}^{2}}} - e^{\alpha \sqrt{\sum_{i=1}^{k} u_{\ell_{i}}^{2}}}\\
&=& \lambda e^{\alpha \sqrt{(u_{\ell_{1}}+1)^{2}+ \sum_{i=2}^{k} u_{\ell_{i}}^{2}}} + \sum_{i = j}^{k} \mu_{\ell_{i}} e^{\alpha \sqrt{(u_{\ell_{i}}-1)^{2} + \sum_{m \neq j} u_{\ell_{m}}^{2}}} \\ 
&& -\left(\lambda + \sum_{i=j}^{k}\mu_{\ell_{i}} \right)e^{\alpha \sqrt{\sum_{i=1}^{k} u_{\ell_{i}}^{2}}},
\end{eqnarray*}
where we have used \eqn{Uniform} to get the second equality.
Note that $\sqrt{1 + u} \le 1 + \frac{u}{2}$ for $u \ge -1$, so we have the following inequalities.
\begin{eqnarray*}
\sqrt{(u_{\ell_{1}}+1)^{2}+ \sum_{i=2}^{k} u_{\ell_{i}}^{2}} &=& r \sqrt{1 + \frac{1 + 2 u_{\ell_{1}}}{r^{2}}} \\
&\le& r + \frac{1 + 2 u_{\ell_{1}}}{2r}, \\
\sqrt{(u_{\ell_{i}} - 1)^{2}+ \sum_{m \neq i} u_{\ell_{m}}^{2}} &=& r\sqrt{1 + \frac{1 - 2 u_{\ell_{i}}}{ r^{2}}} \\
&\le& r + \frac{1 - 2 u_{\ell_{i}}}{2r},
\end{eqnarray*}
where $r = \sqrt{\sum_{i=1}^{k} u_{\ell_{i}}^{2}}$. Hence, we have 
\begin{eqnarray*}
\label{eqn:conditional expectation f}
\lefteqn{\mathbb{E}(f(\vc{L}_{n+1}) | \vc{L}_{n} = \vc{u})  - f(\vc{u}))}  \nonumber\\
&& \le \lambda e^{\alpha r + \frac{\alpha(1 + 2 u_{\ell_{1}})}{2r}} +  \sum_{i = j}^{k} \mu_{\ell_{i}} e^{\alpha r + \frac{\alpha(1 - 2 u_{\ell_{i}})}{2r}} -\left(\lambda + \sum_{i=j}^{k}\mu_{\ell_{i}} \right) e^{\alpha r} \nonumber\\
&& = e^{\alpha r} \left( e^{\frac{\alpha}{2r}} \left( \lambda e^{\frac{\alpha u_{\ell_{1}}}{r}} +  \sum_{i = j}^{k} \mu_{\ell_{i}} e^{-\frac{\alpha u_{\ell_{i}}}{r}} \right) - \left(\lambda + \sum_{i=j}^{k}\mu_{\ell_{i}} \right) \right) \nonumber\\
&& = f(\vc{u}) \left( e^{\frac{\alpha}{2r}} \left( \lambda e^{\frac{\alpha u_{\ell_{1}}}{r}} +  \sum_{i = j}^{k} \mu_{\ell_{\ell_{i}}} e^{-\frac{\alpha u_{\ell_{i}}}{r}} \right) - \left(\lambda + \sum_{i=j}^{k}\mu_{\ell_{i}} \right) \right).
\end{eqnarray*}
From this inequality, for a small $\alpha > 0$ and a large $\beta > 0$, it is enough to find $c_{1} > 0$ such that
\begin{eqnarray}
\label{eqn:Lyapunov sufficient}
 e^{\frac{\alpha}{2r}} \left( \lambda e^{\frac{\alpha u_{\ell_{1}}}{r}} +  \sum_{i = j}^{k} \mu_{\ell_{\ell_{i}}} e^{-\frac{\alpha u_{\ell_{i}}}{r}} \right) - \left(\lambda + \sum_{i=j}^{k}\mu_{\ell_{i}} \right) \le -c_{1}, \quad \forall \vc{u} \in \mathbb{Z}_{+} \setminus {\sr F}_{\beta}.
\end{eqnarray}
For this, for a small $\delta > 0$, we partition two cases $\frac{\alpha u_{\ell_{1}}}{r} < \delta$ and $\frac{\alpha u_{\ell_{1}}}{r} \ge \delta$.
From our assumption \eqn{Lyapunov condition1} and \eqn{Lyapunov condition2}, we have 
\begin{eqnarray}
\label{eqn:inequality u i}
&&0 \le \frac{u_{\ell_{i}}}{r} \le 1, \qquad i=1,2,\ldots,k,\\
\label{eqn:inequality u 1}
&&0 \le \frac{u_{\ell_{1}}}{r} \le  \frac{1}{\sqrt{k}},\\
\label{eqn:inequality u k}
&&\frac{1}{\sqrt{k}} \le \frac{u_{\ell_{k}}}{r} \le 1.
\end{eqnarray}
First assume that $\frac{\alpha u_{1}}{r} < \delta$. In this case, from \eqn{inequality u i} and \eqn{inequality u k}, we obtain
\begin{eqnarray*}
&&e^{\frac{\alpha}{2r}} \left( \lambda e^{\frac{\alpha u_{\ell_{1}}}{r}} +  \sum_{i = j}^{k} \mu_{\ell_{\ell_{i}}} e^{-\frac{\alpha u_{\ell_{i}}}{r}} \right) - \left(\lambda + \sum_{i=j}^{k}\mu_{\ell_{i}} \right)\\
&&\qquad \qquad \le  e^{\frac{\alpha}{2r}} \left( \lambda e^{\frac{\alpha u_{\ell_{1}}}{r}} +  \sum_{i = j}^{k-1} \mu_{\ell_{i}} + \mu_{\ell_{k}} e^{-\frac{\alpha}{\sqrt{k}}} \right) - \left(\lambda + \sum_{i=j}^{k}\mu_{\ell_{i}} \right)\\
&& \qquad \qquad = \lambda\left(e^{\frac{\alpha}{2r} + \delta} - 1 \right)  - \mu_{\ell_{k}}\left(1 - e^{-\frac{\alpha}{\sqrt{k}}} \right).
\end{eqnarray*}
Then, for small $\alpha$ satisfying $0 < \alpha < \sqrt{k} \log \rho^{-1}$,  there exist sufficiently small $\delta$ and large $\beta$ such that 
\begin{eqnarray}
\label{eqn:case_under_delta}
d_{1} \equiv \lambda\left(e^{\frac{\alpha}{2\sqrt{\beta}}+ \delta} - 1\right) - \mu_{\ell_{k}}(1 - e^{-\frac{\alpha}{\sqrt{k}}}) < 0.
\end{eqnarray}

We next consider the case $\frac{\alpha u_{\ell_{1}}}{r} \ge \delta$. Then, it implies that $u_{1} > 0$. 
From \eqn{Lyapunov condition1} and \eqn{inequality u 1}, 
\begin{eqnarray*}
&&e^{\frac{\alpha}{2r}} \left( \lambda e^{\frac{\alpha u_{\ell_{1}}}{r}} +  \sum_{i = j}^{k} \mu_{\ell_{\ell_{i}}} e^{-\frac{\alpha u_{\ell_{i}}}{r}} \right) - \left(\lambda + \sum_{i=j}^{k}\mu_{\ell_{i}} \right)\\
&& \qquad \qquad = (1 + e^{\frac{\alpha}{2r}} -1) \left( \lambda e^{\frac{\alpha u_{\ell_{1}}}{r}} +  \sum_{i = 1}^{k} \mu_{\ell_{\ell_{i}}} e^{-\frac{\alpha u_{\ell_{i}}}{r}} \right) - \left(\lambda + \sum_{i=1}^{k}\mu_{\ell_{i}} \right)\\
&& \qquad \qquad \le (1 + e^{\frac{\alpha}{2r}} -1) \left( \lambda e^{\frac{\alpha u_{\ell_{1}}}{r}} +  \sum_{i = 1}^{k} \mu_{\ell_{\ell_{i}}} e^{-\frac{\alpha u_{\ell_{1}}}{r}} \right) - \left(\lambda + \sum_{i=1}^{k}\mu_{\ell_{i}} \right)\\
&& \qquad \qquad = \left(e^{\frac{\alpha u_{\ell_{1}}}{r}} - 1 \right)\left(\lambda - \sum_{i=1}^{k} \mu_{\ell_{i}} e^{- \frac{\alpha u_{\ell_{1}}}{r}}\right) + (e^{\frac{\alpha}{2r}} -1)\left( \lambda e^{\frac{\alpha u_{\ell_{1}}}{r}} +  \sum_{i = 1}^{k} \mu_{\ell_{\ell_{i}}} e^{-\frac{\alpha u_{\ell_{1}}}{r}} \right)\\
&& \qquad \qquad \le \left(e^{\frac{\alpha u_{\ell_{1}}}{r}} - 1 \right)\left(\lambda - \sum_{i=1}^{k} \mu_{\ell_{i}} e^{- \frac{\alpha u_{\ell_{1}}}{r}}\right) + (e^{\frac{\alpha}{2r}} -1)\left( \lambda e^{\frac{\alpha}{\sqrt{k}}} +  \sum_{i = 1}^{k} \mu_{\ell_{\ell_{i}}} \right)\\
&& \qquad \qquad \le \left(e^{\delta} - 1 \right)\left(\lambda - \sum_{i=1}^{k} \mu_{\ell_{i}} e^{- \frac{\alpha}{\sqrt{k}}}\right) + (e^{\frac{\alpha}{2r}} -1)\left( \lambda e^{\frac{\alpha}{\sqrt{k}}} +  \sum_{i = 1}^{k} \mu_{\ell_{\ell_{i}}} \right),
\end{eqnarray*}
where the last inequality is given by $\frac{\alpha u_{1}}{r} \ge \delta$ and 
\begin{eqnarray*}
\lambda - \sum_{i=1}^{k} \mu_{\ell_{i}} e^{- \frac{\alpha u_{\ell_{1}}}{r}} \le  \lambda - \sum_{i=1}^{k} \mu_{\ell_{i}} e^{- \frac{\alpha}{\sqrt{k}}}<0,
\end{eqnarray*}
for $0 < \alpha < \sqrt{k} \log \rho^{-1}$.
Thus, for fixed $\alpha$ and $\delta$, we obtain \eqn{Lyapunov sufficient} for sufficiently large $\beta > 0$ satisfying 
\begin{eqnarray}
\label{eqn:case_over_delta}
 d_{2} \equiv \left(e^{\frac{\alpha}{2\sqrt{\beta}}} -1 \right)\left( \lambda e^{\frac{\alpha}{\sqrt{k}}} -  \sum_{i = 1}^{k} \mu_{\ell_{\ell_{i}}} \right) + \left(e^{\delta} - 1\right)\left(\lambda - \sum_{i=1}^{k} \mu_{\ell_{i}} e^{- \frac{\alpha}{\sqrt{k}}}\right) < 0.
\end{eqnarray}
We put $c_{1} = - \max(d_{1},d_{2})$. Then, from \eqn{case_under_delta} and \eqn{case_over_delta}, we have $c_{1} > 0$ and \eqn{Lyapunov sufficient}. 
This completes the proof since $\varphi(0,\theta \vc{1}) \le \mathbb{E}(e^{\theta  \sum_{i=1}^{k} L_{i}})$ for any $\theta \ge 0$.

\section{Proof of \lem{stationary inequality}}
\setnewcounter
\label{app:stationary inequality}

We prove \eqn{stationary inequality}. 
For this, we apply truncation argument for the moment generating functions $\varphi_{0U}$ and $\varphi_{+U}$. For each $n=1,2,\ldots,$ let 
\begin{eqnarray*}
g_{n}(x) = \min(x,n), \quad x \in \dd{R}.
\end{eqnarray*}
Then, for any $x,y \in \dd{R}$, it is easy to see that
\begin{eqnarray}
\label{eqn:min gn}
g_{n}(x+y) \le g_{n}(x) + \left\{
\begin{array}{ll}
y & x \le n,\\
0 & x > n.
\end{array}
\right.
\end{eqnarray}
The moment generating function $E(e^{g_{n}(\vc{\theta}\vc{Z})})$ is finite for each $n$ and any $\vc{\theta} \in \dd{R}^{k+1}$. So, we have, by the stationary equation \eqn{stationary 1},  
\begin{eqnarray*}
\mathbb{E}(e^{g_{n}(\vc{\theta}\vc{Z})}) = \sum_{U \in {\sr K} }\Big( \mathbb{E}\left(e^{g_{n}(\vc{\theta}\vc{Z} +\vc{\theta} \vc{X}^{(0U)})}1(\vc{Z} \in {\sr S}_{0U})\right) + \mathbb{E}\left(e^{g_{n}(\vc{\theta}\vc{Z} +\vc{\theta} \vc{X}^{(+U)})}1(\vc{Z} \in {\sr S}_{+U})\right) \Big).
\end{eqnarray*}
Since $\vc{X}^{(+U)}$ is independent for $\vc{Z}$, from \eqn{min gn}, we have, for any $U \in {\sr K}$,
\begin{eqnarray*}
\mathbb{E}\left(e^{g_{n}(\vc{\theta}\vc{Z} +\vc{\theta} \vc{X}^{(+U)})}1(\vc{Z} \in {\sr S}_{+U})\right) &\le& 
\mathbb{E}\left(e^{g_{n}(\vc{\theta}\vc{Z})+\vc{\theta} \vc{X}^{(+U)}}1(\vc{Z} \in {\sr S}_{+U}, \vc{\theta}\vc{Z} \le n)\right) \\
&&\quad + \mathbb{E} \left(e^{g_{n}(\vc{\theta}\vc{Z})}1(\vc{Z} \in {\sr S}_{+U}, \vc{\theta}\vc{Z} > n)\right)\\
&=& \gamma_{+U}(\vc{\theta})\mathbb{E}\left(e^{g_{n}(\vc{\theta}\vc{Z})}1(\vc{Z} \in {\sr S}_{+U}, \vc{\theta}\vc{Z} \le n )\right)\\ 
&&\quad  + \mathbb{E}\left(e^{g_{n}(\vc{\theta}\vc{Z})}1(\vc{Z} \in {\sr S}_{+U}, \vc{\theta}\vc{Z} > n)\right).
\end{eqnarray*}
We have similar result for $\vc{X}^{(0U)}$. By the decomposition of $\mathbb{E}(e^{g_{n}(\vc{\theta}\vc{Z})})$,
\begin{eqnarray*}
&&\sum_{U \in {\sr K}}(1 - \gamma_{+U}(\vc{\theta}))\mathbb{E}\left(e^{g_{n}(\vc{\theta}\vc{Z})}1(\vc{Z} \in {\sr S}_{+U}, \vc{\theta}\vc{Z} \le n)\right) \\
 &&\qquad \leq \sum_{U \in {\sr K}} (\gamma_{0U}(\vc{\theta}) - 1)\mathbb{E}\left(e^{g_{n}(\vc{\theta}\vc{Z})}1(\vc{Z} \in {\sr S}_{0U}, \vc{\theta}\vc{Z} \le n)\right).
\end{eqnarray*}
Hence, we obtain \eqn{stationary inequality} as $n \to \infty$ by the monotone convergence theorem.
We complete the proof since we can use a similar argument to \eqn{stationary inequality2}.

\section{Proof of \cor{JSQ rough marginal}}
\setnewcounter
\label{app:JSQ rough marginal}

As we already noted, we only need to prove that
\begin{eqnarray}
\label{eqn:rough upper bound}
  \limsup_{n \to \infty} \frac 1n \log \dd{P}( M = n ) \le \log \rho^{k}.
\end{eqnarray}
  From \lem{domain D}, we have, for $\eta_{0} < \log \rho^{-k}$ and $0 \le \eta_{1} < \frac 1{k-1} \log \rho^{-k}$,
\begin{eqnarray*}
  e^{\eta_{0}n} \dd{P}(M = n) \le \dd{E} (e^{\eta_{0} M}) \le \varphi(\eta_{0}, \eta_{1} \vc{1}) < \infty.
\end{eqnarray*}
Hence,
\begin{eqnarray*}
  \eta_{0}n + \log \dd{P}(M = n) \le \log \varphi(\eta_{0}, \eta_{1} \vc{1}),
\end{eqnarray*}
 which implies that
\begin{eqnarray*}
  \limsup_{n \to \infty} \frac 1n \log \dd{P}( M = n ) \le - \eta_{0}.
\end{eqnarray*}
  Thus, letting $\eta_{0} \uparrow \log \rho^{-k}$ yields \eqn{rough upper bound}.

\end{document}